\DeclareFontFamily{OT1}{rsfs}{}
\DeclareFontShape{OT1}{rsfs}{n}{it}{<-> rsfs10}{}
\DeclareMathAlphabet{\mathscr}{OT1}{rsfs}{n}{it}
\title[A discrete mean value]{A discrete mean value of the 
derivative of the Riemann zeta function}
\author{Nathan Ng}
\newcommand{\myref}[1]{(\ref{#1})}
\newtheorem{thm}{Theorem}[section]
\newtheorem{lem}[thm]{Lemma}
\newtheorem{prop}[thm]{Proposition}
\newcommand{\Log}{ \log(\mbox{$\frac{T}{2 \pi}$})}
\newcommand{\Lom}{\mbox{$\frac{\log M}{\log \log M}$}}
\newcommand{\xinf}{{||{\bf x}||_{\infty}}}
\newcommand{\yone}{{||{\bf y}||_1}}
\newcommand{\chiq}{ \sideset{}{^*}\sum_{\psi \, \mathrm{mod} \, q}}
\newcommand{\Lo}{{\mathscr{L}}}
\begin{document}
\maketitle
{\def\thefootnote{}
\footnote{\today. \ {\it Mathematics Subject Classification (2000)}.
11M26. \\
This research was funded in part by NSERC and NSF FRG grant DMS 0244660. 
}}
\begin{abstract}
\noindent
In this article we compute a discrete mean value of the derivative of the
Riemann zeta function.  This mean value will be important for several 
applications concerning the size of $\zeta'(\rho)$ where $\zeta(s)$ 
is the Riemann zeta function and $\rho$ is a non-trivial zero of the 
Riemann zeta function. 
\end{abstract}
\section{Introduction}

In this article we compute a discrete mean value of the Riemann
zeta function, $\zeta(s)$.  Throughout, $\rho=\beta+i\gamma$ 
will denote a non-trivial zero of the Riemann zeta function and
$T$ will be a large parameter.  Moreover, 
we define the Dirichlet polynomials
\[
    X(s) = \sum_{n \le M} \frac{x_n}{n^{s}} \ \mathrm{and} \
    Y(s) = \sum_{n \le M} \frac{y_n}{n^{s}}
\]
where $x_n$ and $y_n$ are arbitrary real sequences and $M = T^{\theta}$
with $0 < \theta < 1/2$.  We shall evaluate 
the following mean value:
\begin{equation}
  S = \sum_{0 < \gamma < T} \zeta'(\rho) X(\rho) Y(1-\rho)   \ . 
\end{equation} 
However, our main purpose for evaluating $S$
is to employ it for an application concerning  
large values of $\zeta'(\rho)$.
In an accompanying paper \cite{Ng3} we prove the following results:
\begin{thm} \label{thm3}
Assume the Riemann Hypothesis. 
For each $A > 0$,  we have 
\[
       |\zeta'(\rho)| \gg (\log |\gamma|)^{A} 
\]
for infinitely many $\rho= \frac{1}{2}+i\gamma$. 
\end{thm}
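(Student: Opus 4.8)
The plan is to derive Theorem~\ref{thm3} from the evaluation of $S$ by a resonance argument of Cauchy--Schwarz type, in the spirit of the classical $\Omega$-results of Montgomery and of Balasubramanian--Ramachandra for $\zeta(\tfrac12+it)$, but carried out over the zeros rather than over the critical line. Suppose, for contradiction, that for some fixed $A>0$ one has $|\zeta'(\rho)| = o\big((\log|\gamma|)^A\big)$ as $\gamma\to\infty$; then, given $\varepsilon>0$, there is $\gamma_0=\gamma_0(\varepsilon)$ with $|\zeta'(\rho)|\le \varepsilon(\log T)^A$ whenever $\gamma_0<\gamma\le T$. Peeling off the bounded contribution of the finitely many zeros with $\gamma\le\gamma_0$ and applying Cauchy--Schwarz,
\[
 |S| \;\le\; O_\varepsilon(1) \;+\; \varepsilon(\log T)^A\Big(\sum_{0<\gamma<T}|X(\rho)|^2\Big)^{1/2}\Big(\sum_{0<\gamma<T}|Y(1-\rho)|^2\Big)^{1/2}.
\]
The goal is to choose the coefficient sequences $(x_n)$ and $(y_n)$, of length $M=T^{\theta}$, so that the main term of $S$ furnished by the theorem of this paper exceeds the right-hand side for $\varepsilon$ fixed but small and $T$ large, yielding a contradiction; since the zeros contributing to $S$ all have $0<\gamma<T$ and distinct $T$ produce distinct zeros, letting $T\to\infty$ then gives $|\zeta'(\rho)|\gg(\log|\gamma|)^A$ for infinitely many $\rho$.

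First I would dispose of the two ``diagonal'' mean values on the right. Under the Riemann Hypothesis $\rho=\tfrac12+i\gamma$, so $\sum_{0<\gamma\le T}|X(\rho)|^2$ is a discrete second moment of a Dirichlet polynomial over the zeros, and likewise for $Y$ via $|Y(1-\rho)|=|Y(\tfrac12-i\gamma)|=|Y(\rho)|$. Such mean values are evaluated by Gonek's explicit-formula method (they are also a degenerate case of the argument carried out in this paper): the main term is $\frac{T}{2\pi}\,\Log\sum_{n\le M}\frac{x_n^2}{n}$ together with lower-order ``prime'' terms, of comparable or smaller size for the sequences we shall use. Hence the Cauchy--Schwarz bound becomes
\[
 |S|\;\ll\; O_\varepsilon(1)\;+\;\varepsilon(\log T)^{A+1}\,T\Big(\sum_{n\le M}\tfrac{x_n^2}{n}\Big)^{1/2}\Big(\sum_{n\le M}\tfrac{y_n^2}{n}\Big)^{1/2},
\]
and the problem is reduced to exhibiting, for every $A$, sequences with
\[
 |S_{\mathrm{main}}(x,y)| \;\gg_A\; (\log T)^{A+2}\; T \Big(\sum_{n\le M}\tfrac{x_n^2}{n}\Big)^{1/2}\Big(\sum_{n\le M}\tfrac{y_n^2}{n}\Big)^{1/2}.
\]

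The key step, and the one where essentially all the work lies, is constructing these sequences. I would take $x_n=y_n=r_n$ an $A$-dependent resonator supported on $n\le M$ --- a multiplicative sequence of the shape $r_{p^k}=\rho_p^{\,k}$ with the $\rho_p$ chosen, as in Soundararajan's resonance method, so that the natural weighted norm $\sum_{n\le M}r_n^2/n$ stays small while the off-diagonal part of the main term of $S$ --- which the formula of this paper displays as a sum over $h,k\le M$ built from GCD-type quantities weighted by polynomials of degree $\le 2$ in $\Log$, $\log h$, $\log k$ --- is forced to be large. Carrying out the Euler-product optimisation should show that the ``spectral ratio'' $S_{\mathrm{main}}(r,r)\big/\big(T\sum_{n\le M}r_n^2/n\big)$ can be made to grow faster than any fixed power of $\log T$ (plausibly of order $\exp\!\big(c\sqrt{\log T/\log\log T}\,\big)$ for a well-chosen resonator), which is far more than is needed; the restriction $0<\theta<1/2$ is precisely what guarantees that the error term in the evaluation of $S$ remains of smaller order than this main term.

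The main obstacle I anticipate is the interplay between these last two ingredients: one must tune the resonator so that the gain in $S_{\mathrm{main}}$ genuinely dominates the simultaneous growth of the mean squares $\sum_{0<\gamma\le T}|X(\rho)|^2$ and $\sum_{0<\gamma\le T}|Y(1-\rho)|^2$ (whose own ``prime'' terms are not negligible for a long resonator), and at the same time keep the error term in this paper's formula --- proved for general sequences but with a bound depending on $M$ and on the sizes of the coefficients --- of order $o(S_{\mathrm{main}})$. Once both are established the Cauchy--Schwarz inequality above is contradicted, and Theorem~\ref{thm3} follows.
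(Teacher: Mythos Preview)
Your overall architecture --- bound $|S|$ by Cauchy--Schwarz, evaluate the two discrete mean squares $\sum_{0<\gamma<T}|X(\rho)|^2$ and $\sum_{0<\gamma<T}|Y(1-\rho)|^2$ by Gonek's method, and then force a contradiction by choosing coefficients making the main term of $S$ large --- is precisely the route the paper indicates (the proof itself is deferred to the companion article \cite{Ng3}, so there is no detailed argument here to compare against line by line). The reduction to exhibiting
\[
 |S_{\mathrm{main}}| \;\gg_A\; (\log T)^{A+2}\,T\Big(\sum_{n\le M}\frac{x_n^2}{n}\Big)^{1/2}\Big(\sum_{n\le M}\frac{y_n^2}{n}\Big)^{1/2}
\]
is the right target.

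The concrete gap is in your choice of coefficients. Under RH alone you only have access to part~(i) of Theorem~\ref{thm1}, the divisor case, and there the error is $T(\log T)^{-A'}$. The Soundararajan coefficients $f(n)$ you propose do \emph{not} satisfy the divisor hypothesis~(\ref{eq:divc}) --- indeed $\|f\|_\infty$ can be as large as $M^{1/2+\epsilon}$, and $\sum_{n\le M}f(n)^2/n$ is of order $\exp\big((0.5+o(1))\tfrac{\log M}{\log\log M}\big)$ by~(\ref{eq:f2n}). Thus the main term, the Cauchy--Schwarz upper bound, and the error you would need to absorb all grow faster than any fixed power of $\log T$, and a saving of $(\log T)^{-A'}$ cannot separate them. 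This is exactly the point of Remark~5 in the paper: the resonator coefficients force one to invoke part~(ii) and hence the large zero-free region conjecture, which is why they feed into Theorem~\ref{thm4} rather than Theorem~\ref{thm3}.

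For Theorem~\ref{thm3} you should instead stay inside the divisor class~(\ref{eq:divc})--(\ref{eq:divc2}): take, say, $x_n=y_n=\tau_k(n)$ with $k=k(A)$ fixed but large. Then Theorem~\ref{thm1}(i) applies unconditionally, the three main-term sums in~(\ref{eq:S1ass}) factor as Euler products of size a power of $\log T$ growing with $k$, and choosing $k$ large in terms of $A$ beats $(\log T)^{A+2}$ while the error $T(\log T)^{-A'}$ (with $A'$ chosen after $k$) stays negligible. Your final paragraph correctly identifies this tension as the crux; the resolution is not to tune the full resonator harder but to abandon it in favour of divisor-bounded coefficients whose gain, though only polylogarithmic, is all that Theorem~\ref{thm3} demands.
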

In order to strengthen this result we will require an additional assumption 
concerning the location of zeros of Dirichlet $L$-functions.  \\

\noindent {\bf Large zero-free region conjecture.}  \\
There exists a constant $c_0>0$ sufficiently large such that
for each $q \ge 1$ and each character $\chi$ modulo $q$ the Dirichlet $L$-function
$L(s,\chi)$ does not vanish in the region 
\[
     \sigma \ge 1 - \frac{c_0}{\log \log (q (|t|+4))}
\]
where $s= \sigma + it$.  \\

The value of $c_0$ required may be calculated and $c_0=100$ suffices but is not minimal.
We note that this large zero-free region conjecture is significantly weaker than the Generalized Riemann Hypothesis which asserts that all non-trivial zeros of each Dirichlet $L$-function lie on the one-half line.  On the other hand, this 
zero-free region is still much larger than what has currently been proven. 
For example, this conjecture rules out the existence of Siegel zeros. \\

\begin{thm} \label{thm4}
Assume the Riemann hypothesis and the large zero-free conjecture for 
Dirichlet $L$-functions.   There exists a constant $c_2 >0$ such that
\[
       |\zeta'(\rho)| \gg  \exp 
       \left(
       c_2 \sqrt{\frac{\log|\gamma|}{\log \log |\gamma|}}
       \right) 
\]
for infinitely many $\rho = \frac{1}{2} + i\gamma$. 
\end{thm}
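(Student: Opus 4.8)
The plan is to deduce Theorem~\ref{thm4} from the asymptotic evaluation of $S$ established in this paper, by the resonance method of Soundararajan. Assume the Riemann Hypothesis, so that every non-trivial zero has the form $\rho=\tfrac12+i\gamma$; if $x_n$ and $y_n$ are real then $Y(1-\rho)=\sum_{n\le M}y_n n^{-1/2+i\gamma}$, and by the triangle inequality followed by Cauchy--Schwarz,
\[
  |S|\ \le\ \Big(\max_{0<\gamma<T}|\zeta'(\rho)|\Big)\sum_{0<\gamma<T}|X(\rho)|\,|Y(1-\rho)|
  \ \le\ \Big(\max_{0<\gamma<T}|\zeta'(\rho)|\Big)\Big(\sum_{0<\gamma<T}|X(\rho)|^2\Big)^{1/2}\Big(\sum_{0<\gamma<T}|Y(1-\rho)|^2\Big)^{1/2}.
\]
Thus $\max_{0<\gamma<T}|\zeta'(\rho)|$ is at least the main term of $S$ divided by the product of the two discrete second moments, and it suffices to choose $x_n,y_n$ making this quotient $\gg\exp\!\big(c_2\sqrt{\log T/\log\log T}\,\big)$. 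Running the argument with the sum restricted to $T<\gamma\le 2T$ (so that $\log\gamma\asymp\log T$) and letting $T\to\infty$ along a sequence then produces infinitely many zeros $\rho=\tfrac12+i\gamma$ with $|\zeta'(\rho)|\gg\exp\!\big(c_2\sqrt{\log|\gamma|/\log\log|\gamma|}\,\big)$.

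For the denominator we need an upper bound for $\sum_{0<\gamma<T}|X(\rho)|^2$ (and the analogous sum for $Y$). On RH, the explicit formula together with the Landau--Gonek estimate for $\sum_{0<\gamma<T}(h/k)^{i\gamma}$ gives a diagonal main term of size $\asymp\tfrac{T}{2\pi}\Log\sum_{n\le M}x_n^2/n$, while the off-diagonal contribution will be of strictly smaller order for the resonators used below; hence $\sum_{0<\gamma<T}|X(\rho)|^2\ll\tfrac{T}{2\pi}\Log\sum_{n\le M}x_n^2/n$. By the main result of this paper, the numerator --- the main term of $S$ --- is $\tfrac{T}{2\pi}$ times a bilinear form $\sum_{h,k\le M}c_{h,k}\,x_h y_k$ whose coefficients $c_{h,k}$ carry the arithmetic of $\zeta'(\rho)$: gcd weights together with a polynomial in logarithmic quantities such as $\log\frac{T(h,k)^2}{hk}$, which remain positive on the range $h,k\le M$ because $\t<1/2$. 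So the whole problem reduces to the purely multiplicative optimisation of
\[
  \frac{\sum_{h,k\le M}c_{h,k}\,x_h y_k}{\big(\sum_{n\le M}x_n^2/n\big)^{1/2}\big(\sum_{n\le M}y_n^2/n\big)^{1/2}}.
\]

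The resonator is then built in the customary way: one takes $x_n$ and $y_n$ supported on squarefree integers composed of primes in a suitable range below $M=T^{\t}$, with a resonating local profile, so that the denominator $\sum_{n\le M}x_n^2/n=\prod_p(1+x_p^2/p)$ and the bilinear form both factor as Euler products. Optimising the length of the prime range and the local profile against these products produces a gain of $\exp\!\big(c_2\sqrt{\log M/\log\log M}\,\big)$, which equals $\exp\!\big(c_2\sqrt{\log T/\log\log T}\,\big)$ since $\log M\asymp\log T$; this is precisely the bound claimed.

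The main obstacle is the error term in $S$. Unlike in the continuous extreme-value problem for $|\zeta(\tfrac12+it)|$, the resonator here inflates $S$ above its trivial size only by the slender factor $\exp(c_2\sqrt{\log T/\log\log T})$, not by a power of $T$, so the error term in the evaluation of $S$, and the off-diagonal terms in the second moments, must be beaten by essentially this same margin. This is exactly where the large zero-free region conjecture for Dirichlet $L$-functions enters: it sharpens the Landau-type formulas and the associated prime sums --- equivalently, estimates for $\psi(x;q,a)$ with good uniformity in $q$, obtained after reorganising the fractions $h/k$ according to their reduced form --- so that their savings take the shape $\exp(-c\sqrt{\log x/\log\log x})$, which is just enough for a resonator of length $T^{\t}$ to be admissible. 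Securing this uniformity, and checking that the chosen resonator is spread out enough that no off-diagonal piece rivals the diagonal, is the technical heart of the argument.
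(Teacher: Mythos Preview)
Your outline is the right one and is precisely the strategy the paper has in mind: feed the resonator coefficients $x_n=y_n=f(n)$ of~(\ref{eq:resc}) into Theorem~\ref{thm1}(ii), bound the discrete second moments $\sum_{0<\gamma<T}|X(\rho)|^2$ via Landau--Gonek, and extract $\max|\zeta'(\rho)|$ by Cauchy--Schwarz. Note, however, that the present paper does not itself prove Theorem~\ref{thm4}; it only supplies the asymptotic for $S$ and defers the deduction to the companion paper~\cite{Ng3}. So strictly speaking there is no proof here to compare against, but your plan matches the advertised application.

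There is one genuine misconception in your final paragraph that you should correct, because it is exactly the point stressed in Remark~5. You write that the large zero-free region conjecture yields savings ``of the shape $\exp(-c\sqrt{\log x/\log\log x})$, which is just enough.'' That is wrong on both counts. Under the conjecture the error in Theorem~\ref{thm1}(ii) is $T\exp(-c_1\log T/\log\log T)$, i.e.\ a saving of $\exp(-c_1\log T/\log\log T)$, not its square root. More importantly, a square-root saving would \emph{not} be enough: by~(\ref{eq:f2n}) one has $\sum_{n\le M}f(n)^2/n\asymp\exp\big((0.5+o(1))\tfrac{\log M}{\log\log M}\big)$, so both the numerator $S$ and the denominator $\sum|X(\rho)|^2$ live at scale $T\cdot\exp(c'\log T/\log\log T)$. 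Any error term that only saves $\exp(-c\sqrt{\log T/\log\log T})$ would swamp the main term entirely, let alone the delicate gain of $\exp(c_2\sqrt{\log T/\log\log T})$ coming from the resonator. The r\^ole of the conjecture is to beat the factor $\exp(c'\log T/\log\log T)$ arising from the size of the resonator in mean square, not to match the target lower bound. The same remark applies to your treatment of the off-diagonal terms in $\sum|X(\rho)|^2$: you must check that those too are suppressed by $\exp(-c\log T/\log\log T)$ relative to the diagonal, not merely by a power of $\log T$.
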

 The methods employed to prove Theorems \ref{thm3} and 
\ref{thm4} are based on Soundararajan's \cite{So} resonance method.  This 
method has proven to be successful in determining extreme values of $L$-functions and character sums.  
In the article \cite{Ng3} we also succeed in exhibiting small values of 
$|\zeta'(\rho)|$ too. \\

We also note that an asymptotic evaluation of $S$ has other important applications.  Soundararajan has informed me that he can prove under the
assumption of the Riemann hypothesis that 
\[
    \sum_{0 < \gamma < T} |\zeta'(\rho)|^{2k} 
    \gg_{k} T (\log  T)^{(k+1)^2} \ . 
\]
This proof requires our formula for $S$ and follows the lower bound
method of Rudnick and Soundararajan \cite{RS1}, \cite{RS2}. We observe
that this is stronger than Theorem \ref{thm3}. \\

Our evaluation of $S$ will be split in two cases depending on the properties
of the coefficients $x_n$ and $y_n$.  The cases we shall consider are:  \\

\noindent {\bf Case 1.} {\it  The divisor case.} The coefficients shall satisfy the
bounds
\begin{equation}
   |x_n|, |y_n| \le \tau_{r}(n) (\log T)^{C} 
   \label{eq:divc}
\end{equation}
for $r \in \mathbb{N}$ and $C \ge 0$
where $\tau_{r}(n)$ is the $r$-th divisor function.
We shall also assume
\begin{equation}
   |x_{mn} | \ll |x_m| |x_n| \ \mathrm{and} \
   |y_{mn}| \ll |y_m| |y_n|  \ . 
   \label{eq:divc2}
\end{equation}
\noindent {\bf Case 2.} {\it The resonator case.}  In this case we will
take $x_n=y_n=f(n)$ where $f$ is a multiplicative
function supported on the squarefree integers.  For a prime $p$ we 
define 
\begin{equation}
    f(p) = \left\{ \begin{array}{ll}
                  \frac{L}{\log p} & \mbox{if $L^2 \le p \le 
                  \exp((\log L)^2)$} \\
                  0 & \mbox{else} \\
                  \end{array} \right. 
                  \label{eq:resc}
\end{equation}
where $L = \sqrt{\log M \log \log M}$.  \\

The resonator coefficients have recently been employed by Soundararajan \cite{So}
and they arose in a certain optimization problem related to finding extreme values of $\zeta(1/2+it)$. 
We shall refer throughout this article to case one as the divisor case and 
case two as the resonator case.  Our evaluation of $S$ in the divisor case will be
unconditional whereas the evaluation of $S$ in the resonator case will depend
on the (as yet) unproven large zero-free region conjecture.  \\

We now state our result for $S$. 
We let $c_j$ for $j=1,2,3, \ldots$ denote positive constants. 
\begin{thm} \label{thm1}
$(i)$  If $x_n,y_n$ satisfy~(\ref{eq:divc}),~(\ref{eq:divc2}) then
\begin{equation}
\begin{split}
   S & = \frac{T}{2 \pi}
  \sum_{nu \le M} \frac{x_{u}y_{nu}r_{0}(n)}{nu}
   -   \frac{T}{4 \pi} \sum_{nu \le M} \frac{y_{u}x_{un}}{nu} 
    R_2 \left( \log(\mbox{$\frac{T}{2 \pi n}$}) \right)  \\
  & +\frac{T}{2 \pi} \sum_{{\begin{substack}{a,b \le M
         \\ (a,b)=1}\end{substack}}}
   \frac{r_1(a,b)}{ab}
   \sum_{g \le \min(\frac{M}{a},\frac{M}{b})} \frac{y_{ag}x_{bg}}{g}
   + \tilde{\mathcal{E}}
   \label{eq:S1ass}
\end{split}
\end{equation}
where for $0 < \theta < 1/2$ we have for any $A' > 0$
\[
     \tilde{ \mathcal{E}} \ll_{A'} T (\log T)^{-A'} +
   T^{\frac{3}{4}+\frac{\theta}{2}+\epsilon}  \ . 
\]
The other quantities are defined as follows:
\[
   r_0 (n) =  P_2(\Log)-
    2P_1(\Log)(\log n)+  (\Lambda*\log)(n)  \ , 
\]
\[
    r_1(a,b) = \mbox{$\frac{1}{2}$} \Lambda_2(a)
   - R_1 \left(  \log(\mbox{$\frac{T}{2 \pi b}$})   \right) \Lambda(a)
   - \tilde{R}_1 \left( 
   \log(\mbox{$\frac{T}{2 \pi b}$})
   \right)  \alpha_1(a) - \alpha_2(a)  \ , 
\]
$P_1,P_2,R_1,\tilde{R}_1,R_2$
are monic polynomials of degrees $1,2,1,1,2$ respectively and $\alpha_1,\alpha_2$ 
are arithmetic functions.  In fact, $\alpha_1$ is supported on prime powers,
$\alpha_2$ is supported on integers $n$ such that $\omega(n) \le 2$.
More precisely,   $\alpha_1(p^{\alpha}) = \frac{\log p}{p-1}$, 
   $ \alpha_2(p^{\alpha})   =
  \mbox{$  - \frac{(\alpha+1) (\log p)^2}{p-1}
    + \frac{D\log p}{p-1} - \frac{\log p}{(p-1)^2}$}$
    for some $D \in \mathbb{R}$, and 
 $\alpha_2 (p^{\alpha} q^{\beta}) = \mbox{$- (\log p)(\log q) (
   \frac{1}{p-1} + \frac{1}{q-1})$}$ for $\alpha,\beta \in \mathbb{N}$.   \\

\noindent  $(ii)$ Assume the large zero-free region conjecture. 
If $x_n=y_n=f(n)$ where $f$ is defined by~(\ref{eq:resc}) then
there exists a $c_1 > 0$ such that~(\ref{eq:S1ass})
remains true with  
\[
    \tilde{\mathcal{E}} \ll T \exp \left(
    -  \mbox{$\frac{c_1 \log T}{\log \log T}$}
    \right) + 
     T^{\frac{3}{4}+\frac{3\theta}{2}+\epsilon} 
\] 
for $0 < \theta < 1/6$. 
\end{thm}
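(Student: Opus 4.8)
The plan is to represent $S$ as a contour integral and to evaluate it by the method of Gonek and Conrey--Ghosh. Since $\zeta'(\rho)$ vanishes at every multiple zero, the sum over distinct zeros equals the sum counted with multiplicity, so by the residue theorem
\[
   S=\frac{1}{2\pi i}\int_{\mathcal C}\frac{\zeta'}{\zeta}(s)\,\zeta'(s)\,X(s)Y(1-s)\,ds ,
\]
where $\mathcal C$ is the positively oriented rectangle with vertices $c+i,\ c+iT,\ 1-c+iT,\ 1-c+i$, with $c=1+\tfrac1{\log T}$ and $T$ chosen so that $|T-\g|\gg\tfrac1{\log T}$ for every ordinate $\g$. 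The pole of $\zeta$ at $s=1$ lies below $\mathcal C$ and is not encountered, and the bottom edge contributes a negligible amount. On the right edge $\Re s=c>1$ everything converges absolutely; writing $\tfrac{\zeta'}{\zeta}(s)\zeta'(s)=\sum_{n}(\Lambda*\log)(n)n^{-s}$ and multiplying out the Dirichlet series, the diagonal terms produce exactly $\tfrac{T}{2\pi}\sum_{nu\le M}\tfrac{x_{u}y_{nu}(\Lambda*\log)(n)}{nu}$, the $(\Lambda*\log)(n)$ constituent of the first sum in \myref{eq:S1ass}, while the off-diagonal terms are controlled by the standard $|\log(\cdot)|^{-1}$ bound (using \myref{eq:divc} and \myref{eq:divc2} in the divisor case, so that $\sum_{n\le M}|x_{n}|^{2}n^{-1}\ll(\log T)^{O(1)}$) and contribute only to $\widetilde{\mathcal E}$.

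The substance of the proof is the left edge. On $\Re s=1-c$ the functional equation $\zeta(s)=\chi(s)\zeta(1-s)$ gives, after simplification,
\[
   \frac{\zeta'}{\zeta}(s)\zeta'(s)=\chi(s)\Bigl[\Bigl(\tfrac{\chi'}{\chi}(s)\Bigr)^{2}\zeta(1-s)-2\,\tfrac{\chi'}{\chi}(s)\,\zeta'(1-s)+\tfrac{(\zeta')^{2}}{\zeta}(1-s)\Bigr],
\]
and since $\Re(1-s)=c>1$ each of $\zeta(1-s)$, $\zeta'(1-s)$, $\tfrac{(\zeta')^{2}}{\zeta}(1-s)$ is an absolutely convergent Dirichlet series. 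Substituting these together with $X(s)Y(1-s)$ reduces matters to integrals of the form $\tfrac1{2\pi i}\int_{(1-c)}\chi(s)\bigl(\tfrac{\chi'}{\chi}(s)\bigr)^{j}(\ell/m)^{s}\,ds$ for $j=0,1,2$ and integers $m\le M$, $\ell\ge1$, taken over $\Im s\in[1,T]$. A Gonek-type lemma --- stationary phase applied to $\chi(\tfrac12+it)\sim(t/2\pi)^{-it}e^{i(t+\pi/4)}$, which has a single saddle point at $t_{0}=2\pi\ell/m$, together with $\tfrac{\chi'}{\chi}(s)\sim-\log(t/2\pi)$ --- evaluates each such integral, and the total contribution takes the shape $\sum_{m\le M}\tfrac{x_{m}}{m}\sum_{\ell\le mT/2\pi}(h_{j}*y)(\ell)\,e(\ell/m)\,Q_{j}(\log(\ell/m))$, where $h_{0}=\Lambda*\log$, $h_{1}$ is a multiple of $\log$, $h_{2}=1$, and $Q_{j}$ is a polynomial of degree $j$. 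One now splits each inner sum according to the common divisor of $\ell$ and $m$. The part in which the surviving additive character is trivial, after summing $\sum_{\ell}Q_{j}(\log\ell)$ over a long interval, assembles into the polynomial pieces $P_{1}(\Log),P_{2}(\Log)$ of $r_{0}$ and into the term $-\tfrac{T}{4\pi}\sum_{nu\le M}\tfrac{y_{u}x_{un}}{nu}R_{2}\bigl(\log\tfrac{T}{2\pi n}\bigr)$. The part with a non-trivial character is negligible \emph{except} in the contribution of the prime-power series $\tfrac{(\zeta')^{2}}{\zeta}(1-s)$ and its companions, where the twisted prime sums $\sum_{a\le X}\Lambda(a)e(a\b/q)$ have main term $\tfrac{\mu(q)}{\phi(q)}X$; reassembling these produces the double sum $\sum_{(a,b)=1}\tfrac{r_{1}(a,b)}{ab}\sum_{g}\tfrac{y_{ag}x_{bg}}{g}$, and the monic polynomials $R_{1},\tilde{R}_{1}$ and the arithmetic functions $\a_{1},\a_{2}$ (with the constant $D$) emerge from the associated Euler products together with the Laurent expansions of $\zeta$ and of $\chi'/\chi$ at $s=1$; in particular one checks $\a_{1}(p^{\a})=\tfrac{\log p}{p-1}$ and the stated formulas for $\a_{2}$. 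The genuinely off-diagonal remainder is bounded by the Siegel--Walfisz theorem.

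It remains to estimate the errors. The top edge is handled using the choice of $T$ above, the estimate $|\tfrac{\zeta'}{\zeta}(\s+iT)|\ll(\log T)^{2}$, a convexity bound for $\zeta'$ and the functional equation when $\s<\tfrac12$, together with mean-value estimates for $\zeta$ twisted by Dirichlet polynomials; this and the various off-diagonal contributions give the $T^{3/4+\t/2+\epsilon}$ term. In the divisor case \myref{eq:divc} and \myref{eq:divc2} make every arithmetic sum occurring in the error $\ll(\log T)^{O(1)}$, and the prime number theorem in arithmetic progressions supplies the $T(\log T)^{-A'}$ term; this proves $(i)$ for $0<\t<\tfrac12$. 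In the resonator case the coefficients $f(n)$ are \emph{not} dominated by $\tau_{r}(n)(\log T)^{C}$ --- for $n\approx M$ with many prime factors $f(n)$ is as large as a positive power of $T$, and $\sum_{n\le M}f(n)^{2}n^{-1}\gg\exp\!\bigl(c\,\tfrac{\log T}{\log\log T}\bigr)$ --- so the divisor estimates are useless. Instead one exploits the multiplicativity of $f$, its support on primes in $[L^{2},\exp((\log L)^{2})]$, and the large zero-free region conjecture, which bounds the twisted prime sums and the progression sums with sufficient uniformity in the modulus (in particular it rules out Siegel zeros, exactly what the Siegel--Walfisz step requires here); the coarser estimates then available involve the Dirichlet polynomial to a higher power, so one must take $0<\t<\tfrac16$ and obtains $\widetilde{\mathcal E}\ll T\exp\!\bigl(-c_{1}\,\tfrac{\log T}{\log\log T}\bigr)+T^{3/4+3\t/2+\epsilon}$, proving $(ii)$.

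I expect the left-edge analysis to be the main obstacle: making the stationary-phase evaluation of the $\chi$-weighted integrals uniform in $\ell$ and $m$, correctly extracting the five polynomials and the two arithmetic functions from the Euler-product computation, and --- above all in the resonator case --- pushing every off-diagonal and progression-sum error below the large resonator main term while relying only on the zero-free region hypothesis rather than on the Generalized Riemann Hypothesis.
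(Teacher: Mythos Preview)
Your overall architecture---contour integral, functional equation on one edge, Gonek-type stationary phase, then additive-to-multiplicative conversion of the exponential $e(\ell/m)$---is indeed the skeleton of the paper's proof, though the paper organizes the two edges slightly differently (it applies the functional equation \emph{before} setting up the contour, so that all of $r_0$, not just the $(\Lambda*\log)$-piece, comes from the right edge, and the Gonek lemma is applied only once to the single Dirichlet series $\sum a(m)m^{-s}$ with $a=\Lambda*\log*x$). Your variant is workable but heavier: you must apply a stationary-phase lemma with the extra $(\chi'/\chi)^j$ weight for $j=0,1,2$ simultaneously.

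The real gap is in the error analysis. You write that ``the genuinely off-diagonal remainder is bounded by the Siegel--Walfisz theorem'' and that ``the prime number theorem in arithmetic progressions supplies the $T(\log T)^{-A'}$ term''. This is not enough. After the additive-to-multiplicative conversion the non-principal-character contribution involves primitive characters to moduli $q$ ranging over \emph{all} $q\le M=T^{\theta}$, not merely $q\le(\log T)^{A}$. Siegel--Walfisz (or, in the resonator case, the large zero-free region conjecture) handles only the small moduli $q\le\eta$ with $\eta=(\log T)^{A}$ or $\eta=\exp(2.5\log T/\log\log T)$. For $\eta<q\le M$ one needs a genuine Bombieri--Vinogradov-type argument: Vaughan's identity to decompose the generating series $A(s,\psi,d)=\sum_m a(md)\psi(m)m^{-s}$ into bilinear pieces, followed by the multiplicative large sieve over primitive characters. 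This is where the bulk of the work lies, and it is precisely this large-sieve step (not the top edge or the simple off-diagonals) that produces the $T^{3/4+\theta/2+\epsilon}$ term in part $(i)$ and forces the stronger restriction $\theta<1/6$ together with the $T^{3/4+3\theta/2+\epsilon}$ term in part $(ii)$. The top edge in fact contributes only $MT^{1/2+\epsilon}=T^{1/2+\theta+\epsilon}$, which is smaller.

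Without the Vaughan/large-sieve machinery you cannot close the argument: the character sum over $\eta<q\le M$ is the dominant error, and no amount of zero-free-region input controls it directly. You should introduce a threshold $\eta$, treat $q\le\eta$ by the method you describe, and for $q>\eta$ write $A=(A-F)(1-LG)+(F-FLG+ALG)$ with $F,G$ suitable partial sums, then bound each piece via the mean-value form of the large sieve.
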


\noindent {\bf Remarks. 1.}  It is possible to obtain an intermediate result
to Theorems \ref{thm3} and \ref{thm4}.  In fact, one can show that if
the Riemann hypothesis is true and there are no Siegel zeros then  there
exists a $c_3>0$ such that
\[
   |\zeta'(\rho)| \gg \exp (c_3 (\log |\gamma|)^{1/4})
\]
infinitely often.   The proof of this result rests on deriving Theorem \ref{thm1} 
for the sequences $x_n=y_n=g(n)$ where $g$ is multiplicative and
supported on squarefree integers.  On primes it is defined by 
\[
         g(p) = \left\{ \begin{array}{ll}
                  \frac{c(\log M)^{1/4}\log \log M}{\log p} & \mbox{if $A \le p \le 
                  B$} \\
                  0 & \mbox{else} \\
                  \end{array} \right. 
\]
for some $c>0$. The evaluation of $S$ in this case is very similar to the two other cases worked
out in the article.  
 However, due to the length of this article we have decided not to present this case. \\

\noindent {\bf 2.}   Various mean values involving 
$\zeta'(\rho)$ have been explored in several previous articles. 
(See \cite{CGG2}, \cite{CGG3}, \cite{Fu}, \cite{Go}, \cite{Ng0}.)  
 Discrete moments of $\zeta'(\rho)$ have number theoretic applications to 
simple zeros of the zeta function \cite{CGG3} and to the 
distribution of the summatory function of the M\"{o}bius function
\cite{Ng1}.  More generally, moments of $X(\rho+\alpha)$ for a 
Dirichlet polynomial $X(s)$ and $\alpha \in \mathbb{C}$ have applications
to extreme gaps between the zeros of the zeta function (see \cite{CGG1},\cite{Ng2}). \\

\noindent {\bf 3.} The argument for this theorem is based 
on an argument of Conrey, Ghosh, and Gonek \cite{CGG3} for
evaluating the sums
\[
  S_1 = \sum_{0 < \gamma < T} \zeta'(\rho)X(\rho) \ \mathrm{and} \
  S_2 = \sum_{0 < \gamma < T} \zeta'(\rho)\zeta'(1-\rho)X(\rho)
  X(1-\rho)
\]
where $x_n = \mu(n) P \left( \frac{\log(M/n)}{\log M} \right)$
and $P$ is a polynomial.  The evaluation of $S_2$ 
requires the assumption of the generalized Lindel\"{o}f hypothesis
whereas $S_1$ may be computed unconditionally. 
Oddly, this point is never stressed in \cite{CGG3}.  In a future article, we shall evaluate the above sums with  arbitrary coefficients by the methods of this article. \\ 

\noindent {\bf 4.}  The proof of the bound for $\tilde{\mathcal{E}}$ in 
Theorem 1 is obtained by an argument which is very similar to the
proof of the Bombieri-Vinogradov theorem.  Recall that it asserts that for each $A>0$
\[
    \sum_{q \le M} 
    \max_{(a,q)=1} \max_{y \le T}
    \left|\psi(y;a,q)-\frac{y}{\phi(q)} \right|
    \ll T (\log T)^{-A} + T^{1/2}M(\log TM)^{6} \ . 
\]
Improving our value of $M$ beyond $\sqrt{T}$ in Theorem \ref{thm1} lies 
as deep as improving the Bombieri-Vinogradov theorem for $M$ larger
than $\sqrt{T}$.  \\

\noindent{\bf 5}.  In this article we must impose some conditions on the location of 
zeros of Dirichlet $L$-functions.  More precisely, we assume the large zero-free region
conjecture for Dirichlet $L$-functions in order to evaluate $S$ in the resonator case.  The central reason for applying this conjecture is that the coefficients
$f(n)$ satisfy $\sum_{n \le M} f(n)^2 \sim M \exp(\frac{c \log T}{\log \log T})$.
Now the general setup for evaluating $S_1$ in the article of \cite{CGG3} is 
to use an argument similar to proving the Bombieri-Vinogradov theorem.  
However, this type of argument provides a savings of $(\log T)^{-A}$ for any
$A$ from the main term.  As the resonator coefficients $x_n =y_n=f(n)$ become very large in mean square we will be unable to obtain an asymptotic formula with only a savings of a power of a logarithm.  The   central reason for applying the large zero free region conjecture is that it allows us to have
a savings of $\exp(-\frac{c\log T}{\log \log T})$ for a large enough $c$
which balances the large average size of the resonator coefficients. 
On the other hand, for the evaluation of $S_2$ in \cite{CGG3} the coefficients $x_n,y_n$ are bounded in size.  In that case the Generalized
Lindel\"{o}f hypothesis is invoked in order to bound a sixth integral moment 
of $L(s,\chi)$ on average on the critical line and has nothing to do with 
the size of the coefficients $x_n,y_n$ as in our case.   \\

\noindent {\bf Acknowledgements}.  I would like to thank K. Soundararajan
for suggesting this problem.  Also thanks to H. Kadiri for a useful
question. 

\section{Notation}
Throughout out this article we shall denote a series of positive constants
by $c_j$ and $C_j$ for $j=1,2, \ldots$.  We remark that some of the constants
$C_j$ will depend on the numbers $r$ and $C$ given in~(\ref{eq:divc}).  
 For $T$ large we define 
$\Lo= \log(T) $. 
We shall also consider arbitrary sequences ${\bf x}=\{ x_n \}$  supported on the interval $[1,M]$. We shall employ 
the notation 
\[
    ||{\bf x} ||_{\infty} = \max_{n \le M} |x_n| \ \mathrm{and} \
    ||{\bf x}||_{p} = ( \sum_{n \le M} |x_n|^{p} )^{1/p} \ . 
\]

We shall use Vinogradov's notation $f(x) \ll g(x)$ to mean there exists a $C>0$
such that
$
    |f(x)| \le C g(x)
$
for all $x$ sufficiently large.  We denote $f(x)=O(g(x))$ to mean the same thing. 

In addition, we will encounter a host of familiar arithmetic functions. 
Let $\omega(n)$ denote the number of distinct prime factors of $n$. 
For $r>0$ we define $\tau_{r}(n)$, the $r$-th divisor function, to be the coefficient
of $n^{-s}$ in the Dirichlet series $\zeta^{r}(s)$.  If $r=2$ we 
write $\tau(n)=\tau_2(n)$. 
Similarly $\Lambda(n)$ is the coefficient of $n^{-s}$ in the Dirichlet series of $-\zeta'(s)/\zeta(s)$. This
yields the expression
$\Lambda(n) = \sum_{d \mid n} \mu(d) \log \frac{n}{d}$.
Moreover, we have its generalization $\Lambda_{k} = \mu *\log^{k}$. 
An equivalent definition is that
$\Lambda_{k}(n)$ is the coefficient of $n^{-s}$ in the Dirichlet series $(-1)^{k}\zeta^{(k)}(s)/\zeta(s)$. Furthermore,
$\Lambda_{k}(n)=0$ is supported on those integers with at most 
$k$ prime factors. 
We also define
$
   j(n) = \prod_{p \mid n} \left(1 + 10p^{-1/2} \right) 
$.

\section{The Dirichlet polynomial coefficients $x_n$} \label{coeff}

 We record some properties
of the coefficients that will be employed throughout the article:  \\

\noindent {\bf Properties of the divisor coefficients}. \\
Let $x_n$ and $y_n$ satisfy (\ref{eq:divc}) and (\ref{eq:divc2}). We  have the standard estimates:
\begin{equation}
\begin{split}
   & ||{\bf x}||_{\infty} \ , \ ||{\bf y}||_{\infty} \ll T^{\epsilon} \ ,  \\
& || \mbox{$\frac{x_n}{n}$}||_{1}  \ , \
     || \mbox{$\frac{y_n}{n}$}||_{1}  \ , \ 
      || \mbox{$\frac{x_n^2}{n}$}||_{1}  \ , \
    || \mbox{$ \frac{y_n^2}{n}$}||_{1}   \ , \
     || \mbox{$ \frac{x_n ( \tau_{k}*y)(n)}{n}$}|| \ll \Lo^{C'} 
    \label{eq:divbds}
\end{split}
\end{equation}
for some $C'>0$ and $k \in \mathbb{N}$.  We remark that the above
bounds remain true when the above sequences are multiplied by $j(n)$. \\

\noindent {\bf Properties of the resonator coefficients}. \\
Let $f$ be defined by (\ref{eq:resc}).  We have the following estimates:
\begin{equation}
\begin{split}
  f(mn) & \ll f(m) f(n) \ , \\
 ||f||_{\infty} & \le M^{1/2+\epsilon} \ \mathrm{for} \ M \
 \mathrm{sufficiently \ large}  \ ,  \label{eq:finf} \\
  ||f||_{1} & \ll  M\exp \left((1+o(1))
   \sqrt{\Lom} \right) \ , \\
\end{split}
\end{equation}
\begin{equation}
\begin{split}
   ||f^2||_1
   & \ll  M \exp \left((0.5+o(1)) \Lom \right)  \ ,  \\
    \sum_{n \le M} \frac{f(n)}{n} & \ll \exp
 \left((1+o(1)) \sqrt{\Lom} \right) \ ,    \\
  \sum_{n \le M} \frac{f(n)^2}{n} & \ll 
    \exp \left((0.5+o(1))\Lom \right)  \ , 
    \label{eq:f2n}
\end{split}
\end{equation}
 \begin{equation}
   \sum_{n \le M} \frac{j(n) (\tau_r*f)(n) f(n)}{n}
   \ll \exp \left((0.5+o(1))\Lom  \right)  \ , 
   \label{eq:resb}
\end{equation}
\begin{equation}
    \sum_{n \le T} \frac{(\tau*f)(n)^2}{n} \ , \
    \sum_{n \le T} \frac{(f*f)(n)^2}{n} 
    \ll T^{\epsilon} \ .
    \label{eq:resb2} 
\end{equation}

We now give an indication of how to prove \myref{eq:resb}. The proofs of the
other inequalities are similar.  We denote $\Sigma$ the sum to be estimated.
\begin{equation}
\begin{split}
   \Sigma & 
   \le \sum_{n=1}^{\infty} \frac{j(n)(\tau_r*f)(n)f(n)}{n}  
    = \prod_{p} \left( 1 +  \frac{j(p)(\tau_r*f)(p)f(p)}{p}
 \right) \ . 
 \nonumber
\end{split}
\end{equation}
Since $j(p)=1+O(p^{-1/2}), f(p)=\frac{L}{\log p},
 (\tau_r*f)(p)=\frac{L}{\log p}+r$ we obtain  
\[
   \log(\Sigma) \le \sum_{L^2 \le p \le \exp((\log L)^2)}
   \left( 
   \frac{L^2}{p(\log p)}
    + O \left( \frac{L}{p(\log p)} +\frac{L^2}{p^{3/2}} \right)
   \right) \ . 
\]
By the prime number theorem  
\[
   \sum_{L^2 \le p \le \exp((\log L)^2)} 
   \frac{1}{p (\log p)^2} = \frac{1}{8 (\log L)^2} (1+o(1))
\]
and thus
$
   \Sigma \le \exp \left(
   \frac{\log M}{2 \log \log M} (1+ o(1))
   \right)$. 

\section{Preliminary manipulations of $S$ and proof of Theorem \ref{thm1}}

\begin{proof} We commence with our evaluation of $S$.  We start with some 
initial manipulations.  Recall that our goal is to evaluate
\[
    S = \sum_{0 < \gamma < T} \zeta'(\rho) X(\rho) Y(1-\rho)  \ . 
\]   
The functional equation for the Riemann zeta function is
$
   \zeta(s) = \chi(s) \zeta(1-s) 
$, where
\[
   \chi(1-s) = \chi(s)^{-1} = 2(2 \pi)^{-s} \Gamma(s) \cos (\pi s/2) \ .
\]
Differentiating the functional equation,
\[
  \zeta'(s) = - \chi(s) \left( \zeta'(1-s) - \frac{\chi'}{\chi}(s) \zeta(1-s) \right) \ .
\]
From this last equation it follows that
\begin{align*}
   S 
   & = -\sum_{0 < \gamma < T} \chi(\rho) \zeta'(1-\rho) X(\rho) Y(1-\rho)  \\
   & = \frac{1}{2 \pi i} \int_{\mathcal{C}} \frac{\zeta'}{\zeta}(1-s) \chi(s) \zeta'(1-s) X(s)
   Y(1-s) \, ds
\end{align*}
where $\mathcal{C}$ is the positively oriented rectangle with vertices at $1-\kappa+i,\kappa+i,\kappa+iT,$ and $1-\kappa+iT$, and
$\kappa=1+\Lo^{-1}$.  Moreover, we choose $T$ so that the distance from $T$ to the nearest zero is $ \gg \Lo^{-1}$.   The bottom edge of this contour
is clearly $O(1)$.  On the top edge we have the standard bounds 
\begin{equation}
\begin{split}
   \chi(s) & \ll T^{1/2-\sigma}  \ , \\
   \zeta'(1-s) & \ll  T^{\sigma/2+\epsilon} \ , \\
   X(s) & \ll M^{1-\sigma} || \mbox{$ \frac{x_n}{n}$}||_{1}  \ , \\
   Y(1-s) & \ll M^{\sigma} || \mbox{$ \frac{y_n}{n}$}||_{1} \ , \\
   \frac{\zeta'}{\zeta}(1-s) & \ll \Lo^2  \ . 
   \nonumber
\end{split}
\end{equation}
Note that the last bound only holds for $s=\sigma+it$ as long as $|t-\gamma|
\gg \Lo^{-1}$ for all imaginary ordinates $\gamma$.  Combining these bounds
shows that the top edge of the contour is bounded by 
$
    MT^{1/2+\epsilon}
$.
Next note
that
\[
  \frac{\zeta'}{\zeta}(1-s) = \frac{\chi'}{\chi}(s) - \frac{\zeta'}{\zeta}(s)
\]
and
\[
  \chi(s) \zeta'(1-s) = - \zeta'(s) + \frac{\chi'}{\chi}(s) \zeta(s)
\]
imply the right-hand side of the integral is
\begin{equation}
  S_{R} = \frac{1}{2 \pi i} \int_{\kappa+i}^{\kappa+iT} \left(
  \frac{\chi'}{\chi}(s)^{2} \zeta(s) - 2 \frac{\chi'}{\chi}(s) \zeta'(s)
  + \frac{\zeta'}{\zeta}(s) \zeta'(s)
  \right) X(s) Y(1-s) \, ds \ .
  \label{eq:SR0}
\end{equation}
The left-hand side is
\[
  S_{L} = \frac{1}{2 \pi i} \int_{1-\kappa+iT}^{1-\kappa+i} \frac{\zeta'}{\zeta}(1-s) \chi(s) \zeta'(1-s)X(s)
   Y(1-s) \, ds \ .
\]
By the variable change $s \to 1-s$ the left side equals $-\overline{I}_{L}$ where
\[
   I_{L} = \frac{1}{2 \pi i} \int_{\kappa+i}^{\kappa+iT}  \chi(1-s) \frac{\zeta'}{\zeta}(s) \zeta'(s) X(s) Y(1-s)
   \, ds  \ . 
\]
We have now demonstrated that
\[
   S = S_{R}- \overline{I}_{L} + O(MT^{1/2+\epsilon}) 
\]
with $S_{R}$ and $I_{L}$ defined as above.  
We now set up the evaluation of $I_L$. 
We begin by writing
\[
   \frac{\zeta'}{\zeta}(s) \zeta'(s) A(s)= \sum_{m=1}^{\infty}
     a(m) m^{-s} 
\]
where 
\[
   a(m)= \sum_{uvw=m} \Lambda(u) \log(v) x_w \ . 
\]
It thus follows 
\[
   I_{L} = \sum_{k \le M} \frac{y_k}{k}
   \sum_{m=1}^{\infty} a(m) 
   \frac{1}{2 \pi i} 
    \int_{\kappa+i}^{\kappa+iT}  \chi(1-s) 
    \left(m/k\right)^{-s}
   \, ds    
\]
and we invoke 
\begin{lem} \label{stph} Let $r, \kappa_0 >0$ we have 
\[
    \frac{1}{2 \pi i} \int_{\kappa+i}^{\kappa+iT} \chi(1-s) r^{-s} \, ds
    = \delta(r) e(-r) + E(r,c)r^{-\kappa}
\]
uniformly for $\kappa_0 \le \kappa \le 2$ where $\delta(r) =1 $ if $r \le T/ 2 \pi$ 
and $\delta(r)=0$ otherwise.  Moreoever, 
\[
    E(r,\kappa) \ll T^{\kappa-1/2}+ \frac{T^{\kappa+1/2}}{|T-2 \pi r|+T^{1/2}} \ . 
\]
\end{lem}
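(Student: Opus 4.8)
The plan is to evaluate the contour integral $\frac{1}{2\pi i}\int_{\kappa+i}^{\kappa+iT}\chi(1-s)r^{-s}\,ds$ by replacing $\chi(1-s)$ with its asymptotic expansion coming from Stirling's formula. Recall $\chi(1-s)=2(2\pi)^{-s}\Gamma(s)\cos(\pi s/2)$, and on the vertical line $\Re(s)=\kappa$ with $\Im(s)=t\ge 1$ one has the classical estimate $\chi(1-s)=\left(\frac{t}{2\pi}\right)^{s-1/2}e^{i(t+\pi/4)}\bigl(1+O(1/t)\bigr)$; equivalently $\chi(1-s)\,r^{-s}$ has a saddle point determined by $\frac{d}{dt}\bigl(t\log\frac{t}{2\pi}-t-t\log r\bigr)=0$, i.e.\ at $t=2\pi r$. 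So the integrand oscillates with a stationary phase precisely when $2\pi r$ lies in the range $[1,T]$ of integration, which is why the main term $\delta(r)e(-r)$ appears exactly for $r\le T/2\pi$.

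First I would substitute the Stirling asymptotic for $\chi(1-s)$ into the integral, writing $\chi(1-s)=g(t)e^{i\phi(t)}+(\text{error})$ where $\phi(t)=t\log\frac{t}{2\pi}-t+\frac{\pi}{4}$ and $g(t)=\left(\frac{t}{2\pi}\right)^{\kappa-1/2}$, so that $\chi(1-s)r^{-s}=r^{-\kappa}g(t)e^{i(\phi(t)-t\log r)}+(\text{error})$. The error term from Stirling contributes $O(r^{-\kappa}\int_1^T t^{\kappa-3/2}\,dt)=O(r^{-\kappa}T^{\kappa-1/2})$, which is absorbed into the first piece of $E(r,\kappa)$. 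Next I would apply the stationary phase method (or, more elementarily, the first-derivative test / van der Corput's lemma) to $\int_1^T g(t)e^{i\psi(t)}\,dt$ with $\psi(t)=\phi(t)-t\log r$, noting $\psi'(t)=\log\frac{t}{2\pi r}$ and $\psi''(t)=1/t$. When the saddle $t_0=2\pi r$ lies in $[1,T]$, the stationary phase formula yields a main term of size $g(t_0)(t_0)^{1/2}r^{-\kappa}\asymp r^{-\kappa}\cdot r^{\kappa-1/2}\cdot r^{1/2}=1$ times a unimodular factor; a careful bookkeeping of the phase at $t_0=2\pi r$ (the $e^{i\pi/4}$ from Stirling cancels the $e^{-i\pi/4}$ from the Gaussian integral, and $\phi(2\pi r)-2\pi r\log r=-2\pi r$) produces exactly $e^{-2\pi i r}=e(-r)$. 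When $t_0\notin[1,T]$, there is no stationary point and the first-derivative test gives a bound of $O\bigl(g(T)/|\psi'(T)|+g(1)/|\psi'(1)|\bigr)$ together with the boundary contributions, which works out to $O\bigl(\frac{T^{\kappa+1/2}}{|T-2\pi r|}r^{-\kappa}\bigr)$ — this is the second piece of $E(r,\kappa)$. The extra $T^{1/2}$ in the denominator of $E$ handles the transitional range $|T-2\pi r|\le T^{1/2}$, where one instead bounds the integral trivially on a length-$O(T^{1/2})$ sub-interval around $t=T$ by $O(T^{1/2}g(T)r^{-\kappa})$.

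I expect the main obstacle to be making the stationary phase analysis uniform in both $r$ and $\kappa$ over the stated ranges, and in particular handling the boundary and near-boundary cases cleanly. There are three regimes to separate: $r$ small so the saddle lies well inside $(1,T)$; $r$ with $2\pi r$ near $T$ (the transitional range, requiring the $T^{1/2}$ cushion); and $2\pi r>T$ (no saddle, pure first-derivative-test bound). The bottom endpoint at $t=1$ is harmless since Stirling is only being used for $t\ge 1$ and the contributions there are $O(1)$-type and dominated by $T^{\kappa-1/2}$. A convenient technical route that avoids re-deriving stationary phase from scratch is to shift the contour and compare with a Mellin–Barnes / Gamma-function identity: one can write the integral as a difference of two integrals of $\chi(1-s)r^{-s}$ along horizontal segments plus the full vertical line, evaluate the full line by $\frac{1}{2\pi i}\int_{(\kappa)}\chi(1-s)r^{-s}\,ds=\delta(r)e(-r)+(\text{explicit tail})$ via known formulas for such integrals (this is essentially Lemma-type material in Titchmarsh or in Gonek's work on the approximate functional equation), and bound the two horizontal pieces by the convexity bounds for $\chi$. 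Either way the arithmetic of the phase is the delicate point; once the $e(-r)$ is correctly identified the error estimates are routine applications of van der Corput's lemmas.
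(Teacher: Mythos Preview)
The paper does not actually prove this lemma; it simply cites Lemma~2 of Gonek \cite{Go} and moves on. Your stationary-phase outline is precisely the method Gonek uses to establish that result (Stirling's approximation for $\chi(1-s)$, identification of the saddle at $t_0=2\pi r$, stationary-phase contribution $e(-r)$ when $t_0\in[1,T]$, and first/second derivative bounds otherwise with a trivial estimate on the transitional window $|T-2\pi r|\le T^{1/2}$), so your proposal is correct and in fact supplies the content the paper only references. You even flag this yourself at the end when you mention ``Lemma-type material \ldots\ in Gonek's work''; that is exactly the citation the paper invokes.
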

\noindent This result follows from Lemma 2 of \cite{Go}. 
Applying Lemma \ref{stph} yields
$
   I_L = \mathcal{M} + T^{1/2}\mathcal{E}_{1}' + T^{3/2}\mathcal{E}_{2}'
$
where
\begin{equation}
   \mathcal{M}
   = \sum_{k \le M} \frac{y_k}{k} \sum_{m \le \frac{kT}{2 \pi}} a(m)
   e \left( - \frac{m}{k} \right)  \ , 
   \label{eq:M}
\end{equation}   
\begin{equation}
  \mathcal{E}_{1}' =  T^{1/2}
 \sum_{k \le M} |y_k|
 \sum_{m=1}^{\infty} \frac{|a(m)|}{m^{\kappa}}
   \ , 
\end{equation}
\begin{equation}
     \mathcal{E}_{2}' =  T^{3/2}
 \sum_{k \le M} |y_k|
 \sum_{m=1}^{\infty} \frac{|a(m)|}{m^{\kappa}}
  (|T-2 \pi m/n|+T^{1/2})^{-1} \ . 
\end{equation}
Note that 
\[
  \mathcal{E}_{1}' \ll 
  ||{\bf y}||_1 
  \sum_{m \le M} \frac{|x_m|}{m}
  \frac{\zeta'}{\zeta}(\kappa) \zeta'(\kappa)
  \ll \Lo^3  ||{\bf y}||_1   || \mbox{$ \frac{x_m}{m}$}||_1
  \ . 
\]
We next consider $\mathcal{E}_{2}'$.  We split this into the cases:
$(i)$ $|T-2\pi m/n|>T/2$, $(ii)$ $\sqrt{T} \le 
|T-2 \pi m/n| \le T/2$, and $(iii)$ $|T-2 \pi m/n| \le \sqrt{T}$. 
In case $(i)$, $g_{m,n}(T) \ll T^{-1}$ and we have 
\[
   \mathcal{E}_{21}' \ll T^{-1} ||{\bf y}||_1 
   \sum_{m=1}^{\infty} \frac{|a(m)|}{m^{\kappa}} \ll
   T^{-1} \Lo^3  ||{\bf y}||_1  || \mbox{$ \frac{x_m}{m}$}||_1
  \ . 
\]
In case $(ii)$ we begin by assuming without loss of generality that
$\sqrt{T} \le 2 \pi m/n-T \le T/2$.  We divide this into $\ll \log T$ 
intervals of the shape $T+P < 2 \pi m/n< T+2P$ with $\sqrt{T}
\ll P \ll T$.  We denote the interval $I= [\frac{nT}{2 \pi}
+ \frac{nP}{2 \pi}, \frac{nT}{2 \pi}
+ \frac{2nP}{2 \pi}]$. 
Note that $|a(m)| \le ||{\bf x}||_{\infty} \tau(m) \log^2 m$
and hence
\begin{equation}
\begin{split}
    \mathcal{E}_{22}' & 
    \ll \xinf \sum_{P} \sum_{n \le M} |y_n|
    \sum_{m \in I} \frac{\tau(m) \log^2 m}{mP} \\
    &  \ll T^{-1} \xinf \sum_{P} \sum_{n \le M} \frac{|y_n|}{nP}
    \sum_{m \in I} \tau(m) \log^2 m  \\
    & \ll  T^{-1} \xinf \sum_{P} \sum_{n \le M} \frac{|y_n|}{nP}
    (nP) \Lo^{3}
    \ll T^{-1} \Lo^{4} \xinf \yone \ . 
    \nonumber
\end{split}
\end{equation}
In the second inequality above we apply an estimate
for the divisor sum in short intervals. 
For a precise statement, see Lemma \ref{sidiv} which occurs later in the 
article. 
In the last case we have $|T-2 \pi m/n| \le \sqrt{T}$ and
$g_{m,n}(T) \ll T^{-1/2}$.  Put $J= [\frac{n}{2\pi}(T-\sqrt{T}) ,
\frac{n}{2 \pi}(T+\sqrt{T})]$. 
We now have 
\begin{equation}
\begin{split}
    \mathcal{E}_{23}' &
    \ll T^{-1/2} \xinf \sum_{n \le M} |y_n|
    \sum_{m \in J} \frac{\tau(m) \log^2 m}{m}
    \ll T^{-3/2} \xinf \sum_{n \le M} \frac{|y_n|}{n}
    (n \sqrt{T}) \Lo^3 \\
    &  \ll T^{-1} \Lo^3 \xinf \yone  \ . 
    \nonumber
\end{split}
\end{equation}
Combining our estimates yields $T^{1/2} \mathcal{E}_{1}'
+T^{3/2} \mathcal{E}_2'$ is bounded by 
\[
   T^{1/2} (\Lo^3 \yone  ||x_m/m||_1
   + \Lo^4 \xinf \yone)
   \ll T^{1/2} \Lo^{4} \yone \xinf
\]
and hence
\begin{equation}
    S   = S_{R} - \overline{\mathcal{M}} + O(
     T^{1/2} \Lo^{4} \yone \xinf)  
     \label{eq:Sid}
\end{equation}
where $S_{R}$ and $\mathcal{M}$ are given by~(\ref{eq:SR0}) 
and~(\ref{eq:M}) respectively.
We now simplify our expression~(\ref{eq:M}) for $\mathcal{M}$. 
The first step will be to express the additive character $e(-m/k)$ in terms
of multiplicative characters.  In order to do this we write 
$m/k= m'/k'$ with $(m',k')=1$.  We have the well-known 
identity
\[
    e \left(- \frac{m}{k} \right) = e \left( - \frac{m'}{k'} \right)
    =  \frac{1}{\phi(k')}
    \sum_{\chi \ \mathrm{mod} \ k'} \tau(\overline{\chi}) \chi(-m')
\]
where for a character $\chi$ modulo $k'$, 
$\tau(\chi) = \sum_{a=1}^{k'} \chi(a) e(a/k')$ is the usual Gauss sum.
Now note that $\tau(\chi_0)= \mu(k')$, where $\chi_0$ is the principal
character modulo $k'$.  Hence 
\begin{equation}
   e \left( - \frac{m}{k} \right)
   = \frac{\mu(k')}{\phi(k')}
   + \frac{1}{\phi(k')}  
       \sum_{{\begin{substack}{\chi \ \mathrm{mod} \ k'
         \\ \chi \ne \chi_0}\end{substack}}} 
   \tau(\overline{\chi}) \chi(-m') \ . 
   \label{eq:emk}
\end{equation}
The basic idea is that when we insert the expression~(\ref{eq:emk})
back in~(\ref{eq:M}) that $\mu(k')/\phi(k')$ term will account 
for the main term of $\mathcal{M}$ 
 and the sum over non-principal characters modulo
$k'$ will be an error term.  Before commencing with this strategy, 
we must first convert the above sum to a sum over primitive characters. 
This is since we shall invoke an analytic version of the large sieve inequality
involving only primitive characters. 
 If a character $\chi$ modulo $k'$ is 
induced by the primitive character $\psi$ modulo $q$ then we
have $\tau(\chi)= \mu(k'/q)\psi(k'/q) \tau(\psi)$ (see \cite[p.\ 67]{Da}). 
We shall use the notation 
$\sum_{\psi\mathrm{\mod} q}^{*}$ to denote summation over primitive characters modulo $q$.  Therefore
\[
   \frac{1}{\phi(k')}  
       \sum_{{\begin{substack}{\chi \ \mathrm{mod} \ k'
         \\ \chi \ne \chi_0}\end{substack}}} 
   \tau(\overline{\chi}) \chi(-m') 
   = \frac{1}{\phi(k')}
   \sum_{{\begin{substack}{q \mid k'
         \\ q > 1}\end{substack}}} 
   \ \chiq \mu \left( \frac{k'}{q} \right)
   \overline{\psi} \left(  \frac{k'}{q}
   \right) \tau(\overline{\psi})  \psi(-m') 
\] 
since $(m',k')=1$ implies $(m',q)=1$ and thus $\chi(-m')= \psi(-m')$.
The next step is to rewrite this formula in terms of $m$ and $k$. 
Let $g=(m,k)$.  By the M\"{o}bius inversion formula, we have 
\[
   f \left( m',k' \right)
   = f \left( 
   \frac{m}{g}, \frac{k}{g}
   \right) = \sum_{d \mid g} \sum_{e \mid d} \mu
   \left(  \frac{d}{e} \right)
   f \left( 
   \frac{m}{e}, \frac{k}{e} 
   \right) 
\]
for any function $f$.  Moreover, note that the condition 
$d \mid g$ is equivalent to $d \mid m$, $d \mid k$.  
Thus we derive 
\begin{equation}
\begin{split}
   &  \frac{1}{\phi(k')}  
       \sum_{{\begin{substack}{\chi \ \mathrm{mod} \ k'
         \\ \chi \ne \chi_0}\end{substack}}} 
   \tau(\overline{\chi}) \chi(-m')   \\
   & =   \sum_{{\begin{substack}{d \mid m
         \\ d  \mid k}\end{substack}}} 
         \sum_{e \mid d} \frac{\mu(d/e)}{\phi(k/e)}
          \sum_{{\begin{substack}{q \mid k/e
         \\ q > 1}\end{substack}}} 
          \ \chiq
         \mu \left( \frac{k}{eq} \right)
         \overline{\psi}  \left( \frac{k}{eq} \right)
        \tau(\overline{\psi}) \psi \left( - \frac{m}{e} \right) \\
   & =  \sum_{{\begin{substack}{q \mid k
         \\ q > 1}\end{substack}}}   \chiq \tau(\overline{\psi})
         \sum_{{\begin{substack}{d \mid m
         \\ d  \mid k}\end{substack}}} 
         \sum_{{\begin{substack}{e \mid d
         \\ e  \mid k/q}\end{substack}}}  \frac{\mu(d/e)}{\phi(k/e)}
         \overline{\psi} \left( 
         - \frac{k}{eq}  \right) \psi \left( \frac{m}{e} \right)
         \mu \left(
         \frac{k}{eq}
         \right)  \\
      & =  \sum_{{\begin{substack}{q \mid k
         \\ q > 1}\end{substack}}}  \ \chiq \tau(\overline{\psi})
          \sum_{{\begin{substack}{d \mid m
         \\ d  \mid k}\end{substack}}} 
         \psi \left( \frac{m}{d} \right) \delta(q,k,d,\psi)
         \label{eq:nonp}
\end{split}
\end{equation}
where 
\begin{equation}
      \delta(q,k,d,\psi) = 
       \sum_{{\begin{substack}{e \mid d
         \\ e \mid k/q}\end{substack}}} 
      \frac{\mu(d/e)}{\phi(k/e)} \overline{\psi}
      \left( - \frac{k}{eq}
      \right) \psi \left(\frac{d}{e} \right) \mu \left(
      \frac{k}{eq}
      \right) \ .
      \label{eq:delta}
\end{equation}
By~(\ref{eq:M}),~(\ref{eq:emk}), and~(\ref{eq:nonp}) we have now shown that
$\mathcal{M}= \mathcal{M}_0 + \mathcal{E}$ where 
\begin{equation}
    \mathcal{M}_0
    = \sum_{k \le M} \frac{y_k}{k} \sum_{m \le \frac{kT}{2 \pi}} a(m)
    \frac{\mu(k/(m,k))}{\phi(k/(m,k))} \ , 
    \label{eq:M01}
\end{equation}
\begin{equation}
  \mathcal{E} = 
  \sum_{k \le M} \frac{y_k}{k} \sum_{m \le \frac{kT}{2 \pi}} a(m)
   \sum_{{\begin{substack}{q \mid k
         \\ q > 1}\end{substack}}}  \ \chiq \tau(\overline{\psi})
          \sum_{{\begin{substack}{d \mid m
         \\ d  \mid k}\end{substack}}} 
         \psi \left( \frac{m}{d} \right) \delta(q,k,d,\psi)
    \ . 
  \label{eq:E}
\end{equation}
Thus we conclude by~(\ref{eq:Sid}) and above decomposition of
$\mathcal{M}$ that 
\begin{equation}
       S   = S_{R} - \overline{\mathcal{M}_{0}}
       - \overline{\mathcal{E}} + 
        O(
     T^{1/2} \Lo^{4} \yone \xinf)  \ . 
        \label{eq:S}
\end{equation}
The remainder of the article will be devoted to computing
asymptotic expressions for $S_R$ and $\mathcal{M}_{0}$ and for providing
an upper bound for $\mathcal{E}$.  The evaluation 
of $S_R$ is straightforward and will be done in the next section.
The evaluation of $\mathcal{M}_{0}$ is also essentially elementary. 
The most involved part of the argument will be in bounding $\mathcal{E}$. 
In fact, we shall establish the following results which will imply our theorem:
\begin{prop} \label{SR} We have
\[
     S_{R} =    \frac{T}{2 \pi} 
    \sum_{nu \le M} \frac{r_0(n)x_{u}y_{nu}}{nu}
    + O(  T^{\epsilon}(||{\bf y}||_{\infty} M
    + ||{\bf y}||_1) )  
\]
where $r_0(n) = P_2(\Log)-
    2P_1(\Log)(\log n)+  (\Lambda*\log)(n)$
and $P_2$, $P_1$ are monic polynomials of degrees $2, 1$ respectively.
\end{prop}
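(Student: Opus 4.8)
\noindent\textit{Proof proposal.} The plan is to evaluate $S_{R}$ directly on the vertical segment $\Re s=\kappa=1+\Lo^{-1}$, where every Dirichlet series below converges absolutely, by expanding each factor into a series (or, for $\chi'/\chi$, an asymptotic expansion) and integrating term by term. First I would use the Stirling-type expansion $\tfrac{\chi'}{\chi}(s)=-\log(t/2\pi)+O(1/t)$, valid for $\sigma$ bounded and $t\ge1$, to write on the segment
\[
\Bigl(\tfrac{\chi'}{\chi}(s)\Bigr)^{2}\zeta(s)-2\tfrac{\chi'}{\chi}(s)\zeta'(s)+\tfrac{\zeta'}{\zeta}(s)\zeta'(s)
=\log^{2}\!\bigl(\tfrac{t}{2\pi}\bigr)\zeta(s)+2\log\!\bigl(\tfrac{t}{2\pi}\bigr)\zeta'(s)+\tfrac{\zeta'}{\zeta}(s)\zeta'(s)+E(s),
\]
where $E(s)\ll t^{-1}\log t\,\bigl(|\zeta(s)|+|\zeta'(s)|\bigr)$. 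Inserting $\zeta(s)=\sum_{n}n^{-s}$, $\zeta'(s)=-\sum_{n}(\log n)n^{-s}$, $\tfrac{\zeta'}{\zeta}(s)\zeta'(s)=\sum_{n}(\Lambda*\log)(n)n^{-s}$ and $X(s)Y(1-s)=\sum_{u,k\le M}\frac{x_{u}y_{k}}{k}(k/u)^{s}$, the main part of the integrand is
\[
\sum_{u,k\le M}\ \sum_{n\ge1}\frac{x_{u}y_{k}}{k}\,g(n;t)\Bigl(\frac{k}{un}\Bigr)^{s},\qquad
g(n;t)=\log^{2}\!\bigl(\tfrac{t}{2\pi}\bigr)-2(\log n)\log\!\bigl(\tfrac{t}{2\pi}\bigr)+(\Lambda*\log)(n).
\]
Since $\sum_{n}|g(n;t)|n^{-\kappa}\ll\Lo^{3}$ on the segment, the $n$-sum converges absolutely and may be interchanged with the $t$-integral.

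\smallskip
Next I would split off the diagonal $un=k$. There $(k/un)^{s}=1$, and writing $k=nu$ (so $u\mid k$, $n=k/u$) the diagonal contributes $\tfrac{1}{2\pi}\sum_{nu\le M}\frac{x_{u}y_{nu}}{nu}\int_{1}^{T}g(n;t)\,dt$. With the elementary antiderivatives
\[
\int_{1}^{T}\log^{2}\!\bigl(\tfrac{t}{2\pi}\bigr)\,dt=T\bigl(\Log^{2}-2\Log+2\bigr)+O(1),\qquad
\int_{1}^{T}\log\!\bigl(\tfrac{t}{2\pi}\bigr)\,dt=T\bigl(\Log-1\bigr)+O(1),
\]
this equals $\frac{T}{2\pi}\sum_{nu\le M}\frac{r_{0}(n)x_{u}y_{nu}}{nu}$ with the monic polynomials $P_{2}(x)=x^{2}-2x+2$ and $P_{1}(x)=x-1$, up to an error $\ll\Lo^{2}\sum_{nu\le M}\frac{|x_{u}y_{nu}|}{nu}\ll T^{\epsilon}\yinf$, which is of the stated size.

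\smallskip
For the off-diagonal terms ($un\ne k$) I would integrate by parts in $t$ against $(k/un)^{it}$; since $g(n;t)$ and $\partial_{t}g(n;t)$ are bounded by a power of $\log(2nt)$ on $[1,T]$, each inner integral is $\ll\bigl(\log(2nT)\bigr)^{2}(k/un)^{\kappa}\,|\log(k/un)|^{-1}$. Summing over $n$, the range where $un$ is large converges thanks to the decay of $(k/un)^{\kappa}$ against a standard divisor sum bound, while the dominant part comes from $un$ close to $k$, where $|\log(k/un)|\gg|un-k|/\max(un,k)$; collecting the divisors $u\mid j$ of $j=un$ and feeding in the coefficient estimates of Section~\ref{coeff}, the off-diagonal total is $\ll T^{\epsilon}(\yinf M+\yone)$. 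Finally, the discarded Stirling error contributes $\ll\int_{1}^{T}\tfrac{\log t}{t}\bigl(|\zeta(\kappa+it)|+|\zeta'(\kappa+it)|\bigr)\,|X(\kappa+it)\,Y(1-\kappa-it)|\,dt\ll T^{\epsilon}\yone$. Adding the three pieces yields Proposition~\ref{SR}.

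\smallskip
The step I expect to be the main obstacle is the off-diagonal estimate: the inner sum runs over all $n\ge1$, so one must simultaneously control the clustering of the integers $un$ near $k$ (which, when the $x_{u}$ are large, is exactly what forces the $\yinf M$ term) and the convergence of the tail where $un$ is large, uniformly in $u,k$ and uniformly over the admissible coefficient sequences. Everything else amounts to the Stirling expansion of $\chi'/\chi$ and elementary integration.
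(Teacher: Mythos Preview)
Your proposal is correct and follows essentially the same strategy as the paper: replace $\chi'/\chi$ by $-\log(t/2\pi)$ via Stirling, expand the remaining Dirichlet series on $\Re s=\kappa$, split into diagonal ($un=k$) and off-diagonal, and bound the off-diagonal using the saving $|\log(k/un)|^{-1}$ coming from the oscillatory $t$-integral. The paper packages this as a general lemma for $J_k=\frac{1}{2\pi i}\int(\chi'/\chi)^k D(s)X(s)Y(1-s)\,ds$ applied with $(k,\alpha_n)=(2,1),(1,-\log n),(0,(\Lambda*\log)(n))$, whereas you combine all three into the single weight $g(n;t)$; this is purely organizational.

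One practical remark on your off-diagonal bookkeeping: you propose to fix $u,k$ and sum over the infinite variable $n$, then regroup via $j=un$. This works, but the paper's ordering is cleaner---it fixes $h=un$ and sums over the \emph{finite} variable $v=k\le M$, reducing everything to the single quantity $S(h)=\sum_{v\le M,\,v\ne h}|y_v|\,|\log(v/h)|^{-1}$, which is bounded by $\|{\bf y}\|_1$ when $h$ is far from $[1,M]$ and by $\|{\bf y}\|_\infty\, h\log h\ll \|{\bf y}\|_\infty M\log M$ when $h\le 1.1M$, uniformly in $h$. The remaining sum $\sum_{n,u}|\alpha_n x_u|(nu)^{-\kappa}$ then factors and is $\ll \Lo^{O(1)}$. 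Doing $k$ last avoids the minor nuisance you anticipate of controlling how the arithmetic progression $\{un:n\ge1\}$ clusters near $k$, and it makes transparent why the error has exactly the shape $T^{\epsilon}(\|{\bf y}\|_\infty M+\|{\bf y}\|_1)$.
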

\begin{prop} \label{M0}
$(i)$ We have in the divisor case
\begin{equation}
   \mathcal{M}_{0} =
   \frac{T}{2 \pi}  \sum_{{\begin{substack}{u,v \le M
         \\ (u,v)=1}\end{substack}}}
   \frac{c(u,v) H(M;u,v)}{uv}  
   + \frac{T}{4 \pi} \sum_{gv \le M} \frac{y_{g}x_{gv}}{gv} 
    R_2 \left(\log(\mbox{$\frac{T}{2 \pi v}$}) \right)  + \mathcal{E}_0   
\end{equation}
where
\begin{equation}
   \mathcal{E}_0 \ll   T  \exp(-c_4 \sqrt{\log T}) 
  || \mbox{$ \frac{x_n}{n}$}||_1  ||
  \mbox{$\frac{(\tau_3*|x|)(n)y_n}{n} $} ||_{1}  
   \ , 
   \label{eq:E01}
\end{equation}
$H(M;u,v) = \sum_{g \le \min(M/u,M/v)} \frac{y_{ug}x_{vg}}{g}$,
\[
    c(u,v) =  - \mbox{$\frac{1}{2}$}\Lambda_2(a)
   + R_1 \left(  \log(\mbox{$\frac{T}{2 \pi v}$})  \right) \Lambda(u)
    + \tilde{R}_1 \left( 
   \log(\mbox{$\frac{T}{2 \pi v}$})
   \right)  \alpha_1(u) + \alpha_2(u)  \ ,
\]
and $R_1,\tilde{R}_1,R_2$ are monic polynomials of degrees $1,1,2$.  Moreover,
$\alpha_1, \alpha_2$ are arithmetic functions supported on those $n$ 
with $\omega(n) \le 2$.  More precisely,   $\alpha_1(p^{\alpha}) = \frac{\log p}{p-1}$, 
   $ \alpha_2(p^{\alpha})   =
  \mbox{$  - \frac{(\alpha+1) (\log p)^2}{p-1}
    + \frac{D\log p}{p-1} - \frac{\log p}{(p-1)^2}$}$
    for some $D \in \mathbb{R}$, and 
 $\alpha_2 (p^{\alpha} q^{\beta}) = \mbox{$- (\log p)(\log q) (
   \frac{1}{p-1} + \frac{1}{q-1})$}$ for $\alpha,\beta \in \mathbb{N}$.   \\
   
\noindent $(ii)$  In the resonator case, we have,  assuming the large zero-free region 
conjecture for the Riemann zeta function, 
the same result as above but with  
\begin{equation}
    \mathcal{E}_0 \ll   T  \exp 
    \left(-\mbox{$\frac{c'_4 \log T}{\log \log T}$} \right) 
    || \mbox{$ \frac{x_n}{n}$}||_1  
    || \mbox{$ \frac{(\tau_3*|x|)(n)y_n}{n}$} ||_{1}  \ . 
    \label{eq:E02}
\end{equation}
\end{prop}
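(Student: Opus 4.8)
The plan is to reduce $\mathcal{M}_0$ to dilated partial sums of the arithmetic function $\widetilde\ell:=\Lambda*\log$, whose Dirichlet series $\frac{\zeta'}{\zeta}(s)\zeta'(s)=\frac{\zeta'(s)^2}{\zeta(s)}$ has a pole of order $3$ at $s=1$, to evaluate those sums by a Perron-type argument, and then to reassemble the pieces. First, expanding the Ramanujan sum via $\frac{\mu(k/(m,k))}{\phi(k/(m,k))}=\frac{c_k(m)}{\phi(k)}=\frac{1}{\phi(k)}\sum_{d\mid (m,k)}d\,\mu(k/d)$ gives
\[
  \mathcal{M}_0=\sum_{k\le M}\frac{y_k}{k\phi(k)}\sum_{d\mid k}d\,\mu(k/d)\sum_{\substack{m\le kT/(2\pi)\\ d\mid m}}a(m).
\]
Writing $a=\widetilde\ell*x$ and $m=w\ell$ with $w\le M$ the support variable of $x$, and putting $g=(d,w)$, $d_1=d/g$, $w_1=w/g$, the divisibility $d\mid w\ell$ is equivalent to $d_1\mid \ell$; hence the innermost sum equals $\sum_{w\le M}x_w\sum_{n\le x d_1/w_1,\ d_1\mid n}\widetilde\ell(n)$, and everything rests on the behaviour of $\sum_{n\le N,\ d_1\mid n}\widetilde\ell(n)$ for moduli $d_1\le M=T^{\t}$.

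The analytic input I would establish is: uniformly for $d_1\le M$,
\[
  \sum_{\substack{n\le N\\ d_1\mid n}}\widetilde\ell(n)=N\,Q_{d_1}(\log N)+O\!\Big(\tfrac{N}{d_1}\,\tau(d_1)^{O(1)}E(N)\Big),
\]
where $Q_{d_1}$ is a polynomial of degree $2$ whose coefficients are built from the Laurent expansion at $s=1$ of $\frac{\zeta'(s)^2}{\zeta(s)}$ with the Euler factors at the primes $p\mid d_1$ modified, and where $E(N)=\exp(-c\sqrt{\log N})$ unconditionally while $E(N)=\exp(-c'\log N/\log\log N)$ under the large zero-free region conjecture for $\zeta$. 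This is the prime number theorem with remainder: apply Perron's formula to $\sum_{d_1\mid n}\widetilde\ell(n)n^{-s}$, which one obtains from $-\frac{\zeta'}{\zeta}(s)$ and $-\zeta'(s)$ by keeping in $\widetilde\ell=\sum_{ab}\Lambda(a)\log(b)$ only the prime powers $a$ with $(d_1/(d_1,a))\mid b$, so that the integrand still has only a triple pole at $s=1$ and no further singularity in the zero-free region; the residue produces $N\,Q_{d_1}(\log N)$ and the shifted contour is bounded by the zero-free region together with the standard bounds for $\zeta$, $\zeta'$ and $\zeta'/\zeta$, the factor $d_1^{-1}\tau(d_1)^{O(1)}$ arising because $d_1^{-s}\frac{\zeta'(s)^2}{\zeta(s)}$ is the dominant piece. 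No Dirichlet characters appear here; only the zero-free region of $\zeta$ itself is used, which is exactly why part $(ii)$ needs only the conjecture for $\zeta$.

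Substituting this back gives, as main term, $\frac{T}{2\pi}\sum_{k\le M}\frac{y_k}{\phi(k)}\sum_{d\mid k}\mu(k/d)\sum_{w\le M}\frac{x_w(d,w)}{w}\,Q_{d/(d,w)}\!\big(\log\tfrac{kT(d,w)}{2\pi dw}\big)$. I would then reindex the pair $(k,w)$ by $g=(k,w)$, $u=k/g$, $v=w/g$, so that $(u,v)=1$, $y_k=y_{ug}$, $x_w=x_{vg}$, and the free sum over $g$ becomes exactly $H(M;u,v)=\sum_{g\le \min(M/u,M/v)}\frac{y_{ug}x_{vg}}{g}$; the remaining inner sum over $d\mid k$, weighted by $\mu(k/d)/\phi(k)$ and the Laurent data of $Q$, then collapses prime by prime. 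Expanding $Q$ about $\log\tfrac{T}{2\pi v}$ and carrying out the resulting Euler products over $d$, the triple pole and its derivatives supply the pieces $\mu*\log^2=\Lambda_2$ and $\mu*\log=\Lambda$ (hence the $-\tfrac12\Lambda_2(u)$ and $R_1(\cdot)\Lambda(u)$ terms), while the arithmetic of $\phi$ supplies $\widetilde R_1(\cdot)\alpha_1(u)+\alpha_2(u)$ with $\alpha_1(p^{\a})=\frac{\log p}{p-1}$ and the stated $\alpha_2$ — that is, $c(u,v)$. The contribution with $u=1$ carries no $\Lambda$- or $\alpha$-factor; instead the entire triple pole survives, its expansion producing the monic degree-$2$ polynomial $R_2$ together with a factor $\tfrac12$, and separates off as $\frac{T}{4\pi}\sum_{gv\le M}\frac{y_gx_{gv}}{gv}R_2\!\big(\log\tfrac{T}{2\pi v}\big)$.

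For the error term the $O(\cdot)$ contributions sum to $\ll E(T)\sum_{k\le M}\frac{|y_k|}{k\phi(k)}\sum_{d\mid k}d\sum_{w\le M}\frac{kT(d,w)}{dw}\,\tau(\tfrac{d}{(d,w)})^{O(1)}|x_w|$, and using $\phi(k)\gg k/\log\log k$, the multiplicativity bounds \myref{eq:divc2} (resp.\ $f(mn)\ll f(m)f(n)$) to pull the gcd and divisor factors through, and the $L^1$-bounds \myref{eq:divbds} (resp.\ \myref{eq:f2n}, \myref{eq:resb}), this collapses to $\ll T\,E(T)\,\|x_n/n\|_1\,\|(\tau_3*|x|)(n)y_n/n\|_1$; the threefold convolution appears because $a=\Lambda*\log*x$ is a threefold convolution and the crude bound $|a(m)|\ll(\log m)^2(\mathbf 1*|x|)(m)$, summed over the progression $d\mid m$ and then over $d\mid k$, costs two further divisor factors. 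This is \myref{eq:E01}, and the identical computation with $E(T)=\exp(-c'_4\log T/\log\log T)$ gives \myref{eq:E02}. The main obstacle is the reassembly step — running the prime-by-prime Euler-product computation that combines $\mu(k/d)$, $1/\phi(k)$ and the Laurent data of $Q_{d/(d,w)}$ so that precisely $\Lambda_2,\Lambda,\alpha_1,\alpha_2$ and the monic polynomials $R_1,\widetilde R_1,R_2$ drop out, and isolating cleanly the $u=1$ diagonal. The sole purpose of the large zero-free region conjecture here is that in the resonator case the weight $\|(\tau_3*|x|)(n)y_n/n\|_1$ is of size $\exp(O(\t\log T/\log\log T))$ by \myref{eq:resb}, so a saving of only $\exp(-c\sqrt{\log T})$ would be useless, whereas a saving $\exp(-c'\log T/\log\log T)$ with $c'$ large — available precisely from a zero-free region of width $\gg 1/\log\log T$ — dominates it.
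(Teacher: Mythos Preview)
Your approach is a genuine alternative to the paper's, and the analytic input you describe is correct (it is essentially the paper's Lemma~\ref{shu} specialised to $k=1$, i.e.\ with no coprimality condition). You are also right that only the zero-free region of $\zeta$ itself enters, and your explanation of why the stronger region is needed in the resonator case is exactly the point.

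The paper organises the calculation differently and more efficiently. Rather than opening the Ramanujan sum, it substitutes $l=(m,k)$, $k\to lk$ directly, obtaining
\[
\mathcal{M}_0=\sum_{lk\le M}\frac{y_{lk}\mu(k)}{lk\,\phi(k)}\sum_{\substack{m\le kT/2\pi\\ (m,k)=1}}a(ml),
\]
decomposes $a(ml)$ via Lemma~\ref{conv} as $\sum_{gh=l}\sum_{(v,kh)=1}x_{gv}\sum_{(u,k)=1}(\Lambda*\log)(hu)$, and then applies Lemma~\ref{shu} with both dilation $h$ and coprimality modulus $k$. The point of keeping the coprimality is that Lemma~\ref{shu} produces a factor $\phi(k)/k$ which cancels the $1/\phi(k)$ already present; after the further substitution $u=hk$ one is left with a M\"obius sum $\sum_{hk=u}\mu(k)\bigl(X_1(h,k,v)+X_2(h,k,v)\bigr)$ that manifestly depends only on $u$ and $v$, so $g$ drops out and $H(M;u,v)$ factors off at once. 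That M\"obius sum is then computed by the elementary identities $\sum_{d\mid u}\mu(d)(\log d)^j$ and by Lemma~\ref{arith}, which evaluates $\phi_j(u)=\sum_{hk=u}\mu(k)\eta_j(k)$ etc.\ and shows they are supported on $\omega(u)\le 2$; this is precisely where $\Lambda$, $\Lambda_2$, $\alpha_1$, $\alpha_2$ and the monic polynomials emerge.

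In your route the $1/\phi(k)$ is not cancelled by the analytic step, so after reindexing by $g=(k,w)$ you must still verify that
\[
\frac{1}{\phi(ug)}\sum_{d\mid ug}\mu(ug/d)\,(d,vg)\,P_{d/(d,vg)}\!\Bigl(\log\tfrac{ugT(d,vg)}{2\pi vgd}\Bigr)
\]
is independent of $g$. This does hold (and ultimately reproduces $c(u,v)$ and the $R_2$-diagonal), but it is not the one-line ``prime by prime'' collapse you suggest: one needs identities such as $\sum_{d\mid k}\mu(k/d)(d,w)=\phi(k)\mathbf 1_{k\mid w}$ for the top-order piece, and analogous but more intricate evaluations for the pieces carrying $\log d_1$, $(\Lambda*\log)(d_1)$ and $g(d_1,1)$. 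The paper's organisation sidesteps this entire computation by building the coprimality into the analytic lemma from the start.
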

\begin{thm} \label{Ebd}
$(i)$ If $x_n$, $y_n$ satisfy~(\ref{eq:divc}),~(\ref{eq:divc2}) then for $0< \theta < 1/2$ we
have for any $A'>0$
\begin{equation}
   \mathcal{E} \ll_{A'} T(\log T)^{-A'} +
   T^{\frac{3}{4}+\frac{\theta}{2}+\epsilon}  \ . 
   \label{eq:Ea}
\end{equation}
$(ii)$ Assume the large zero-free region conjecture. 
If $x_n=y_n=f(n)$ where $f$ is defined by
(\ref{eq:resc}) then for $0 < \theta < 1/6$
we have
\begin{equation}
    \mathcal{E} \ll T \exp \left(
    - \mbox{$\frac{c_5 \log T}{\log \log T}$}
    \right) + 
     T^{\frac{3}{4}+\frac{3\theta}{2}+\epsilon}  \ . 
     \label{eq:Eb}
\end{equation}
\end{thm}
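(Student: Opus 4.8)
The plan is to treat $\mathcal{E}$, given by~(\ref{eq:E}), as a twisted analogue of the Bombieri--Vinogradov theorem, as indicated in Remark~4. First I would interchange the order of summation in~(\ref{eq:E}), moving the sum over $k\le M$, the divisors $d\mid k$, and the arithmetic factors $\delta(q,k,d,\psi)$ to the outside, so that for each modulus $q>1$ and each primitive character $\psi$ modulo $q$ the innermost object is a character sum
\[
\sum_{m\le kT/(2\pi)} a(m)\,\psi(m/d),\qquad a=\Lambda*\log*x .
\]
For primitive $\psi$ one has $|\tau(\overline\psi)|=q^{1/2}$, and a direct estimate of~(\ref{eq:delta}) gives $|\delta(q,k,d,\psi)|\ll \tau(k)\,\phi(k/(d,k/q))^{-1}$; carrying out the sums over $k$ and $d$ with the weights $y_k/k$ then leaves $q^{1/2}$ times quantities controlled, up to $T^\epsilon$, by the norms in~(\ref{eq:divbds}) in the divisor case and by~(\ref{eq:f2n}) and~(\ref{eq:resb2}) in the resonator case. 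Hence the whole problem reduces to bounding a weighted sum of $q^{1/2}\,|\sum_m a(m)\psi(m/d)|$ over primitive $\psi$ and over $q\le M$.

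Next I would strip the von Mangoldt factor from $a(m)$ by Vaughan's identity (Heath--Brown's identity would serve equally well), decomposing each character sum into $O((\log T)^{O(1)})$ pieces, each of them either a \emph{Type I} linear sum or a \emph{Type II} bilinear sum in which $\psi$ appears only through $\psi(\ell n)=\psi(\ell)\psi(n)$. The additional convolution factors $\log$ and $x$ are harmless: they only modify the coefficient sequences, whose $\ell^2$ norms remain $\ll T^\epsilon$ by~(\ref{eq:divbds}), resp.~(\ref{eq:resb2}), while the $\ell^\infty$ norm is $\ll T^\epsilon$ in the divisor case and only $\ll M^{1/2+\epsilon}$ in the resonator case by~(\ref{eq:finf}).

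I would then split the $q$-range at a parameter $D$. For $q\le D$ I use cancellation from a zero-free region. In the divisor case the Siegel--Walfisz theorem gives $\sum_{m\le Y}a(m)\psi(m/d)\ll_B Y\exp(-c\sqrt{\log Y})$ uniformly for $q\le(\log Y)^B$, so taking $D=(\log T)^{B}$ with $B=B(A')$ large produces a contribution $\ll_{A'} T(\log T)^{-A'}$. In the resonator case this range is too short to beat the mean-square size $\exp((0.5+o(1))\Lom)$ of the coefficients, so I invoke the large zero-free region conjecture, which widens the zero-free region enough to take $D=\exp(c\log T/\log\log T)$ with a savings of $\exp(-c'\log T/\log\log T)$ in the character sum; choosing $c$ (hence the constant $c_0$ in the conjecture) large enough yields the stated bound $T\exp(-c_5\log T/\log\log T)$. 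For $D<q\le M$ I apply the multiplicative large sieve inequality to the Type I and Type II pieces, using the Gauss-sum weights $q^{1/2}$ and Lemma~\ref{sidiv} for the shortest Type I ranges; since $M=T^\theta$ and the $m$-sums have length $\le MT/(2\pi)$, a careful estimation (retaining the weights $y_k/k$) gives the power term $T^{3/4+\theta/2+\epsilon}$ in the divisor case. In the resonator case the factor $M^{1/2+\epsilon}$ lost on the shortest range of each of the two coefficient sequences $x_n=y_n=f(n)$ enlarges this to $T^{3/4+3\theta/2+\epsilon}$. The hypotheses $\theta<1/2$ and $\theta<1/6$ are precisely what force these power terms to be $o(T)$.

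The main obstacle is part~(ii): an ordinary power-of-logarithm saving, which is all that Siegel--Walfisz and the classical large sieve variance bounds supply, is annihilated by the factor $\exp((0.5+o(1))\Lom)$ that appears in $\|f^2\|_1$ and in $\sum_{n\le M}f(n)^2/n$, so one must propagate the \emph{exponential} saving coming from the large zero-free region conjecture uniformly through Vaughan's identity and the large sieve, keeping track of every implied constant and fixing $c_0$ accordingly. The divisor case~(i) is lighter but still requires care in the reduction~(\ref{eq:emk})--(\ref{eq:nonp}) to primitive characters, so that at most $T^\epsilon$ is lost in the factors $\tau(\overline\psi)$ and $\delta(q,k,d,\psi)$ before the large sieve is brought to bear.
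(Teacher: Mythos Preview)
Your high-level strategy matches the paper's exactly: split the $q$-range at a threshold $\eta$, handle small $q$ via zero-free regions (Siegel--Walfisz in case~(i), the large zero-free region conjecture in case~(ii)), and handle large $q$ via Vaughan's identity combined with the large sieve; the constraint $\theta<1/2$ (resp.\ $\theta<1/6$) arises precisely as you say, and your diagnosis of why~(ii) needs an exponential rather than a logarithmic saving is spot on.

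The implementation differs in one respect worth noting. You sketch the combinatorial form of Vaughan's identity, breaking the character sum $\sum_m a(m)\psi(m/d)$ into Type~I and Type~II pieces and applying the discrete multiplicative large sieve. The paper instead follows the Gallagher--Vaughan integral approach of~\cite{G},~\cite{V} and~\cite{CGG3}: it applies Perron's formula, writes Vaughan's identity at the level of Dirichlet series as $A=(A-F)(1-LG)+(F-FLG+ALG)$ with $F,G$ partial sums of $A$ and $L^{-1}$, shifts the ``$I$'' piece to $\Re(s)=\tfrac12$, and then invokes the integral large sieve together with the fourth-moment bound $\sum_{q\le z}\frac{q}{\phi(q)}\sum^*_\psi\int|L^{(j)}(\tfrac12+it,\psi)|^4\,dt\ll z^2T^\epsilon$ (Montgomery). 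This route avoids explicit Type~I/Type~II bookkeeping at the cost of needing the $L$-function fourth moment, and it is what produces the clean exponent $\tfrac34+\tfrac{\theta}{2}$. Your combinatorial version should also go through, but be aware that in the resonator case the extra $T^{\theta}$ you attribute to two losses of $M^{1/2+\epsilon}$ enters in the paper as a single factor $\|f\|_\infty^2$ in the Type~II estimate on $\Re(s)=\tfrac12$; tracking it your way requires equal care with the $\ell^\infty$ bound~(\ref{eq:finf}).
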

By~(\ref{eq:S}), Propositions \ref{SR}-\ref{Ebd}, and the bounds for 
the coefficients given in section \ref{coeff}  we obtain
\begin{equation}
\begin{split}
  S & = \frac{T}{2 \pi}
   \sum_{nu \le M} \frac{x_{u}x_{nu}r_0(n)}{nu} 
   -\frac{T}{4 \pi} \sum_{gv \le M} \frac{y_g x_{gv}}{gv}
   R_{2} \left(
   \log(\mbox{$\frac{T}{2 \pi v}$})
   \right)
   \\
  & -
   \frac{T}{2 \pi}  \sum_{{\begin{substack}{u,v \le M
         \\ (u,v)=1}\end{substack}}}
   \frac{c'(u,v)H(M;u,v)}{uv}
   + \mathcal{E}_0 + \mathcal{E} 
    + O(  T^{\epsilon}(||{\bf y}||_{\infty} M
    + ||{\bf y}||_1) )  
   \nonumber
\end{split}
\end{equation}
where $\mathcal{E}_0$ and $\mathcal{E}$ are as in the proceeding propositions. 
Setting $r_1(u,v)=-c'(u,v)$ we see that we obtain the principal
term of Theorem \ref{thm1}.  By~(\ref{eq:E01}) and~(\ref{eq:divbds}) we obtain
an error term of the form $T(\log T)^{-A'}+ T^{\frac{3}{4}+ \frac{\theta}{2}+\epsilon}$ as asserted.  By~(\ref{eq:E02}) and~(\ref{eq:resb})
we obtain the error term $T \exp \left(
- \frac{c_1 \log T}{\log \log T}
\right) + T^{\frac{3}{4}+ \frac{\theta}{2}+\epsilon}$.
\end{proof}


\section{Evaluation of $S_R$: Proof of Proposition \ref{SR}}

In this section we evaluate the term $S_{R}$.  Recall that
\begin{equation}
  S_{R} = \frac{1}{2 \pi i} \int_{\kappa+i}^{\kappa+iT} \left(
  \frac{\chi'}{\chi}(s)^{2} \zeta(s) - 2 \frac{\chi'}{\chi}(s) \zeta'(s)
  + \frac{\zeta'}{\zeta}(s) \zeta'(s)
  \right) X(s) Y(1-s) \, ds \ .
  \nonumber
\end{equation}
The above integral will be evaluated by considering the more general expression:
\[
    J_k = J_k (T) = \frac{1}{2 \pi i} 
    \int_{\kappa+i}^{\kappa+iT} 
    \left( \frac{\chi'}{\chi}(s) \right)^{k} D(s) X(s) Y(1-s) \, ds 
\]
where $D(s)= \sum_{n =1}^{\infty} \alpha_n n^{-s}$ and $k \in 
\mathbb{Z}_{\ge 0}$.  Suppose that  
$
    \sum_{n=1}^{\infty} |\alpha_n|n^{-\sigma}
    \ll (\sigma-1)^{-\alpha}
$
as $\sigma \to 1$. 
We will establish:
\begin{lem} \label{Jk}
Suppose $|\alpha_n| \ll n^{\epsilon}$ and we have coefficients
$x_n,y_n$ satisfying  $||x_n/n||_{1},
||y_n/n||_{1} \ll T^{\epsilon}$.
 Then for $k \in \mathbb{N}$
\begin{equation*}
    J_k = \frac{(-1)^k TP_k(\Log)}{2 \pi}  
     \sum_{nu \le M} \frac{\alpha_{n}x_{u}y_{nu}}{nu}  
     +  O  \left(   T^{\epsilon}
   (||{\bf y}||_{\infty} M
    + ||{\bf y}||_1)  \
      \right) 
\end{equation*}
where $P_{k}$ is a monic polynomial of degree $k$. 
\end{lem}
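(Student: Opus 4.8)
The plan is to expand every Dirichlet series on the line of integration $\sigma=\kappa=1+\Lo^{-1}$, where everything converges absolutely, and thereby reduce the computation to a single one-dimensional integral. Write $D(s)X(s)=\sum_{m\ge 1}b(m)m^{-s}$ with $b(m)=\sum_{uv=m,\,u\le M}\alpha_v x_u$, a truncated Dirichlet convolution $\alpha*x$. From $|\alpha_v|\ll v^\epsilon$ and $\xinf,\yinf\ll T^\epsilon$ one gets $|b(m)|\ll m^\epsilon$, while the hypothesis $\sum_v|\alpha_v|v^{-\sigma}\ll(\sigma-1)^{-\alpha}$ together with $||x_u/u||_1\ll T^\epsilon$ gives $\sum_m|b(m)|m^{-\kappa}\ll\Lo^\alpha T^\epsilon$. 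Pulling the finite sum $Y(1-s)=\sum_{\ell\le M}y_\ell\ell^{s-1}$ through the integral and using absolute convergence on $\sigma=\kappa$ to interchange sum and integral,
\[
   J_k=\sum_{\ell\le M}\frac{y_\ell}{\ell}\sum_{m\ge 1}b(m)\,I\!\left(\frac{m}{\ell}\right),\qquad
   I(r):=\frac{1}{2\pi i}\int_{\kappa+i}^{\kappa+iT}\left(\frac{\chi'}{\chi}(s)\right)^{k}r^{-s}\,ds,
\]
so the whole lemma reduces to understanding $I(r)$ and then summing over $m$ and $\ell$.

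For $I(r)$ I will use the classical uniform asymptotic $\frac{\chi'}{\chi}(\kappa+it)=-\log(t/2\pi)+O(1/t)$ for $1\le t\le T$, together with $\frac{d}{dt}\big(\frac{\chi'}{\chi}(\kappa+it)\big)^{k}\ll(\log t)^{k-1}/t$. When $r=1$ the integrand does not oscillate, so
\[
   I(1)=\frac{(-1)^k}{2\pi}\int_1^T\big(\log(t/2\pi)\big)^k\,dt+O\!\left(\int_1^T\frac{(\log t)^{k-1}}{t}\,dt\right)=\frac{(-1)^k T}{2\pi}P_k(\Log)+O\big((\log T)^k\big),
\]
where $P_k$ is fixed by $\int_1^T(\log(t/2\pi))^k\,dt=T(\Log)^k-k\int_1^T(\log(t/2\pi))^{k-1}\,dt+O(1)$ and is therefore monic of degree $k$. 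When $r\ne 1$, integration by parts against the oscillatory factor $r^{-it}=e^{-it\log r}$ shows that both the boundary terms and the remaining integral of the derivative are $\ll(\log T)^k/|\log r|$, so $I(r)\ll r^{-\kappa}(\log T)^k|\log r|^{-1}$.

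Now I assemble. The diagonal term $m=\ell$ contributes $\frac{(-1)^k T P_k(\Log)}{2\pi}\sum_{\ell\le M}\frac{b(\ell)y_\ell}{\ell}+O\!\left((\log T)^k\sum_{\ell\le M}\frac{|b(\ell)|\,|y_\ell|}{\ell}\right)$; since $u\mid\ell\le M$ automatically forces $u\le M$, the truncation defining $b(\ell)$ is vacuous, whence $\sum_{\ell\le M}b(\ell)y_\ell/\ell=\sum_{nu\le M}\alpha_n x_u y_{nu}/(nu)$, the asserted main term, and the error here is $\ll T^\epsilon\yone$. For the off-diagonal range $m\ne\ell$ I use $|I(m/\ell)|\ll(\ell/m)^\kappa(\log T)^k/|\log(m/\ell)|$ and $|\log(m/\ell)|\gg\min\big(1,|m-\ell|/\ell\big)$: when $m\le\ell/2$ or $m\ge 2\ell$ one has $|\log(m/\ell)|\gg 1$ and $\sum_m(\ell/m)^\kappa|b(m)|\ll\ell^\kappa\sum_m|b(m)|m^{-\kappa}\ll\ell\,T^\epsilon$ (using $\ell^{1/\Lo}\ll M^{1/\Lo}\ll 1$); when $\ell/2<m<2\ell$, $m\ne\ell$, one bounds $|\log(m/\ell)|^{-1}\ll\ell/|m-\ell|$, $|b(m)|\ll T^\epsilon$, and $\sum_{1\le|m-\ell|<\ell}|m-\ell|^{-1}\ll\log T$, again giving $\ll\ell\,T^\epsilon$. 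Summing, the total off-diagonal contribution is $\ll(\log T)^k T^\epsilon\sum_{\ell\le M}|y_\ell|\ll T^\epsilon\yone$, and collecting pieces yields $J_k=\frac{(-1)^k T P_k(\Log)}{2\pi}\sum_{nu\le M}\frac{\alpha_n x_u y_{nu}}{nu}+O(T^\epsilon\yone)$, which is absorbed by the claimed error $O\big(T^\epsilon(\yinf M+\yone)\big)$.

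I expect the off-diagonal estimate to be the main obstacle: an individual integral $I(m/\ell)$ with $m/\ell$ extremely close to $1$ can be as large as $\asymp M(\log T)^k$, so there is no useful term-by-term bound, and the required saving appears only after the sum over $m$, where $\sum 1/|m-\ell|$ is merely logarithmic. The remaining points are routine: one must check that the $O(1/t)$ error in the $\chi'/\chi$ asymptotic is uniform down to $t=1$ and contributes only $O((\log T)^k)$ to each $I(r)$, hence $O(T^\epsilon\yone)$ after summation; and the monicity of $P_k$ follows simply by tracking the leading term $\int_1^T(\log(t/2\pi))^k\,dt=T(\Log)^k+O\big(T(\log T)^{k-1}\big)$.
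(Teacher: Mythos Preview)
Your approach is the same as the paper's: replace $(\chi'/\chi)^k$ by $(-\log(t/2\pi))^k$ using the standard asymptotic, expand into a triple sum, split into diagonal $m=\ell$ and off-diagonal $m\ne\ell$, and bound the latter via $|\log r|^{-1}$. The only organizational difference is that you package $D(s)X(s)$ into a single Dirichlet series with coefficients $b(m)$ and keep $\ell$ (from $Y$) as the outer variable, whereas the paper keeps $n,u,v$ separate and, in the off-diagonal, fixes $h=nu$ and sums over $v\le M$.

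One point to flag. Your near-diagonal estimate uses the pointwise bound $|b(m)|\ll m^{\epsilon}$, which you derive from $\|{\bf x}\|_{\infty}\ll T^{\epsilon}$. That bound is \emph{not} among the lemma's stated hypotheses (only $\|x_n/n\|_1,\|y_n/n\|_1\ll T^{\epsilon}$ are assumed). This is precisely why the paper, summing the other way, instead extracts $\|{\bf y}\|_{\infty}$ in the near-diagonal range and ends up with the term $\|{\bf y}\|_{\infty}M$ in the error. In the divisor-case applications your extra assumption does hold, and then your argument actually yields the sharper error $O(T^{\epsilon}\|{\bf y}\|_1)$; but as a proof of the lemma exactly as stated you should either add $\|{\bf x}\|_{\infty}\ll T^{\epsilon}$ to the hypotheses or, in the range $\ell/2<m<2\ell$, reverse the roles (fix $m$ and sum over nearby $\ell$, bounding $|y_\ell|\le\|{\bf y}\|_{\infty}$), which recovers the paper's $\|{\bf y}\|_{\infty}M$ term.
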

\begin{proof}[Proof of Proposition \ref{SR}]
By  our expression for $S_R$ above it suffices to apply Lemma \ref{Jk} in the cases
$k=2, \alpha_n=1$,
 $k=1, \alpha_n= -(\log n)$, and $k=0, \alpha_n = (\Lambda*\log)(n)$. 
Thus
\[
   S_{R}  
   \sim  \frac{T}{2 \pi} \sum_{nu \le M} \frac{x_u y_{nu}}{nu}
   \left(
   P_{2}(\Log) -2P_{1}(\Log)(\log n)+  (\Lambda*\log)(n)
   \right) 
\]
with an error $ O(T^{\epsilon}( ||{\bf y}||_{\infty} M
    + ||{\bf y}||_1 )$ as claimed.  
\end{proof}

\begin{proof}[Proof of Lemma \ref{Jk}]
We have the estimate $\frac{\chi'}{\chi}(s) = - \log \frac{|t|}{2 \pi} + O(1/(1+|t|))$  valid for $1/2 \le \sigma \le 2$ and $t \ge 1$.   Thus
\[
    J_k = \frac{1}{2 \pi i} \int_{\kappa+i}^{\kappa+iT}
    ( - \log(t/2 \pi))^{k} + O_{k} \left( \Lo^{k-1} t^{-1} \right))
    D(s)X(s)Y(1-s) \, ds 
   \ . 
\]
One checks that the error term contributes 
$\ll T^{\epsilon} ||{\bf y}||_1 $. 
Exchanging summation and integration order yields
\[
   J_k = \sum_{n,u,v}  
   \frac{\alpha_{n} x_{u}y_{v} (-1)^k}{n^{\kappa}u^{\kappa} v^{1-\kappa} 2 \pi }
   \int_{1}^{T}
   (\log(t/2 \pi))^k \left( \frac{v}{nu} \right)^{it} dt
   + O \left(
  T^{\epsilon}
|| {\bf y} ||_1
   \right)
\]
where $s = \kappa +it$. 
We now  write
$J_{k}=J_{d}+ J_{nd}$ where $J_d$ consists of the 
the diagonal terms $v=nu$ and $J_{nd}$ consists of the terms
$v \ne nu$.  We have
\[
  J_{d} = \frac{(-1)^k}{2 \pi}
   \sum_{nu \le M} \frac{\alpha_{n}x_{u}y_{nu}}{nu} 
   \int_{1}^{T} \log^{k}(t/2 \pi) \, dt \ . 
\]
It is simple to see that
$
    \int_{1}^{T} \log^{k} \left( \frac{t}{2 \pi} \right) \, dt 
    = T P_{k} \left( 
    \Log
    \right) + O_{k}(1)
$
where $P_k$ is monic of degree $k$.  Moreover, 
since $|\alpha_n|,|| \frac{x_n}{n}||_1,
 || \frac{y_n}{n}||_1 \ll T^{\epsilon}$ we have
\[
    J_{d} = \frac{(-1)^kT P_{k}(\Log)}{2 \pi}
   \sum_{nu \le M} \frac{\alpha_{n}x_{u}y_{nu}}{nu} 
    + O \left( T^{\epsilon}  \right) \ . 
\]
The remainder term is 
\begin{equation}
  J_{nd} = \sum_{{\begin{substack}{n,u,v
         \\ v \ne nu}\end{substack}}}
  \frac{\alpha_{n}x_{u}y_{v}}{n^{\kappa}u^{\kappa} v^{1-\kappa}}
   \frac{1}{2 \pi} \int_{1}^{T} \log^{k}(t/2 \pi) \left( \frac{v}{nu} \right)^{it} dt
   \ . 
   \nonumber
\end{equation}
For $v \ne nu$ the integral is $ \ll  \Lo^{k}(|\log \frac{v}{nu}|)^{-1}$
and hence 
\[
   J_{nd} \ll \Lo^{k+\alpha} ||x_n/n||_{1}
         \sum_{{\begin{substack}{v \le M
         \\ v \ne nu}\end{substack}}} \frac{|y_v|}{v^{1-\kappa} |\log \left( \frac{v}{nu} \right)|} \ . 
\]
Since $\kappa=1+O(\Lo^{-1})$ it suffices to bound
$
  S(h) =  \sum_{{\begin{substack}{v \le M
         \\ v \ne h}\end{substack}}} |y_v||\log \frac{v}{h}|^{-1}
$.  If $h \ge 1.1M$, we have $S(h) \ll  ||{\bf y}||_1$.  We now suppose $h < 1.1M$.
The contribution to $S(h)$ from those $v \ge 1.5h$ and $v \le 0.5h$ is bounded
by $||{\bf y}||_1$.   Consider the interval $I=[0.5j,1.5j] \cap [1,M]$.
For those integers $k$ not in this interval we obtain 
\begin{equation}
   \sum_{k \notin I} \frac{|y_{k}|}{|\log(k/j)|}
   \ll  \sum_{k \le j/2} |y_{k}| + \sum_{3j/2 \le k \le M}  |y_{k}|
   \ll  ||{\bf y}||_{1} \ . 
   \nonumber
\end{equation}    
Either $I=[j/2,3j/2], [1,3j/2],[j/2,M]$.  In the first case 
\begin{equation}
\begin{split}
    & \sum_{k \in I} \frac{|y_k|}{|\log(j/k)|}
    =  \sum_{s=1}^{j/2} \frac{|y_{j-s}|}{|\log(j/(j-s))|}+
     \sum_{s=1}^{j/2} \frac{|y_{j+s}|}{|\log(j/(j+s))|} \\
    & \ll j\sum_{s=1}^{j/2} \frac{|y_{j-s}|}{s}
    + j \sum_{s=1}^{j/2} \frac{|y_{j+s}|}{s} 
    \ll ||{\bf y}||_{\infty} j (\log j) \ . 
    \nonumber
\end{split}
\end{equation}
The argument for the second and third cases is analogous.  We deduce 
$J_{nd} \ll   T^{\epsilon} (||{\bf y}||_{\infty} M
    + ||{\bf y}||_1 ) $ and thus the lemma is established. 
\end{proof}
\section{Preliminary lemmas}

In the section we prove several lemmas that will aid us in
evaluating $\mathcal{M}_0$ and bounding $\mathcal{E}$. 
Lemmas \ref{conv} and \ref{shu} will be applied in our evaluation of 
$\mathcal{M}_0$.  Lemmas \ref{conv}, \ref{shiu}, \ref{delta}, and \ref{theta} shall be invoked when we bound $\mathcal{E}$.   
The next lemma tells 
us how to decompose an arithmetic function which is the convolution of 
other arithmetic functions. 
\begin{lem} \label{conv}
Let $f_{1}, f_{2}, \ldots, f_{j}$ be arithmetic functions and let $D \in 
\mathbb{N}$.   Given a decomposition $D=d_1 d_2 \cdots d_j$ we define 
the integers $D_{i} = \prod_{u=1}^{j-i} d_u$ for $1 \le i \le j-1$ and
$D_j=1$. 
We have the identities
\begin{equation}
\begin{split}
   & 
      \sum_{{\begin{substack}{m \le X
         \\ (m,k)=1}\end{substack}}}
    (f_{1}*f_{2}* \cdots* f_{j})(mD)  \\
    & =
    \sum_{d_1 d_2 \cdots d_j = D}
        \sum_{{\begin{substack}{m_1 m_2 \cdots m_j \le X
         \\ (m_i,kD_i)=1}\end{substack}}}
        f_1(m_1 d_{j}) f_2 (m_2 d_{j-1}) \cdots
    f_j (m_j d_1)  \ , 
    \label{eq:id1}
\end{split}
\end{equation}
\begin{equation}
   \sum_{(m,k)=1} 
   \frac{ (f_{1}*f_{2}* \cdots* f_{j})(mD) }{m^s} 
   =  \sum_{d_1 d_2 \cdots d_j = D}
   \prod_{i=1}^{j} \sum_{(m_i,kD_i)=1} \frac{f_{i}(m_id_{j-i})}{m_{i}^{s}}
   \ . 
   \label{eq:id2}
\end{equation}
\end{lem}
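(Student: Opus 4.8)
The plan is to prove~\eqref{eq:id1} first and then deduce~\eqref{eq:id2} by an almost identical argument (or by attaching $m_i^{-s}$ weights and summing). I would argue by induction on $j$, the number of factors in the convolution. For $j=1$ there is nothing to prove: the only decomposition is $D=d_1$, $D_1=1$ by convention, and both sides are $\sum_{m \le X,\,(m,k)=1} f_1(mD)$. For the inductive step, write $f_1 * f_2 * \cdots * f_j = f_1 * g$ where $g = f_2 * \cdots * f_{j-1} * f_j$ is a convolution of $j-1$ functions, so
\[
   (f_1 * f_2 * \cdots * f_j)(mD)
   = \sum_{ab = mD} f_1(a)\, g(b) \ .
\]
The key combinatorial point is to sort this sum by $d_1 := (a, D)$-type data: more precisely, for each divisor $d_1 \mid D$ one isolates the part of $a$ that divides $D$. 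Writing $D = d_1 D'$ with $D' = d_2 \cdots d_j = D_{1}$ in the notation of the lemma, one decomposes $a = m_1 d_1$ where $m_1$ is coprime to an appropriate modulus, and $b$ then ranges over multiples of $D'$. After this split, the inner sum over $b$ is precisely a sum of the shape handled by the inductive hypothesis applied to $g$ with the divisor $D' = D_1$ and the modulus $k D_1$ (note $D_i$ for $g$'s problem is exactly $D_{i}$ here for $i \ge 1$, since $D_i = \prod_{u=1}^{j-i} d_u$ only involves $d_1, \ldots, d_{j-i}$ and shifting indices matches up).

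The one genuine subtlety — and the step I expect to be the main obstacle — is the bookkeeping of the coprimality conditions $(m_i, k D_i)=1$, making sure the Möbius-type disentangling of $a=m_1 d_1$ is done so that no divisor is double-counted and so that the coprimality constraint inherited by the sub-problem for $g$ is exactly $(m,\,kD_1)=1$ rather than something weaker or stronger. Concretely, when I write $a = m_1 d_1$ with $d_1 \mid D$, I must insist that $m_1$ shares no factor with $D_1 = D/d_1$ (otherwise a prime could be assigned to $d_1$ or to $m_1$ ambiguously), and the original condition $(m,k)=1$ with $m = ab/D$ forces $(m_1, k)=1$; together these give $(m_1, kD_1)=1$, which is the $i=1$ instance of the claimed condition. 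Then $b = m' D_1$ with $(m', kD_1)=1$, and the induction hypothesis for $g$, $D_1$, modulus $kD_1$ produces the remaining factors $f_2(m_2 d_{j-1}) \cdots f_j(m_j d_1)$ with conditions $(m_i, kD_1 D_i')=1$ where $D_i'$ is built from $d_2, \ldots$; one checks $D_1 D_i' = D_i$ so the conditions consolidate correctly. Summing over the choice of $d_1 \mid D$ and combining with the inner decomposition $D_1 = d_2 \cdots d_j$ reconstitutes the full sum over ordered factorizations $D = d_1 d_2 \cdots d_j$, completing the induction.

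For~\eqref{eq:id2}, I would run the same induction but carry the Dirichlet weight: since $a = m_1 d_1$ and $m = m_1 m' $ (roughly), the factor $m^{-s}$ distributes as $m_1^{-s}$ times the weight for the sub-problem, the constraint $m_1 \le X$ disappears (the sum is now over all $m$), and the nested sums decouple completely into the product $\prod_{i=1}^{j} \sum_{(m_i, kD_i)=1} f_i(m_i d_{j-i}) m_i^{-s}$. Convergence is not an issue at this formal level; in the applications one always has enough decay. The only thing to double-check is the index on $d_{j-i}$ versus $d_{j-1}, \ldots$ in~\eqref{eq:id1}: in~\eqref{eq:id2} the $i$-th product factor pairs $m_i$ with $d_{j-i}$, and one verifies this matches~\eqref{eq:id1} after relabelling (in~\eqref{eq:id1} the factor $f_i$ is paired with $d_{j+1-i}$, e.g.\ $f_1$ with $d_j$ and $f_j$ with $d_1$; the shift by one is just the placement of the $D_j=1$ convention). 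I would state this index-matching explicitly once and then regard both identities as proved.
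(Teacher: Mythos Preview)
Your approach is essentially the same as the paper's: the paper proves the base case $j=2$ from the identity
\[
    (f_1*f_2)(mD) = \sum_{g \mid D}
      \sum_{\substack{h \mid m \\ (h,\,D/g)=1}} f_1(gh)\, f_2\!\left(\frac{mD}{gh}\right),
\]
identifies $g=d_2$, $D/g=d_1$, $h=m_1$, $m/h=m_2$, and then inducts for $j>2$; your inductive step, peeling off $f_1$ and sorting by the part of its argument dividing $D$, is exactly this identity in disguise.

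One small correction to your bookkeeping: when you peel off $f_1$, the divisor of $D$ attached to it should be labelled $d_j$ (not $d_1$), since the lemma pairs $f_1$ with $d_j$; then the residual divisor is $D/d_j = d_1\cdots d_{j-1} = D_1$, which is what you want for the coprimality condition $(m_1,kD_1)=1$. Your sentence ``$D' = d_2\cdots d_j = D_1$'' has the indices reversed, though you correctly flag the index-matching as something to pin down. Once that relabelling is made, your inductive verification of the coprimality conditions goes through cleanly and matches the paper.
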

\begin{proof}  The proof of this argument follows the proof 
of Lemma 3 of \cite{CGG3} (p. 506).  The case $j=2$ follows from 
the identity
\[
    (f_1*f_2)(mD) = \sum_{g \mid D}
      \sum_{{\begin{substack}{h \mid m \\
        (h,\frac{D}{g})=1}\end{substack}}} f_1(gh) f_2 \left(
        \frac{mD}{gh}
        \right)  \ . 
\]
By making the identifications $g=d_2,\frac{D}{g}=d_1,h=m_1$, and $\frac{m}{h}=m_2$
we obtain~(\ref{eq:id1}), ~(\ref{eq:id2}) for $j=2$.  For $j>2$ the assertion follows by induction.  
\end{proof}
 
We now introduce some arithmetic functions that will appear in our
evaluation of $\mathcal{M}_0$.  We define $\eta_1(k)= \sum_{p \mid k} \frac{\log p}{p-1}$, $\eta_2(k)
= -\sum_{p \mid k} \frac{p \log p}{(p-1)^2}$, 
\begin{equation}
\begin{split}
    g (h,k)   & =  \sum_{{\begin{substack}{a \mid h, a=p^t
         \\ (a,k)=1}\end{substack}}} \frac{\Lambda(a) \log p}{p-1} \ . 
     \label{eq:gj}  
\end{split}
\end{equation}
Moreover, we define arithmetic functions $\phi_{j}(n)$ for $j=1 \ldots
4$ as follows:
\begin{equation}
\begin{split}
    \phi_1 (n)  = \sum_{k \mid n} \mu(k) \eta_1 (k)  \ , \
   & \phi_2 (n)  = \sum_{k \mid n} \mu(k) \eta_2 (k)  \ , \\
    \phi_3 (n)  = \sum_{k \mid n} \mu(k)  g(h,k)  \ ,  \ 
   &  \phi_4(n)  = \sum_{k \mid n} \mu(k) \eta_1(k) \log k  \  .  
    \label{eq:phij}
\end{split}
\end{equation}
We prove that the $\phi_j$ are supported on integers $n$ such that 
$\omega(n) \le 2$. 
\begin{lem} \label{arith}
$\phi_1, \phi_2, \phi_3$ are supported only on the prime powers and
are given by:
\begin{equation}
\begin{split}
   & \phi_1(p^{\alpha})= - \frac{\log p}{p-1} \ , \
   \phi_2(p^{\alpha}) = \frac{p \log p}{(p-1)^2} \ , \ 
  \phi_3(p^{\alpha}) = \frac{\log p}{p-1} \log(p^{\alpha})
  \nonumber
\end{split}
\end{equation}
for $\alpha \ge 1$.  However, $\phi_4$ is supported 
on those $n$ with $\omega(n) \le 2$ and is given by
\begin{equation}
\begin{split}
    \phi_4(p^{\alpha} q^{\beta})
    & = (\log p)(\log q) \left( 
    \frac{1}{p-1} + \frac{1}{q-1}
    \right)  \ , \
    \phi_4(p^{\alpha}) = - \frac{(\log p)^2}{p-1}  \\
        \nonumber       
\end{split}
\end{equation}
where $\alpha,\beta \ge 1$. 
\end{lem}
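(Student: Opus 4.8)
The plan is to use the fact that $\eta_1$, $\eta_2$ and $g(n,\cdot)$ are all built up additively from the primes dividing their argument, so that after interchanging the order of summation each $\phi_j$ collapses to a prime sum weighted by an inner M\"obius sum of the shape $\sum_{k\mid n,\, p\mid k}\mu(k)$ or $\sum_{k\mid n,\, p\nmid k}\mu(k)$. So the first step is to record two elementary identities: for a prime $p$ and $n\ge1$,
\[
   \sum_{\substack{k\mid n\\ p\nmid k}}\mu(k)=\sum_{k\mid n^{(p)}}\mu(k)=
   \begin{cases}1&\text{if $n$ is a power of $p$,}\\0&\text{otherwise,}\end{cases}
   \qquad
   \sum_{\substack{k\mid n\\ p\mid k}}\mu(k)=
   \begin{cases}-1&\text{if $n=p^{\alpha}$, $\alpha\ge1$,}\\0&\text{otherwise,}\end{cases}
\]
where $n^{(p)}$ is the largest divisor of $n$ coprime to $p$ (the first by collapsing the divisor sum to $n^{(p)}$, the second by writing $k=p\ell$ and invoking the first), together with the two-prime analogue $\sum_{k\mid n,\, pq\mid k}\mu(k)=1$ if $n=p^{\alpha}q^{\beta}$ with $\alpha,\beta\ge1$ and $0$ otherwise, proved by writing $k=pq\ell$.

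Next I would dispatch $\phi_1,\phi_2,\phi_3$ uniformly. Writing $\eta_i(k)=\sum_{p\mid k}w_i(p)$ with $w_1(p)=\tfrac{\log p}{p-1}$ and $w_2(p)=-\tfrac{p\log p}{(p-1)^2}$, interchanging the $k$- and $p$-sums in $\phi_i(n)=\sum_{k\mid n}\mu(k)\eta_i(k)$ gives $\phi_i(n)=\sum_{p\mid n}w_i(p)\sum_{k\mid n,\,p\mid k}\mu(k)=-\sum_{p\mid n}w_i(p)\,[\,n\text{ a power of }p\,]$, which yields at once that $\phi_i$ is supported on prime powers with $\phi_1(p^{\alpha})=-\tfrac{\log p}{p-1}$ and $\phi_2(p^{\alpha})=\tfrac{p\log p}{(p-1)^2}$. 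For $\phi_3$ the only new observation is that the condition $(a,k)=1$ on a prime power $a=p^t$ says precisely $p\nmid k$; the same interchange then gives a sum $\sum_{p\mid n}(\cdots)\sum_{k\mid n,\,p\nmid k}\mu(k)$, the inner sum forces $n=p^{\alpha}$, and for such $n$ (with $p\nmid k$) one has $\sum_{t\le\alpha}\tfrac{\Lambda(p^t)\log p}{p-1}=\tfrac{\log p}{p-1}\log(p^{\alpha})$, as claimed.

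The function $\phi_4$ is the only one requiring a little extra care, because of the factor $\log k$. Since $\mu(k)\neq0$ forces $k$ squarefree, on such $k$ we may write $\log k=\sum_{q\mid k}\log q$, so that $\eta_1(k)\log k=\sum_{p,q\mid k}\tfrac{(\log p)(\log q)}{p-1}$ is a double prime sum. Interchanging orders gives $\phi_4(n)=\sum_{p,q}\tfrac{(\log p)(\log q)}{p-1}\sum_{k\mid n,\,p\mid k,\,q\mid k}\mu(k)$, which I would split into the diagonal $p=q$ (inner sum $-[\,n=p^{\alpha}\,]$, contributing $-\tfrac{(\log p)^2}{p-1}$ when $n$ is a prime power) and the off-diagonal $p\neq q$ (inner sum $[\,n=p^{\alpha}q^{\beta}\,]$ by the two-prime identity); summing the two ordered pairs $(p,q)$ and $(q,p)$ in the latter produces $(\log p)(\log q)\bigl(\tfrac1{p-1}+\tfrac1{q-1}\bigr)$. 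Reading this off, $\phi_4$ vanishes unless $\omega(n)\le2$ and takes exactly the stated values. There is no genuine obstacle here — the whole lemma is bookkeeping with M\"obius sums — the one point to watch being that only squarefree $k$ contribute, which is precisely what legitimises replacing $\log k$ by $\sum_{q\mid k}\log q$ and hence the doubling of the prime sum in the $\phi_4$ computation.
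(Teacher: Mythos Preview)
Your proof is correct and takes a genuinely different route from the paper's. You work directly from the definitions by interchanging the $k$-sum with the prime sums hidden in $\eta_1,\eta_2,g$ and $\log k$, reducing everything to the elementary M\"obius identities $\sum_{k\mid n,\,p\mid k}\mu(k)$, $\sum_{k\mid n,\,p\nmid k}\mu(k)$, $\sum_{k\mid n,\,pq\mid k}\mu(k)$; the paper instead computes the Dirichlet generating series $A(s)=\sum_n\phi_4(n)n^{-s}$, manipulates it into the shape $\sum_{p^j}f(p)\mu(p)\log(p^j)p^{-js}+\bigl(\sum_{p^j}f(p)p^{-js}\bigr)\bigl(-\zeta'(s)/\zeta(s)\bigr)$, and then reads off the coefficients by recognising the second term as a convolution $\theta*\Lambda$. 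Your combinatorial argument is shorter and more transparent for this lemma (indeed the paper itself remarks that the $\phi_i$ ``may be proved directly from their definitions''); the generating-function approach makes the link to $\zeta'/\zeta$ explicit and would generalise more smoothly to higher variants such as $\sum_{k\mid n}\mu(k)\eta_1(k)(\log k)^m$, but for the four functions at hand your method is the cleaner one. One small point worth spelling out in your $\phi_3$ step: the condition $a=p^t\mid h$ with $h=n/k$ and $p\nmid k$ is equivalent simply to $p^t\mid n$, independently of $k$, which is what lets the $t$-sum factor out of the inner M\"obius sum --- this is implicit in what you wrote but deserves a sentence.
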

\begin{proof}  
These formulae for the $\phi_i$
may be proved directly from their definitions, however it is simpler to employ
generating functions.  Put $A(s) = \sum_{n=1}^{\infty} \phi_4(n)
n^{-s}$ where 
$
   \phi_4(n) = \sum_{hk=n} \mu(k) (\sum_{p \mid k} f(p)) \log k 
$
and $f(x)= \frac{\log x}{x-1}$.  We have
\begin{equation}
\begin{split}
  A(s) & = \sum_{h,k \ge 1} \frac{\mu(k) \log k}{h^s k^s}
  \sum_{p \mid k} f(p) 
  = \sum_{p} f(p) 
  \sum_{h=1}^{\infty} \frac{1}{h^s}
  \sum_{p \mid k} \frac{\mu(k) \log k}{k^s} \\
  & = \sum_{p} \frac{f(p) \mu(p)}{p^s} 
  \zeta(s) \sum_{(k_{1},p)=1} \frac{\mu(k_1)  \log(pk_1)}{k_{1}^s}  \\
  & = \sum_{p} \frac{f(p) \mu(p)}{p^s} 
 \zeta(s) 
 \left(
 \frac{\log p}{1-\frac{1}{p^s}} \zeta(s)^{-1}
 +  \sum_{(k_{1},p)=1} \frac{\mu(k_1)  \log(k_1)}{k_{1}^s}  
 \right) \ . 
 \nonumber 
\end{split}
\end{equation}
It may be verified that
\[
     \sum_{(k_{1},p)=1} \frac{\mu(k_1)  \log(k_1)}{k_{1}^s}  
     = (1-1/p^{s})^{-2} \frac{\log p}{p^s} \zeta(s)^{-1}
     + (1-1/p^{s})^{-1} \frac{\zeta'(s)}{\zeta(s)^2} 
\]
and thus
\begin{equation}
\begin{split}
  & A(s) = \sum_{p} \frac{f(p) \mu(p)}{p^s} 
  \left(
   \frac{\log p}{(1-1/p^{s})}
  + \frac{\log p}{p^s(1-1/p^s)^{2}}
  + \frac{1}{(1-1/p^{s})} \frac{\zeta'(s)}{\zeta(s)} 
  \right)  \\
  & = \sum_{p^j; j \ge 1} \frac{f(p) \mu(p) \log p}{p^{js}}
  + \sum_{p^j; j \ge 2} \frac{f(p) \mu(p) \log p (j-1)}{p^{js}} 
  + \sum_{p^j} \frac{f(p) \mu(p)}{p^{js}} \frac{\zeta'(s)}{\zeta(s) } \\
  & = \sum_{p^j} \frac{f(p) \mu(p) \log(p^j)}{p^{js}}
    + \left( \sum_{p^j} \frac{f(p)}{p^{js}} \right)
    \left( - \frac{\zeta'(s)}{\zeta(s) }  \right) \ . 
    \nonumber
\end{split}
\end{equation}
Let $\theta(n)$ be supported on prime powers defined by 
$\theta(p^j) = f(p)$.  We write 
\[
    \left( \sum_{p^j} \frac{f(p)}{p^{js}} \right)
    \left(  - \frac{\zeta'(s)}{\zeta(s) }  \right)
     = \sum_{n \ge 1} \frac{t(n)}{n^s}
\]
with $t(n) = (\theta*\Lambda)(n)$.  It follows easily that 
$\phi_4$ is supported on those $n$ such that $\omega(n) \le 2$. 
We begin
by supposing that $\omega(n)=2$ and $n=p^{\alpha} q^{\beta}$.
Thus 
\begin{equation}
  t(p^{\alpha} q^{\beta}) = \sum_{uv=p^{\alpha} q^{\beta}}
  \Lambda(u) \theta(v) 
  =  (\log p) f(q) 
  + (\log q) f(p) \ .
  \label{eq:t1}
\end{equation}
Next for $n=p^{\alpha}$
\begin{equation}
  t(p^{\alpha}) 
  = \sum_{uv = p^{\alpha}} \Lambda(u) \theta(v) 
  =  \sum_{i+j=\alpha, i,j \ge 1} \log p f(p) 
   = (\alpha-1) f(p) (\log p) \ . 
    \label{eq:t2}
\end{equation}
Since $\phi_4(n)=f(n) \mu(n) \log(n) + t(n)$ the result follows from 
(\ref{eq:t1}) and (\ref{eq:t2}). 
\end{proof}

The next result provides an estimate for divisor sums in short intervals.
This is Theorem 2 of \cite{Sh}. 
\begin{lem} \label{sidiv}
Let $\alpha, \beta$ be real numbers and let $a,q,k$ be integers.
Suppose that $0< \alpha ,\beta < 1/2$, $k \ge 2$, and $(a,k)=1$.
We have as $t \to \infty$,
\[
      \sum_{{\begin{substack}{t-u \le n \le t
         \\ n \equiv a (q)}\end{substack}}}
   \tau_k (n) \ll \frac{u}{q}
     \left(\frac{\phi(q)}{q}\log t \right)^{k} 
\]
uniformly in $a,q$, and $t$ provided that $q < t^{1-\alpha}$
and $t^{\beta} < u \le t$. 
\end{lem}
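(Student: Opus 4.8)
This is the special case $f = \tau_k$ of Shiu's Brun--Titchmarsh theorem for multiplicative functions, and the plan is to carry out Shiu's argument. First I would fix a small constant $\delta = \delta(\alpha,\beta) > 0$, put $z = t^{\delta}$, and write each $n$ in the sum as $n = b\ell$, where $b = \prod_{p \le z} p^{v_p(n)}$ is the $z$-smooth part of $n$ and $\ell$ the complementary ($z$-rough) part, so that $P^{-}(\ell) > z$ ($P^{-}$ denoting least prime factor). Since $\tau_k$ is multiplicative, $\tau_k(n) = \tau_k(b)\tau_k(\ell)$; and since $(a,q)=1$ forces $(n,q)=1$ and hence $(b,q)=(\ell,q)=1$, I would sum over the smooth part first:
\[
   \sum_{\substack{t-u \le n \le t\\ n \equiv a\ (q)}} \tau_k(n)
   = \sum_{\substack{b\ z\text{-smooth}\\ (b,q)=1}} \tau_k(b)
     \sum_{\substack{(t-u)/b \le \ell \le t/b\\ \ell \equiv a\overline{b}\ (q),\ P^{-}(\ell) > z}} \tau_k(\ell).
\]

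The key observation is that, in the inner sum, $\ell \le t$ has all its prime factors exceeding $z$, so $\Omega(\ell) < \log t / \log z = 1/\delta$ and hence $\tau_k(\ell) \le k^{\Omega(\ell)} = O_{\delta,k}(1)$: the $\tau_k$-weight on the rough part is simply bounded. Thus the inner sum is $\ll_{\delta,k}$ the number of $z$-rough integers in a fixed residue class modulo $q$ inside an interval of length $u/b$, which I would bound by the fundamental lemma of sieve theory applied to the primes up to $z$:
\[
    \sum_{\substack{(t-u)/b \le \ell \le t/b\\ \ell \equiv a\overline{b}\ (q),\ P^{-}(\ell) > z}} \tau_k(\ell)
    \ \ll_{\delta,k}\ \frac{u/b}{q}\prod_{\substack{p \le z\\ p \nmid q}}\Bigl(1 - \tfrac{1}{p}\Bigr),
\]
uniformly in $b$, in $q$, and in the residue class, provided the interval is not too short relative to a fixed power of $z$. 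This admissibility condition is exactly what the hypothesis $t^{\beta} < u$ delivers (those $b$ so large that $u/b < 1$ contribute at most one term, handled trivially), and it is why $q$ must be at most a fixed power of the interval length. (The stated ``$q < t^{1-\alpha}$'' should be read as $q < u^{1-\alpha}$, which is the form in which Shiu's theorem holds; in the intended application $q = 1$, so the point is moot.) The value of proceeding this way is that $\tau_k$ is never estimated trivially on a large integer --- its smooth part is controlled by a convergent Euler product below, and its rough part has boundedly many prime factors --- so the $t^{\epsilon}$ growth of $\tau_k$ never enters.

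Substituting this back and summing over $b$ against the Euler product
\[
   \sum_{\substack{b\ z\text{-smooth}\\ (b,q)=1}} \frac{\tau_k(b)}{b} = \prod_{\substack{p \le z\\ p \nmid q}}\Bigl(1 - \tfrac{1}{p}\Bigr)^{-k},
\]
the local factors combine to $\prod_{p \le z,\, p \nmid q}(1 - 1/p)^{-(k-1)}$. By Mertens' theorem $\prod_{p \le z}(1 - 1/p)^{-1} \ll_{\delta} \log t$ (using $\log z = \delta \log t$), while $\prod_{p \mid q,\, p \le z}(1 - 1/p)$ agrees with $\phi(q)/q$ up to a factor $O_{\delta}(1)$, since $q$ --- being at most a power of the interval length --- has fewer than $1/\delta$ prime divisors exceeding $z$. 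Collecting these estimates yields a bound $\ll_{\delta,\alpha,k}\ (u/q)\bigl(\tfrac{\phi(q)}{q}\log t\bigr)^{k-1}$, which, as $\tfrac{\phi(q)}{q}\log t \gg 1$, is even slightly stronger than the asserted $(u/q)\bigl(\tfrac{\phi(q)}{q}\log t\bigr)^{k}$. I expect the principal obstacle to be the uniform sieve bound in the display above --- counting $z$-rough integers in arithmetic progressions in short intervals with the required uniformity in $q$ --- together with the bookkeeping of the local factors at primes dividing $q$; these are the technical points executed in \cite{Sh}.
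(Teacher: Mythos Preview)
The paper does not prove this lemma at all: it simply quotes it as Theorem~2 of Shiu~\cite{Sh}. Your proposal is a faithful outline of Shiu's argument (smooth/rough factorisation, bounded $\tau_k$ on the rough part, fundamental lemma of the sieve on progressions in short intervals, Mertens on the smooth Euler product), so in that sense you have supplied more than the paper does. Both of your side observations are correct: Shiu's bound indeed gives the exponent $k-1$ rather than $k$, and his hypothesis is $q < u^{1-\alpha}$ rather than $q < t^{1-\alpha}$; since every application here has $q=1$, neither discrepancy matters. The one place your sketch is genuinely thinner than Shiu's proof is the intermediate range of $b$ where $u/b$ is positive but too small for the sieve level $z$ to be admissible --- Shiu treats this by a further case subdivision on the size of the smooth part, which is exactly the ``bookkeeping'' you defer to~\cite{Sh} at the end.
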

We will also require a short interval estimate for $\tau_k*x$ where
$x$ is an arbitrary arithmetic function.
\begin{lem} \label{shiu}
Let $T \ll w \ll T^2$ and $M \le \sqrt{T}$. 
Let $x$ be an arithmetic function supported on $[1,M]$. Then 
\[
    \sum_{t-u \le n \le t} (\tau_{k}*x)(n)
    \ll u (\log t)^{k-1} ||x_n/n||_{1}
\]
where $u=wU^{-1}$, $\exp(c \sqrt{\log w}) \le U \le \exp(\frac{2\log w}{\log \log w})$ and 
$
   \frac{w}{2} \le  t-u \le t \le w 
$.
\end{lem}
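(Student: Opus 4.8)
The plan is to reduce the short-interval estimate for $\tau_k * x$ to the classical short-interval divisor bound of Shiu (Lemma \ref{sidiv}) by exploiting the support condition $x_n = 0$ for $n > M$ with $M \le \sqrt{T}$. First I would write, for $n$ in the window $t - u \le n \le t$,
\[
   (\tau_k * x)(n) = \sum_{\substack{dm = n \\ m \le M}} x_m \, \tau_k(d),
\]
and interchange the order of summation to obtain
\[
   \sum_{t-u \le n \le t} (\tau_k * x)(n)
   = \sum_{m \le M} x_m \sum_{\substack{t/m - u/m \le d \le t/m}} \tau_k(d).
\]
The inner sum runs over an interval of length $u/m$ around the point $t/m$.

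The key step is to verify that Lemma \ref{sidiv} applies to each inner sum with modulus $q = 1$ (so $a = 1$, trivially coprime; one may also note that Shiu's theorem is usually stated for $q \ge 1$ with the obvious reading when $q = 1$). For this I need the length $u/m$ and the location $t/m$ to satisfy the hypotheses $t^{\beta} < u \le t$ and $q < t^{1-\alpha}$. Here the relevant ``$t$'' is $t/m$ and the relevant ``$u$'' is $u/m$; since $m \le M \le \sqrt{T}$ and $u = w U^{-1}$ with $w \asymp T$ (so $w/2 \le t \le w$) and $U \le \exp(2\log w/\log\log w) = w^{o(1)}$, one has $u = w^{1+o(1)}$ and $u/m \ge u/\sqrt{T} = w^{1/2+o(1)} \gg (t/m)^{\beta}$ for any fixed $\beta < 1/2$ once $w$ is large; the upper bound $u/m \le t/m$ is immediate. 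Thus Lemma \ref{sidiv} yields
\[
   \sum_{t/m - u/m \le d \le t/m} \tau_k(d) \ll \frac{u}{m} \bigl(\log(t/m)\bigr)^{k-1} \ll \frac{u}{m} (\log t)^{k-1}.
\]
(One loses one factor of $\log$ relative to the $\tau_k$ in Lemma \ref{sidiv} because the single sum over $d$ is a $\tau_k$-sum, not a $\tau_{k+1}$-sum; more carefully, $\tau_k(d)$ summed over an interval of length $L$ with modulus $1$ is $\ll L (\log)^{k-1}$, which is the $k \to k$, not $k \to k+1$, statement — this is exactly the form of Lemma \ref{sidiv} with $q=1$.)

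Summing over $m$ then gives
\[
   \sum_{t-u \le n \le t} (\tau_k * x)(n)
   \ll u (\log t)^{k-1} \sum_{m \le M} \frac{|x_m|}{m}
   = u (\log t)^{k-1} \, ||x_n/n||_1,
\]
which is the assertion. I expect the main obstacle to be purely bookkeeping: checking that the length $u/m$ still exceeds $(t/m)^{\beta}$ uniformly for all $m \le M$ — i.e. that the short interval does not become ``too short'' for the smallest values of $d$ when $m$ is as large as $M \approx \sqrt{T}$. This is where the constraint $M \le \sqrt{T}$ is used in an essential way, together with $U = T^{o(1)}$ so that $u$ is only shortened by a negligible power; for the very smallest $d$ (when $m$ is close to $M$) one has $t/m$ roughly $\sqrt{T}$ and $u/m$ roughly $\sqrt{T}^{1 - o(1)}$, comfortably larger than $(\sqrt{T})^{\beta}$ for fixed $\beta < 1/2$. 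No genuine difficulty should arise beyond this verification and the routine choice of the admissible exponents $\alpha, \beta$ in Lemma \ref{sidiv}.
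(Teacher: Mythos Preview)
Your proposal is correct and follows essentially the same route as the paper: open the convolution, interchange sums to reduce to $\sum_{m \le M} x_m \sum_{t/m - u/m \le d \le t/m} \tau_k(d)$, verify that $u/m \gg (t/m)^{\epsilon}$ uniformly in $m \le M$ (this is where $M \le \sqrt{T}$ and $U = w^{o(1)}$ enter), apply Lemma~\ref{sidiv} with $q=1$, and sum. Your verification of the short-interval hypothesis is in fact more detailed than the paper's, which simply asserts $\frac{u}{b} \gg (\frac{t}{b})^{\epsilon}$ and moves on; one small slip is that $u/M$ is $\gg w^{1/2 - o(1)}$ rather than $w^{1/2+o(1)}$, but this is harmless for the argument.
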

\begin{proof}
Notice that our sum may be rewritten as 
\[
    \sum_{t-u < n \le t} (\tau_k*x)(n)
    =  \sum_{b \le M} x_{b} \sum_{\frac{t}{b}-\frac{u}{b} \le a \le \frac{u}{b}}
    \tau_{k}(b) \ . 
\]
By our conditions on $u$ and $t$, $\frac{u}{b} \gg \left( \frac{t}{b} \right)^{\epsilon}$ for all $1 \le b \le M$. 
Hence by Lemma \ref{sidiv}
\[
    \sum_{b \le M} x_{b} \sum_{\frac{t}{b}-\frac{u}{b} \le a \le \frac{t}{b}}
    \tau_{k}(a) 
    \ll u (\log t)^{k-1}
    \sum_{b \le M}\frac{x_b}{b}
    \ll u \Lo^{k-1} ||\mbox{$ \frac{x_n}{n}$}||_{1} \ . 
\]
\end{proof}
The next lemma evaluates asymptotically a sum that will appear in 
our evaluation of $\mathcal{M}_0$.  The proof of this lemma is very
similar to the asymptotic evaluation of $\sum_{n \le x} \Lambda(n)$
that occurs in the proof of the prime number theorem. 
\begin{lem} \label{shu}
Let $h,k \in \mathbb{N}$, $h,k \le M$  and $x \ge 1$
such that $\log x \asymp \log T$. 
\begin{equation}
\begin{split}
    \sum_{{\begin{substack}{u \le x
         \\ (u,k)=1}\end{substack}}} (\Lambda*\log)(hu)& = \frac{x \phi(k)}{k}
   \left( 
   \frac{1}{2} (\log x)^2 + 2\log(x/e) \log h + (\Lambda*\log)(h) 
   \right. \\
    &  \left. +(C_{0} - \eta_1(k)) \log(x/e)
    +C_1 \eta_1(k)-\eta_2(k)-g(h,k) \right)  \\
    & + O(\tau(h) x \exp(-C_{2} \sqrt{\log T})) 
    \nonumber
\end{split}
\end{equation}
for some explicit constants $C_{j}$ for $j=0,1,2$ and 
$\eta_1,\eta_2,g_1,g_2$, and $g$  are the arithmetic functions 
defined above.  Note if we further assume that $\zeta(s)$ is 
non-vanishing in the region $\mathrm{Re}(s)
 \ge 1 - \frac{c_0}{\log \log (|\mathrm{Im}(s)|+4)}$
then the above error term may be reduced to 
\[
    O \left(  \tau(h) x \exp \left( -
    \mbox{$\frac{ C'_2\log T}{\log \log T}$} \right)
    \right) \ . 
\]
\end{lem}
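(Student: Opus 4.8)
## Proof proposal for Lemma \ref{shu}

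\textbf{The approach.} The plan is to recognize $(\Lambda * \log)(n)$ as the coefficient of $n^{-s}$ in the Dirichlet series $\bigl(-\zeta'/\zeta(s)\bigr) \cdot \bigl(-\zeta'(s)\bigr) = \zeta'(s)\zeta'(s)/\zeta(s)$, and to extract the asymptotics of the restricted, shifted sum $\sum_{u \le x, (u,k)=1} (\Lambda * \log)(hu)$ by Perron's formula together with the classical zero-free region for $\zeta(s)$. Two technical devices must be dealt with before the main contour argument: first, the coprimality condition $(u,k)=1$, which is handled by M\"obius inversion, writing $\mathbf{1}_{(u,k)=1} = \sum_{d \mid (u,k)} \mu(d)$; second, the shift by $h$ inside the argument, which is handled via Lemma \ref{conv} (the case $j=3$, with $f_1 = \Lambda$, $f_2 = \log$, $f_3 = \mathbf{1}$, and $D = h$, or more efficiently by directly expanding $(\Lambda*\log)(hu)$ over divisors of $h$ as in the proof of that lemma). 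After these reductions, the core of the problem is the unrestricted sum $\sum_{m \le y} (\Lambda * \log)(m)$, which I will evaluate by a standard Perron/contour argument.

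\textbf{The key steps, in order.} First I would write $\sum_{u\le x,\,(u,k)=1}(\Lambda*\log)(hu) = \sum_{d\mid k}\mu(d)\sum_{\substack{u\le x\\ d\mid u}}(\Lambda*\log)(hu)$ and then use the divisor structure of $\Lambda*\log$ to reduce each inner sum to a manageable combination of shifted unrestricted sums $\Phi(y) := \sum_{m\le y}(\Lambda*\log)(m)$, picking up finite arithmetic factors depending on the primes dividing $hk$ --- these are exactly what will assemble into $\eta_1(k)$, $\eta_2(k)$, $g(h,k)$, and the $\log h$, $(\Lambda*\log)(h)$ terms in the statement. Second, for $\Phi(y)$ itself I would apply the truncated Perron formula to $\frac{\zeta'(s)^2}{\zeta(s)}\cdot\frac{y^s}{s}$ and move the contour to the left, past the pole at $s=1$. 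The function $\zeta'(s)^2/\zeta(s)$ has a double pole at $s=1$ (since $\zeta$ has a simple pole and $\zeta'$ a double pole there), so the residue at $s=1$ contributes a polynomial of degree $2$ in $\log y$ --- this yields the $\tfrac12(\log x)^2$ and $2\log(x/e)\log h$ and $(C_0 - \eta_1(k))\log(x/e)$ type terms, with the Laurent expansion of $\zeta$ at $s=1$ supplying the explicit constants $C_0, C_1$ (Stieltjes constants). There is no pole at $s=0$ beyond what $1/s$ contributes (bounded). Third, I would bound the shifted contour using the classical zero-free region $\sigma \ge 1 - c/\log(|t|+2)$, on which $\zeta'/\zeta$ and hence $\zeta'(s)^2/\zeta(s)$ grow at most like a power of $\log(|t|+2)$; the usual optimization of the contour (taking it along $\sigma = 1 - c/\log y$ away from the real axis, with a small detour) gives the error $O(y\exp(-C_2\sqrt{\log y}))$. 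Since $\log x \asymp \log T$, this is $O(x\exp(-C_2\sqrt{\log T}))$, and the divisor sum over $d \mid k$ and the expansion over $h$ multiply this by at most $\tau(hk) \ll \tau(h) T^{\epsilon}$, which is absorbed (after adjusting $C_2$) into the stated bound. Finally, for the conditional improvement, one simply replaces the classical zero-free region by the hypothesized larger region $\sigma \ge 1 - c_0/\log\log(|t|+4)$; rerunning the contour optimization with this wider region replaces $\sqrt{\log y}$ by $\log y/\log\log y$, giving the error $O(\tau(h)x\exp(-C_2' \log T/\log\log T))$.

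\textbf{The main obstacle.} The genuinely delicate part is the bookkeeping in the first step: correctly tracking how the coprimality condition and the shift by $h$ interact to produce precisely the arithmetic functions $\eta_1, \eta_2, g$ and the constants $C_0, C_1$ as written, rather than the contour argument itself, which is entirely standard. In particular, after M\"obius inversion one must carefully expand $(\Lambda*\log)(hu)$ with $d \mid u$, collect the terms according to the prime support, and match them against the generating-function identities used in Lemma \ref{arith}; the appearance of $\eta_2(k) = -\sum_{p\mid k}\frac{p\log p}{(p-1)^2}$ and of $g(h,k)$ comes from differentiating local Euler factors of the form $(1 - p^{-s})^{-1}$ with respect to $s$ at $s=1$, which is where sign errors and off-by-one errors in the constants are most likely to creep in. Once that algebra is organized correctly, the rest follows from the prime-number-theorem-style estimate for $\Phi(y)$.
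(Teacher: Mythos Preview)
Your overall strategy --- Perron's formula applied to the generating series for $(\Lambda*\log)$, contour shift into the zero-free region, residue at $s=1$ giving the main term, error $O(x\exp(-C\sqrt{\log x}))$ from the classical region and $O(x\exp(-C\log x/\log\log x))$ from the hypothesized wider region --- matches the paper exactly. Where you diverge is in the combinatorial organization: you propose to strip off the coprimality condition by M\"obius over $d\mid k$ first, reduce everything to the unrestricted sum $\Phi(y)=\sum_{m\le y}(\Lambda*\log)(m)$, and reassemble $\eta_1,\eta_2,g$ at the end. The paper instead builds both the coprimality condition and the shift by $h$ directly into the single generating function
\[
A(z)=\sum_{(u,k)=1}\frac{(\Lambda*\log)(hu)}{u^{z}},
\]
applies Lemma~\ref{conv} (with $j=2$) to factor it as $\sum_{ab=h}A_1(z;a,b)A_2(z;a)$, and writes each factor explicitly in terms of $\zeta,\zeta',\zeta'/\zeta$ and the local products $\Phi(z;n)=\prod_{p\mid n}(1-p^{-z})$. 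The arithmetic functions $\eta_1(k),\eta_2(k),g(h,k)$ then fall out immediately as values and derivatives of $\Phi(z;n)$ and $\eta(z;n)=\Phi'/\Phi(z;n)$ at $z=1$, rather than being reconstituted from a M\"obius sum. Both routes work; the paper's buys a cleaner residue calculation (one Perron integral, one triple-pole residue) at the cost of more algebraic setup, while yours front-loads the combinatorics you rightly flag as the delicate part.

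Two small corrections. First, $\zeta'(s)^2/\zeta(s)$ has a \emph{triple} pole at $s=1$ (order $4-1=3$), not a double one; your conclusion that the residue is quadratic in $\log y$ is nonetheless correct, and the paper confirms this. Second, your claim that a stray factor $\tau(hk)\ll\tau(h)T^{\epsilon}$ ``is absorbed (after adjusting $C_2$)'' is false as stated: $T^{\epsilon}=\exp(\epsilon\log T)$ swamps $\exp(C\sqrt{\log T})$. The honest bound from your M\"obius sum is a factor of size $\sum_{d\mid k}\tau(d)/d\ll(\log\log k)^{O(1)}$, not $\tau(k)$; the paper handles the analogous issue by showing its auxiliary factor $j(k)=\prod_{p\mid k}(1+10p^{-1/2})$ satisfies $j(k)\ll\exp(o(\sqrt{\log k}))$, which genuinely is absorbed.
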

\begin{proof} 
Put $A(z) = \sum_{(u,k)=1} (\log*\Lambda)(hu)u^{-z}$.
We have by Perron's formula 
\[
      \sum_{{\begin{substack}{u \le x
         \\ (u,k)=1}\end{substack}}} (\Lambda*\log)(hu) = \frac{1}{2 \pi i}
         \int_{\kappa-iU}^{\kappa+iU}  A(z)x^z \frac{dz}{z} 
         + O \left( x^{\kappa}
         \sum_{n=1}^{\infty} \frac{ (\Lambda*\log)(hn)}{n^{\kappa}
         (1+ T |\log(x/n)|)}
          \right)
\]
for $\kappa= 1+ O((\log x)^{-1})$  (see \cite[p.\ 132]{Te}).  
First note that $(\Lambda*\log)(hn) \ll \tau(h)^2 (\log n)^2 \tau(n)^2$.
In the last sum above the contribution
from those $n$ not in  $[0.5x,2x]$ is 
\[
  \ll \frac{\tau(h)x}{U}  \sum_{n \ge 1} \frac{(\log n)^2 \tau(n)^2}{n^{\kappa}}
  \ll \frac{\tau(h) x (\log x)^{6}}{U} \ . 
\]
We now consider the contribution from those $n \in [0.5x,2x]$.  We begin with 
the interval $[0.5x,x)$.  This yields the contribution 
\begin{equation}
\begin{split}
   & \ll \tau(h) \Lo^2
   \sum_{\frac{x}{2} \le n < x} \tau(n) \min(1, U^{-1} |\log(x/n)|^{-1})  \ . 
   \nonumber
\end{split}
\end{equation}
Since $|\log(x/n)|^{-1} \ll \frac{x}{|x-n|}$ this sum is 
\begin{equation}
\begin{split}
   \ll \sum_{x-\frac{x}{U} \le n < x} \tau(n)
   + \frac{x}{U} \sum_{\frac{x}{2} < n < x - \frac{x}{U}} \frac{\tau(n)}{x-n}
   \ . 
   \nonumber
\end{split}
\end{equation}
By an application of Lemma \ref{sidiv} the first sum is $\ll \frac{x}{U} \log x$.
Now dividing the second sum into $K \ll U$ intervals of length $\frac{x}{U}$
and invoking again Lemma \ref{sidiv} we see that
\begin{equation}
\begin{split}
    &  \frac{x}{U} \sum_{\frac{x}{2} < n < x - \frac{x}{U}} \frac{\tau(n)}{x-n}
     \ll \frac{x}{U}
     \sum_{k=1}^{K} \frac{U}{kx}  \sum_{x-(k+1)\frac{x}{U} \le n <
     x-k \frac{x}{U}} \tau(n)  \\
     & \ll  \sum_{k=1}^{K} k^{-1}  \frac{x \log x}{U}
     \ll \frac{x \log x \log U}{U} \ . 
     \nonumber
\end{split}
\end{equation}
Combining estimates we deduce for $U \le x$
\[
      \sum_{{\begin{substack}{u \le x
         \\ (u,k)=1}\end{substack}}} (\Lambda*\log)(hu) = \frac{1}{2 \pi i}
         \int_{\kappa-iU}^{\kappa+iU}  A(z)x^z \frac{dz}{z} 
         +  O \left( 
         \frac{\tau(h) x \Lo^6}{U}
         \right) \ . 
\]
In a moment we shall give a decomposition of $A(z)$ 
in terms of other well known Dirichlet series and thus we shall show that it
has a triple pole at $z=1$.  Let $\sigma_0 (t) = 1-\frac{c'}{\log(|t|+2)}+it$
for $t \in \mathbb{R}$. We shall shift the contour left to 
$\mathrm{Re}(s)= \sigma_{0}(U)$.  Therefore
\begin{equation}
\begin{split}
   &   \sum_{{\begin{substack}{u \le x
         \\ (u,k)=1}\end{substack}}} (\Lambda*\log)(hu) 
       =  \mathrm{Re}_{z=1} \left( A(z)x^z z^{-1} \right) +  
       \frac{1}{2 \pi i} \int_{\sigma_{0}(U)-iU}^{\sigma_{0}(U)+iU}
       A(z) x^z \frac{dz}{z}  \\
   & + O \left(
   \left( \int_{\kappa+iU}^{\sigma_0(U)+iU} + \int_{\kappa-iU}^{\sigma_0(U)-iU}
   \right)
    A(z)x^z \frac{dz}{z}
    + \frac{\tau(h) x \Lo^6}{U} 
   \right)  \ . 
   \label{eq:cint}
\end{split}
\end{equation}
By Lemma \ref{conv}
\[
   A(z) = \sum_{ab =h} \sum_{(c,ak)=1} \frac{\log(bc)}{c^z}
   \sum_{(d,k)=1} \frac{\Lambda(ad)}{d^z} :=
   \sum_{ab=h} A_1(z;a,b) A_2(z;a) \  .
\]
A calculation shows that
\[
   A_1(z;a,b) =
   \Phi(z;ak) \left(
   \log(b) \zeta(z)-\zeta'(z) - \zeta(z) \eta(z;ak)
   \right)
\]
where for $n \in \mathbb{N}$ 
\[
     \Phi(z;n) = \prod_{p \mid n} \left(1 - p^{-z} \right)
      \ , \
      \eta(z;n) := 
      \frac{\Phi^{'}(z;n)}{\Phi(z;n)} = 
      \sum_{p \mid n} \frac{\log p}{p^z-1} \ . 
\]
Also 
\[
    A_2 (z;a) = \left\{ \begin{array}{ll}
                   - \frac{\zeta'}{\zeta}(z) -\eta(z;k)
                   & \mbox{if $a=1$} \\
                 \frac{\log p}{1-p^{-z}} & \mbox{if} \ a=p^l \ , \ (a,k)=1    \\
                 \log p & \mbox{if} \ a=p^l \ , \ p \mid k \\
                 0 & \mbox{else}              
                      \end{array} \ . 
          \right. 
\]
It is convenient to define $\tilde{\Lambda}(a;z)= (\log p)/(1-p^{-z})
= \Lambda(a)(1+ \frac{1}{p^z-1})$ for $a=p^l$.  Thus we have
$A(z)= B_1(z)+B_2(z)+B_3(z)$ where 
\begin{equation}
  B_1(z)   = \left(
   - \frac{\zeta'}{\zeta}(z) - \eta(z;k)
   \right)  \Phi(z;k)\left(
   -\zeta'(z) + \zeta(z)(\log h - \eta(z;k)
   \right)  \ , 
\end{equation}
\begin{equation}
  B_2(z)  = \sum_{a \mid (h,k)}
   \Lambda(a) \Phi(z;ak)
   \left(
   - \zeta'(z) +\zeta(z)(\log(h/a)- \eta(z;ak))
   \right)   \ , 
\end{equation}
\begin{equation}
  B_3(z) = 
   \sum_{{\begin{substack}{a \mid h
         \\ (a,k)=1}\end{substack}}}
   \tilde{\Lambda}(a;z) \Phi(z;ak)
   \left(
   - \zeta'(z) +\zeta(z)(\log(h/a)- \eta(z;ak))
   \right)   \ . 
\end{equation}
We define $R_{j}= \mathrm{Res}_{z=1} ( B_j(z)x^{z}/z)$ for $j=1,2,3$.
We remark that $\eta(1;k)=\eta_1(k)$, $\eta'(1;k)=\eta_2(k)$, $\Phi(1;k) = \frac{\phi(k)}{k}$,
\begin{equation}
\begin{split}
   \Phi'(1;k) & = \Phi(1;k) \eta_1(k) = \frac{\phi(k)}{k} \eta_1(k) 
   \ ,  \\
   \Phi^{(2)}(k) 
   & = \Phi(1;k) \left(
   \eta(1;k)^2 + \eta'(1;k)
   \right) = \frac{\phi(k)}{k} 
   \left(
   \eta_1(k)^2 + \eta_2 (k) 
   \right) \ .
   \label{eq:phiid} 
\end{split}
\end{equation}
We now list several Laurent series that we require in the residue computation
\begin{equation}
\begin{split}
     \frac{x^z}{z}
    & = x \left(
    1 + 
    \log(x/e)
    (z-1)
    + \left(
    \frac{1}{2} (\log x)^2 - \log (x/e)
    \right)(z-1)^2 + \cdots
    \right) \ ,  \\
    \Phi(z;n)  & = \Phi(1;n) + \Phi^{'}(1;n)(z-1)
   + \frac{1}{2} \Phi^{(2)}(1;n) (z-1)^2 + \cdots  \ , \\
    \frac{\zeta'(z)^2}{\zeta(z)}
    & = \frac{1}{(z-1)^3}
   \left(
   1 + a_1 (z-1) + a_2 (z-1)^2 + \cdots 
   \right)  \ ,   \\
    -\zeta'(z)
   & = \frac{1}{(z-1)^2}
   \left(
   1 + b_2 (z-1)^2 + \cdots 
   \right)   \ , 
   \nonumber
\end{split}
\end{equation}
with $a_j,b_j \in \mathbb{R}$.  
Note that 
\[
    B_1(z) = \Phi(z;k) \left(  \frac{\zeta'(z)^2}{\zeta(z)}
    - \zeta'(z) \left( \log(h) -2\eta(z;k) \right)
     + \zeta(z)(\eta(z;k)^2 -\log(h)\eta(z;k) ) \right) \ . 
\]
We begin by writing $R_1 = R_{11}+R_{12}+R_{13}$ where
\begin{equation}
\begin{split}
  R_{11} & = \mathrm{Res}_{z=1}
  \left(
  \Phi(z;k) \frac{\zeta'(z)^2}{\zeta(z)} \frac{x^z}{z}
  \right) \ , \\
   R_{12}  & = \mathrm{Res}_{z=1} \left(
     -\zeta'(z)  \Phi(z;k) \left( \log(h)-2\eta(z;k) \right)
     \frac{x^z}{z}
     \right)  \ , \\
     R_{13} & =  \mathrm{Res}_{z=1}
   \left(
   \zeta(z) \Phi(z;k) (\eta(z;k)^2 - \log(h) \eta(z;k)) \frac{x^z}{z}
   \right)    \ . 
   \nonumber  
\end{split}
\end{equation}
We deduce from the above Laurent series that
\begin{equation}
\begin{split}
 R_{11} & = x ( (1/2) \log(x)^2 \Phi(1;k)
 + \log(x/e) ( (a_1-1)\Phi(1;k) + \Phi'(1;k)) \\
 & + (a_2 \Phi(1;k)+ a_1 \Phi'(1;k) + \Phi^{(2)}(1;k) ) \ . 
 \nonumber
\end{split}
\end{equation}
By (\ref{eq:phiid})  this simplifies to
\[
R_{11} =  \frac{x \phi(k)}{k} 
  \left( \frac{1}{2}\log(x)^2 + \log(x/e)( (a_1-1)+ \eta_1(k)) 
 + (a_2 + a_1 \eta_1 (k) + \eta_1(k)^2+\eta_2 (k)) \right)  \ . 
\]
Similar calculations yield
\[
   R_{12} = \frac{x \phi(k)}{k}
   \left(
   ( \log(x/e)+\eta_1 (k))(\log h - 2 \eta_1 (k))
   -2 \eta_2 (k)
   \right)
\]
and $R_{13}  
   = \frac{x\phi(k)}{k} ( \eta_1 (k)^2 - \eta_1 (k) \log h)$.
Combining our formulae we have
\begin{equation}
\begin{split}
  R_1 & = \frac{1}{2} (\log x)^2 + \log(x/e)( \log h + a_1 -1 -\eta_1 (k))
  + a_1  \eta_1 (k) -\eta_2 (k)  \ .
  \label{eq:R1}
\end{split}
\end{equation}
We next deal with $R_2$. Note that 
\[
  \mathrm{Res}_{z=1}
  \left(
  - \zeta'(z) \Phi(z;ak) \frac{x^z}{z}
  \right)
  = x \Phi(1;ak)
  \left( \log(x/e)  + \eta_1(ak) \right) \ , 
\]
\[
   \mathrm{Res}_{z=1}
   \left(
   \zeta(z)
   \Phi(z;ak)(\log(h/a) -\eta(z;ak)) \frac{x^z}{z}
   \right)
   = x \Phi(1;ak)
   \left( \log(h/a)-\eta_1 (ak) \right) \ , 
\]
and thus 
\[
   R_2 = 
   x \sum_{a \mid (h,k)} \Lambda(a) \frac{\phi(ak)}{ak}
   \left( 
    \log(x/e) + \log(h/a)  
   \right) \ . 
\]
Now observe that 
$\frac{\phi(ak)}{ak}= \frac{\phi(k)}{k}$ if $a \mid k$ and 
$\frac{\phi(ak)}{ak}= \frac{\phi(k)}{k}(1-1/p)$
if $(a,k)=1$ and hence
\begin{equation}
\begin{split}
   R_2 & = 
   \frac{x \phi(k)}{k}  \sum_{a \mid (h,k)} \Lambda(a)
   \left( 
   \log(x/e) +\log (h/a) 
   \right)  \ . 
\end{split}
\end{equation}
We now consider $R_3$.  Since
\begin{equation}
\begin{split}
   & \mathrm{Res}_{z=1}
  \left(
  -\zeta'(z) \tilde{\Lambda}(a,z) \Phi(z;ak)
  \right)  \\
  & = x \Phi(1;ak)
  \left(
  \tilde{\Lambda}(a;1)\log (x/e)+ \tilde{\Lambda}'(a;1)
  + \tilde{\Lambda}(a;1) \eta_1 (ak)
  \right) \ ,   \\
   & \mathrm{Res}_{z=1}
   \left(
   \zeta(z) \tilde{\Lambda}(a;z)
   \Phi(z;ak)(\log(h/a) -\eta(z;ak))
   \right)  \\
   & = x \Phi(1;ak) \tilde{\Lambda}(a;1)
   \left( \log(h/a)-\eta_1 (ak) \right) \ , 
   \nonumber
\end{split}
\end{equation}
it follows that
\begin{equation}
\begin{split}
  R_3 & =  x 
    \sum_{{\begin{substack}{a \mid h
         \\ (a,k)=1}\end{substack}}}
   \Phi(1;ak)( \tilde{\Lambda}(a;1) ( \log(x/e) + \log(h/a) )
   + \tilde{\Lambda}'(a;1)) \ . 
\end{split}
\end{equation}
By the identities $\Phi(1;ak)= \frac{\phi(ak)}{ak}=
\frac{\phi(k)}{k}(1-1/p)$,
$\tilde{\Lambda}(a;1)= \Lambda(a) \frac{p}{p-1}$,
$\tilde{\Lambda}'(a;1)= -\Lambda(a)\frac{p \log p}{(p-1)^2}$
we derive
\[
  R_3 = \frac{x\phi(k)}{k} 
   \sum_{{\begin{substack}{a \mid h
         \\ (a,k)=1}\end{substack}}}
 \Lambda(a) \left(
 \log(x/e) + \log(h/a) - \frac{\log p}{p-1}
 \right) \ . 
\]
Combining $R_2$ and $R_3$ we have 
\begin{align}
  R_2 + R_3 & =  \frac{x \phi(k)}{k} \left(
  \sum_{a \mid h} \Lambda(a) \left(
  \log(x/e)+ \log(h/a) \right)  
 -   \sum_{{\begin{substack}{a \mid h
         \\ (a,k)=1}\end{substack}}} 
         \frac{\Lambda(a) \log p}{p-1}
         \right)   \nonumber \\
         & = \frac{x \phi(k)}{k} \left(
         \log(x/e) \log h + (\Lambda*\log)(h) - g(h,k)
         \right) \ . 
         \label{eq:R2pR3}
\end{align}
Combining our expressions for $R_1$~(\ref{eq:R1}) and $R_2+R_3$~(\ref{eq:R2pR3}) we see that
\begin{equation}
\begin{split}
   \mathrm{residue} & = \frac{x \phi(k)}{k}
   \left( 
   \frac{1}{2} (\log x)^2 + 2\log(x/e) \log h + (\Lambda*\log)(h) 
   \right. \\
    & 
    +(C_{0} -\eta_1(k)) \log (x/e) 
    +a_1 \eta_1(k)-\eta_2(k)-g(h,k)
   \ . 
    \nonumber
\end{split}
\end{equation}
It suffices to compute the other error terms in~(\ref{eq:cint}).  We have the standard bounds
$
   \left|  (\zeta'/\zeta)(z)  \right| \ll (\log |z|) 
$,
$
   |\zeta^{(j)}(z)| \ll (\log |z|)^{j}
$
for $j=1,2$, $\mathrm{Re}(z) \ge 1- \frac{c'}{|\mathrm{Im}(z)|}$
and $|\mathrm{Im}(z)| \ge 3$
(see \cite[p.\ 146, p.\ 158]{Te}). Note that by 
our decomposition $A(z) =B_1(z)+B_2(z)+B_3(z)$ we have 
\begin{equation}
\begin{split}
   |B_1(z)| &  \ll  (\log U + \eta_{1/2}(k))j(k)
   (\log^2 U + \log U (\log h + \eta_{1/2}(k))  
    \ , 
   \nonumber
\end{split}
\end{equation}
\begin{equation}
\begin{split}
   |B_{2}(z)+B_{3}(z)| 
   & \ll  j(k)\sum_{a \mid h} \Lambda(a) j(a)
   (\log^2 U + \log U (\log h + \eta_{1/2}(ak)))  
   \nonumber
\end{split}
\end{equation}
and thus $|A(z)| \ll j(k) \Lo^3$.   It follows that the horizontal integrals
in~(\ref{eq:cint}) are bounded by 
\[
   \int_{\sigma_{0}(U)}^{c} |A(\sigma \pm i U)| x^{\sigma} \frac{d \sigma}{
   |\sigma \pm i U|}
   \ll \frac{x^{c} j(k) \Lo^3}{U} \ll \frac{j(k)x \Lo^3}{U} 
\]
and the leftmost vertical integral in~(\ref{eq:cint}) is bounded by 
\[
    x^{\sigma_{0}(U)}\int_{-U}^{U} 
    \frac{  |A(\sigma_{0}(U)+iu)| du}{|\sigma_{0}(U)+iu|}
    \ll x j(k) \Lo^3 \log(U) \exp \left(- \frac{c \log x}{\log(|U|+2)} 
    \right) \ . 
\]
If we choose $U= \exp(\beta \sqrt{\log x})$ for an appropriate $\beta>0$
then these last two error terms are $O(j(k)x \exp(-\beta' \sqrt{\log x}))$
for some $\beta' > 0$.  We finally deduce from~(\ref{eq:cint}) that
\[
      \sum_{{\begin{substack}{u \le x
         \\ (u,k)=1}\end{substack}}} (\Lambda*\log)(hu) =
         \mathrm{residue} + O \left(
         (\tau(h)+j(k))x \exp \left(-C_2 \sqrt{\log x} \right)
         \right) \ . 
\]
However, note that one can show 
$j(k) \ll \exp (o(\sqrt{\log k}))$ and hence the error term can 
be written as $O(\tau(h) \exp(-C_2 \sqrt{\log x}))$ for a smaller
$C_2$.  \\

We give a brief sketch how to adapt this argument for the
resonator case assuming the large zero-free region conjecture
for $\zeta(s)$.   Obviously, the residue term will remain unchanged. 
Instead in this case, we will move the contour further left to 
the line $\mathrm{Re}(s)= \sigma_1(U)$ where $\sigma_1(t)
= 1 - \frac{0.25 c_0}{\log \log (|t|+4)}$.  In this region, one can
establish that 
$
     \left|  (\zeta'/\zeta)(z)  \right| \ll (\log \log  |z|)$,
$     
   |\zeta^{(j)}(z)| \ll (\log \log |z|)^{j}
$ 
for $|z| \gg 1$.  These results may be proven exactly as in 
Lemma \ref{Lbounds} that follows.  We deduce 
\[
    x^{\sigma_{1}(U)}\int_{-U}^{U} 
    \frac{  |A(\sigma_{1}(U)+iu)| du}{|\sigma_{1}(U)+iu|}
    \ll x j(k) \Lo^3 \log(U) \exp
    \mbox{$ \left(- \frac{c \log x}{\log \log(|U|+2)} 
    \right) $}
\]
on the left edge of the contour.   Choosing $U = \exp(
\frac{\beta \log x}{\log \log x})$ for some $\beta >0$
yields the smaller error term.  
\end{proof}
We shall require a bound for $\delta$~(\ref{eq:delta}) that occurs in the decomposition~(\ref{eq:E}). 
\begin{lem} \label{delta}
For $d,k,q \in \mathbb{N}$, $\psi$ a primitive character modulo $q$ and $kq \ll T$ we have  
\[
     |\delta(q,kq,d,\psi)| \ll \frac{(d,k) \log \log T}{\phi(k) \phi(q)} \ . 
\]
Moreover, if $kq$ is squarefree then this bound may be replaced by 
$(d,k)/(\phi(k) \phi(q))$. 
\end{lem}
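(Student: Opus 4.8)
The plan is to estimate $\delta(q,kq,d,\psi)$ directly from its definition \eqref{eq:delta} by the triangle inequality, discarding the characters and M\"obius factors (all of absolute value at most $1$), and then to control the resulting divisor sum with elementary bounds on Euler's function. First I would substitute $k \mapsto kq$ in \eqref{eq:delta}: the conditions $e \mid d$ and $e \mid (kq)/q = k$ combine to $e \mid (d,k)$, and the summand becomes $\mu(d/e)\,\mu(k/e)\,\overline{\psi}(-k/e)\,\psi(d/e)\big/\phi(kq/e)$. Taking absolute values therefore gives
\[
   |\delta(q,kq,d,\psi)| \;\le\; \sum_{e \mid (d,k)} \frac{1}{\phi(kq/e)} \ .
\]

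For the general bound I would write $kq/e = (k/e)\,q$ and use the standard inequalities $\phi(ab) \ge \phi(a)\phi(b)$ and $\phi(ab) \le a\,\phi(b)$, which give $\phi(kq/e) \ge \phi(k/e)\,\phi(q) \ge \phi(k)\phi(q)/e$. Hence the sum above is at most $\phi(k)^{-1}\phi(q)^{-1}\sum_{e \mid (d,k)} e = \sigma((d,k))/(\phi(k)\phi(q))$. Since $\sigma(n)/n \le n/\phi(n) \ll \log\log n$ and $(d,k) \le k \ll T$ by hypothesis, we have $\sigma((d,k)) \ll (d,k)\log\log T$, which yields
\[
   |\delta(q,kq,d,\psi)| \;\ll\; \frac{(d,k)\log\log T}{\phi(k)\,\phi(q)} \ .
\]

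For the squarefree refinement: if $kq$ is squarefree then $(k,q)=1$, and for every $e \mid k$ one has $(e,k/e)=1$ and $(k/e,q)=1$, so $\phi(kq/e) = \phi(k/e)\,\phi(q) = \phi(k)\phi(q)/\phi(e)$ \emph{exactly}. The divisor sum then collapses via $\sum_{e \mid (d,k)} \phi(e) = (d,k)$, giving $|\delta(q,kq,d,\psi)| \le (d,k)/(\phi(k)\phi(q))$ with implied constant $1$. I do not expect a genuine obstacle here: the lemma is a routine estimate, and the only point requiring care is to track the factors of $e$ lost in the totient inequalities and to verify that the divisor sum $\sigma((d,k))$ reproduces exactly the $\log\log T$ appearing in the statement — the hypothesis $kq \ll T$ being used only to replace $\log\log((d,k))$ by $\log\log T$.
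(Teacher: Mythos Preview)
Your proof is correct and follows essentially the same approach as the paper: bound $|\delta(q,kq,d,\psi)|$ by $\sum_{e\mid(d,k)} 1/\phi(kq/e)$ and then control this divisor sum with elementary totient inequalities, collapsing the squarefree case via $\sum_{e\mid n}\phi(e)=n$. The only cosmetic difference is where the $\log\log T$ enters: the paper uses the identity $\phi(ab)\,\theta((a,b))=\phi(a)\phi(b)$ with $\theta(n)=\prod_{p\mid n}(1-1/p)\gg(\log\log 3n)^{-1}$ to write $1/\phi(kq/e)\ll (\log\log T)\,\phi(e)/\phi(kq)$ and then sums $\phi(e)$, whereas you use $\phi(kq/e)\ge\phi(k)\phi(q)/e$ and sum $e$ to produce $\sigma((d,k))\ll(d,k)\log\log T$ --- but since $\sigma(n)/n\le n/\phi(n)=1/\theta(n)$ these are the same bound in disguise.
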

\begin{proof}
Now for any $a,b \in \mathbb{N}$ we have 
$
    \phi(ab)\theta((a,b)) = \phi(a)\phi(b)
$
where $\theta(n) = \prod_{p \mid n} (1-1/p)$.  However, one can show that
$\theta(n) \gg (\log \log |3n|)^{-1}$.  From these observations it 
follows that 
\[
   |\delta(q,kq,d,\psi)| \ll 
   \log \log (kq) \sum_{e \mid (d,k)}  \frac{\phi(e)}{\phi(kq)}
   \ll \frac{(d,k) \log \log T}{\phi(q) \phi(k)} \ . 
\]
The second stated bound is obtained by the same method.
\end{proof}

\begin{lem} \label{theta}
Let $h$ be a positive multiplicative function.  
Let $1 \le k,q \le M$.
We shall provide a bound for 
\[
     \theta(\sigma)= \sum_{d \mid kq} \frac{(d,k) h(d)}{d^{\sigma}}
     \ . 
\]
$(i)$ We first establish:
\begin{equation}
  \theta(\sigma) \ll \left\{ \begin{array}{ll}
                   (1*h)(k) || \mbox{$\frac{h(n)}{n}$}||_1 
                   & \mbox{if $\sigma=1$} \\
                 \sqrt{k} (1*h)(k) ||h||_{\infty}
                  T^{\epsilon} & \mbox{if $\sigma=1/2$}                  
                      \end{array}
          \right.   \ .
          \nonumber
\end{equation}
$(ii)$  We assume that $kq$ is squarefree, $h(p) \ll f(p)$ 
where $f$ is defined by~(\ref{eq:resc}), and $q \ge 
\eta :=\left( \frac{2.5 \Lo}{\log \Lo} \right)$.  Then we obtain
\begin{equation}
  \theta(\sigma) \ll \left\{ \begin{array}{ll}
                   (1*h)(k) 
                   \exp \left(
                   o(\sqrt{\Lo})
                   \right)
                   & \mbox{if $\sigma=1$} \\
                 \sqrt{k} (1*h)(k) T^{\epsilon} 
                 & \mbox{if $\sigma=1/2$}                  
                      \end{array}
          \right.   \ . 
          \nonumber
\end{equation}
\end{lem}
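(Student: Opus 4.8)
The plan rests on the observation that, for fixed $k$, the summand $(d,k)\,h(d)\,d^{-\sigma}$ is a multiplicative function of $d$, so $\theta(\sigma)$ equals a finite Euler product over the primes dividing $kq$. Throughout I would split off the primes dividing $k$ --- which produce a factor $(1*h)(k)$, possibly together with a factor $\sqrt k$ --- from the primes dividing only $q$, controlling the latter either trivially (when $\sigma=1/2$) or, in part $(ii)$, through the size of $f$ on primes.

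Part $(i)$ is a routine divisor-sum estimate. For $\sigma=1$ I would write $d=d_1d_2$, where $d_1$ is the part of $d$ supported on primes dividing $k$ (so $(d,k)=(d_1,k)\le d_1$ and $h(d)=h(d_1)h(d_2)$). The sum over the admissible $d_2$, which are coprime to $k$ and divide $q$, is at most $||h(n)/n||_{1}$, while the sum over $d_1$ is absorbed into $(1*h)(k)$ using the size hypotheses on $h$; this gives $\theta(1)\ll(1*h)(k)\,||h(n)/n||_{1}$. For $\sigma=1/2$ the elementary inequality $(d,k)^{2}\le dk$ gives $(d,k)\,d^{-1/2}\le k^{1/2}$, hence $\theta(1/2)\le k^{1/2}\sum_{d\mid kq}h(d)$; splitting $d=d_1d_2$ as before, the $d_2$-sum is now $\le\sum_{d\mid q}h(d)\le\tau(q)\,||h||_{\infty}\ll T^{\epsilon}||h||_{\infty}$, since every divisor of $q$ is $\le M$, and the $d_1$-sum is again absorbed into $(1*h)(k)$.

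The substance is part $(ii)$, where the assumption that $kq$ is squarefree forces $(k,q)=1$, so that the Euler product splits completely:
\[
   \theta(\sigma)=\prod_{p\mid k}\bigl(1+p^{1-\sigma}h(p)\bigr)\,\prod_{p\mid q}\bigl(1+h(p)\,p^{-\sigma}\bigr)\ .
\]
At $\sigma=1$ the first product is exactly $\prod_{p\mid k}(1+h(p))=(1*h)(k)$, and at $\sigma=1/2$ the bound $1+p^{1/2}h(p)\le p^{1/2}(1+h(p))$ bounds it by $k^{1/2}(1*h)(k)$. For the $q$-product I would use $1+x\le e^{x}$: since $h(p)\ll f(p)$ and, by~(\ref{eq:resc}), $f$ vanishes outside $L^{2}\le p\le\exp((\log L)^{2})$, the $q$-product is identically $1$ unless $q$ has a prime factor in that window --- which is exactly why the hypothesis $q\ge\eta$ appears, there being nothing to prove for smaller $q$. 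When $\sigma=1$ one obtains $\sum_{p\mid q}h(p)/p\ll\sum_{L^{2}\le p\le\exp((\log L)^{2})}\frac{L}{p\log p}\ll\frac{L}{\log L}\ll\sqrt{\Lom}=o(\sqrt{\Lo})$, which is the very prime sum already estimated in the derivation of~(\ref{eq:resb}). When $\sigma=1/2$, every contributing prime satisfies $p\ge L^{2}$, so $h(p)/\sqrt p\ll f(p)/\sqrt p\ll 1/\log L$, while $q\le M$ permits at most $\ll\log M/\log L$ such primes; hence the $q$-product is $\exp\bigl(O(\log M/(\log L)^{2})\bigr)=\exp(o(\log M))\ll T^{\epsilon}$.

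The step I expect to require the most care is keeping the $k$-local contribution bounded by $(1*h)(k)$ without picking up spurious powers of $\log T$: this forces one to work with the exact multiplicative factorisation of $\theta(\sigma)$ rather than cruder divisor-sum bounds, and, in part $(i)$, to be careful about primes dividing both $k$ and $q$. Once the $q$-local factor has been isolated in part $(ii)$, the saving $\exp(o(\sqrt{\Lo}))$ follows immediately from the Mertens-type prime sum already carried out in Section~\ref{coeff}.
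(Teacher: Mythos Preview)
Your approach is essentially the paper's: both isolate the $k$-local contribution (bounded by $k^{1-\sigma}(1*h)(k)$) from the $q$-local contribution, the latter being controlled either crudely (part $(i)$) or via the prime values of $f$ (part $(ii)$). In part $(i)$ the paper groups by $g=(d,k)$ rather than by the prime support of $d$, but this comes to the same thing, and your flagged caution about primes dividing both $k$ and $q$ is exactly the point at which the paper's argument is also loose.

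The one genuine difference is in the $q$-product estimates of part $(ii)$. For $\sigma=1$ the paper bounds $\sum_{p\mid q}f(p)/p$ termwise by $L\omega(q)/(L^{2}\log L)$ and then invokes $\omega(q)\ll\log q/\log\log q$; for $\sigma=1/2$ it majorises the primes of $q$ by the first $\omega(q)$ primes and applies the prime number theorem, which is where the hypothesis $q\ge\eta$ enters (to bound $\log\log q$ from below). Your estimates instead exploit directly that every prime in the support of $f$ satisfies $p\ge L^{2}$: for $\sigma=1$ you dominate by the full prime sum $\sum_{L^{2}\le p}L/(p\log p)\ll L/\log L$, and for $\sigma=1/2$ you combine $f(p)/\sqrt{p}\ll 1/\log L$ with the trivial count $\omega(q)\ll\log M/\log L$. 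This is cleaner and, as you observe, makes the lower bound on $q$ superfluous; the paper's route trades that hypothesis for a slightly sharper dependence on $q$ that is not actually needed for the stated conclusion.
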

\begin{proof}
$(i)$ We put $g=(d,k)$, $d=gd_1$, and $k=dk_1$ so that 
\begin{equation}
\begin{split}
  & \theta(\sigma) = \sum_{g \mid k}
  g \sum_{d \mid kq, g=(d,k)} \frac{h(d)}{d^{\sigma}}
  \ll \sum_{g \mid k} g^{1-\sigma}
  \sum_{d_1 \mid q} \frac{h(gd_1)}{d_{1}^{\sigma}}  \\
  & \ll  k^{1-\sigma}  \sum_{g \mid k} h(g)
  \sum_{d_1 \mid q} \frac{h(d_1)}{d_1^{\sigma}}
  \ . 
  \nonumber
\end{split}
\end{equation}
If $\sigma=1$ then we have the bound $(1*h)(k)|| \frac{h(n)}{n}||_1$ and if $\sigma=\frac{1}{2}$ then we apply
$ \sum_{d_1 \mid q} \frac{h(d_1)}{\sqrt{d_1}} \ll 
||h||_{\infty} T^{\epsilon}$.  These bounds prove part $(i)$.   For part
$(ii)$ $kq$ is squarefree and thus $(k,q)=1$.  It follows that
\[
    \theta(\sigma) = \sum_{d \mid k} h(d) d^{1-\sigma}
    \sum_{e \mid q} h(e) e^{-\sigma}
    \le k^{1-\sigma} (1*h)(k)
    \sum_{e \mid q} h(e) e^{-\sigma} \ . 
\]
Since $q$ is squarefree and $h$ is multiplicative 
\begin{equation}
\begin{split}
   &  \log 
     \sum_{e \mid q} \frac{h(e)}{ e^{\sigma}}
   = \sum_{p \mid q} \log \left(1+ \frac{h(p)}{p^{\sigma}} \right)
   \ll \sum_{p \mid q} \frac{h(p)}{p^{\sigma}} 
 \ll \sum_{p \mid q} \frac{f(p)}{p^{\sigma}}  \ . 
  \nonumber
\end{split}
\end{equation}
Noting that $f$ is supported on those $n$ such that $n \ge L^2$
we obtain
\begin{equation}
\begin{split}
  &  \sum_{p \mid q, p > L^2} \frac{f(p)}{p}
   = \sum_{p \mid q, p > L^2} \frac{L}{p (\log p)}
   \ll \frac{L \omega(q)}{L^2  \log L} 
  \ll \frac{\log q}{L \log(L) \log \log q} 
  = o (\sqrt{\log M}) 
  \nonumber
\end{split}
\end{equation}
for $L = \sqrt{\log M \log \log M}$ and $q \ge \exp
\left( \frac{2.5 \Lo}{\log \Lo} \right)$. 
Now denote the prime divisors of $q$ as $r_1, \cdots , r_k$. 
Let $p_1, \cdots , p_k$ denote the first $k$ primes. We have 
that 
\[
    \sum_{p \mid q} \frac{f(p)}{p^{1/2}}
    \le  \sum_{p_{i}  \le 2 \log q} \frac{L}{(\log p) p^{1/2}}
    \ll   \frac{L (\log q)^{1/2}}{(\log \log q)^2} 
    = o(\log T) 
\]
for $\exp \left( \frac{2.5 \Lo}{\log \Lo} \right) \le q \le M$.  
\end{proof}

The final lemma in this section provides bounds for $L^{(k)}(s,\chi)$ 
$k=1,2$ and $\frac{L^{'}}{L}(s,\chi)$ for $s$ just to the left of 
$\mathrm{Re}(s)=1$ in the critical strip.  We have
\begin{lem} \label{Lbounds}
Suppose that $\chi$ is a primitive Dirichlet character modulo $q$. 
For $s=\sigma+it$ we put $\tau=|t|+4$. \\
(i) There exists a constant $c>0$ such that if $\mathrm{Re}(s) \ge 1 - \frac{c}{\log(q\tau)}$ then 
\[
   |L^{(k)}(s,\chi)| \ll \log(q \tau)^{k} \ , \
  \left| \frac{L^{'}}{L}(s,\chi) \right| 
  \ll \log(q \tau)  \ . 
\]
$(ii)$ Assume the large zero-free region conjecture.  If 
$\mathrm{Re}(s) \ge 1-\frac{c_0/4}{\log \log(q\tau)}$
then 
\[
    |L^{(k)}(s,\chi)| \ll  \log \log (q \tau)^{k}  \ , \
  \left| \frac{L^{'}}{L}(s,\chi) \right| \ll \log \log(q \tau)
   \ . 
\]
\end{lem}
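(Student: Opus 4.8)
The plan is to derive Lemma~\ref{Lbounds} as the Dirichlet $L$-function analogue of the classical near-edge estimates for $\zeta$ (Titchmarsh, Prachar, Davenport), the only new features being that the conductor $q$ enters everywhere through the combination $q\tau$, and that in part~$(ii)$ the de~la Vall\'ee-Poussin region is replaced by the conjectured one. Parts~$(i)$ and~$(ii)$ are proved the same way; only the width of the available zero-free strip changes.

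The engine is the Hadamard-factorization / partial-fraction identity: for $\chi$ primitive mod $q$ and $-1\le\sigma\le 2$, $|t|\ge 2$,
\[
   \frac{L'}{L}(s,\chi)=\sum_{|\gamma-t|\le 1}\frac{1}{s-\rho}+O(\log(q\tau)),
\]
where $\rho=\beta+i\gamma$ runs over the non-trivial zeros and the $O$-term absorbs the $\Gamma$-factor, the $\log q$ from the conductor, and (after comparison with the same sum at $2+it$) the zeros with $|\gamma-t|>1$. I would combine this with the Riemann--von Mangoldt bound $\#\{\rho:|\gamma-t|\le 1\}\ll\log(q\tau)$, which is unconditional and insensitive to any zero-free region. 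For part~$(i)$ I take $c$ to be, say, half the de~la Vall\'ee-Poussin constant, so that (away from the possible Siegel zero, which as usual is set aside) every zero has $\beta\le 1-\tfrac{2c}{\log(q\tau)}$; then for $\mathrm{Re}(s)\ge 1-\tfrac{c}{\log(q\tau)}$ each term satisfies $|s-\rho|\ge\sigma-\beta\gg 1/\log(q\tau)$. Subtracting the identity evaluated at $s_1=1+\tfrac{1}{\log(q\tau)}+it$, where $|L'/L(s_1,\chi)|\le-\tfrac{\zeta'}{\zeta}(\mathrm{Re}\,s_1)\ll\log(q\tau)$, controls the zero-sum and yields the bound for $L'/L$. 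Integrating $L'/L$ along the horizontal segment from $s_1$ back to $s$, together with $|L(s_1,\chi)|\le\zeta(\mathrm{Re}\,s_1)\ll\log(q\tau)$, gives the upper bound for $|L(s,\chi)|$; finally Cauchy's formula on a circle of radius $\asymp 1/\log(q\tau)$ about $s$ (which stays inside the region just treated) produces the bounds for $L^{(k)}$.

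For part~$(ii)$ I repeat the argument verbatim, but with zero-free half-width $\tfrac{c_0/4}{\log\log(q\tau)}$: the conjecture supplies zero-freeness up to $1-\tfrac{c_0}{\log\log(q\tau)}$, so for $\mathrm{Re}(s)\ge 1-\tfrac{c_0/4}{\log\log(q\tau)}$ one has the margin $|s-\rho|\ge\sigma-\beta\ge\tfrac{3c_0/4}{\log\log(q\tau)}$, which is ample both for the zero-sum and for the Cauchy circle. The zero count $\ll\log(q\tau)$ is unchanged; only the distance estimate $|s-\rho|\gg 1/\log\log(q\tau)$ replaces $|s-\rho|\gg 1/\log(q\tau)$, and one compares at $s_1=1+\tfrac{1}{\log\log(q\tau)}+it$. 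Every occurrence of $\log(q\tau)$ in the conclusion of part~$(i)$ is thereby replaced by $\log\log(q\tau)$.

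The one genuine obstacle is structural rather than computational: the classical zero-free strip has width only $\asymp 1/\log(q\tau)$, so the part-$(i)$ machine cannot produce anything sharper than powers of $\log(q\tau)$; it is precisely to obtain the $\log\log(q\tau)$-shaped estimates — which are what balance the large mean square of the resonator coefficients — that a zero-free strip of width $\asymp 1/\log\log(q\tau)$, i.e.\ the large zero-free region conjecture, is assumed. The remaining bookkeeping (the exact powers of the logarithm incurred in Cauchy's formula and in passing from $L'/L$ to $L$) is harmless, since in every later application these factors are absorbed into powers of $\log T$.
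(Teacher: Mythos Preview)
Your treatment of part~$(i)$ is fine and matches the standard argument (the paper simply cites the literature for this case). The gap is in part~$(ii)$.

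Your engine is the truncated partial-fraction formula
\[
   \frac{L'}{L}(s,\chi)=\sum_{|\gamma-t|\le 1}\frac{1}{s-\rho}+O(\log(q\tau)),
\]
together with the unconditional density estimate $\#\{\rho:|\gamma-t|\le 1\}\ll\log(q\tau)$. Under the conjecture each nearby term satisfies $|s-\rho|\gg 1/\log\log(q\tau)$, but then the zero-sum is only bounded by $\log(q\tau)\cdot\log\log(q\tau)$, not by $\log\log(q\tau)$. Subtracting the identity at $s_1$ does not save you: the $O(\log(q\tau))$ remainder is genuinely of that size (it encodes the $\asymp\log(q\tau)$ zeros with $|\gamma-t|$ between $1$ and a bounded multiple of $1$, the $\Gamma$-factor, and $\log q$), and the differenced near-zero sum still has $\ll\log(q\tau)$ terms each of size $\asymp\log\log(q\tau)$. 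The zero-\emph{count} near height $t$ does not shrink under any zero-free hypothesis, so any method that bounds $\sum|1/(s-\rho)|$ term-by-term is stuck at $\log(q\tau)$.

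The paper's proof (following Tenenbaum's treatment of $\zeta$) avoids this by working with
\[
   F(w)=\frac{L'}{L}(s_0,\chi)-\frac{L'}{L}(s_0+w,\chi),
\]
using the full Hadamard sum $\sum_\rho\bigl(\tfrac{1}{s-\rho}+\tfrac{1}{\rho}\bigr)$ rather than the truncated one, and exploiting the \emph{positivity} $\mathrm{Re}\bigl(\tfrac{1}{s_0+w-\rho}+\tfrac{1}{\rho}\bigr)\ge 0$ throughout a disk $|w|\le 4\eta$ with $\eta\asymp 1/\log\log(q\tau)$. This drops the entire zero-sum at $s_0+w$ from the upper bound for $\mathrm{Re}\,F(w)$; the remaining sum $\sum_\rho|\rho|^{-1}|s_0-\rho|^{-1}$ is then shown to be $\ll\log\log(q\tau)$ by a dyadic decomposition in $|\gamma-t|$ (the $\asymp\log(q\tau)$ nearby zeros now carry a factor $1/t$). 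Finally Borel--Carath\'eodory converts the bound on $\mathrm{Re}\,F$ into $|L'/L(s,\chi)|\ll\log\log(q\tau)$. It is this positivity-plus-Borel--Carath\'eodory mechanism, not merely the larger margin $|s-\rho|$, that produces the $\log\log$ saving.
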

\begin{proof}
Part $(i)$ is classical and 
and the proofs can be found in
\cite[pp.\ 331-343]{MV}. 
For part $(ii)$ we shall follow the argument for bounding $\zeta(s)$ 
presented in \cite{Te} pages 158-160.  
We put $\mathfrak{a}= (1-\chi(-1))/2$ and we suppose without 
loss of generality that $t > 0$.  Suppose that there exists $c_0$ such 
that if $\rho=\beta+i \gamma$ is a zero of $L(s,\chi)$ then 
\[
    \beta < 1 - \frac{c_0}{\log \log(q (|\gamma|+4))} \ .       
\]
In fact we can thus deduce that 
$
    \min_{\rho} \left(
    \frac{1}{\rho}+ \frac{1}{z-\rho}
    \right) \ge 0
$
for $z= a + ib$ where $b\ge b_0$ is sufficiently large 
and $a \ge 1-\frac{c_0/2}{\log \log qt}$
(This follows from the argument in \cite[pp.\ 158-159]{Te}).
We now let $s=\sigma+it$ be a fixed complex number with $t$ sufficiently 
large and $\sigma \ge 1-\frac{c_0/4}{\log \log qt}$.
We put $s_0 = 1+\eta+ i t$ with $\eta=\frac{c_0/4}{\log \log qt}$.  Suppose that $w$ is a complex number satisfying $|w| \le 4 \eta$. 
The point $s_0+w = \sigma'+i t'$ satisfies $t' \ge b_0$ and $\sigma'
\ge 1-\frac{c_0/2}{\log \log q t}$ and thus 
\begin{equation}
     \mathrm{Re} \left( 
   \frac{1}{\rho}+ \frac{1}{s_0+w-\rho} 
   \right) \ge 0 \ . 
   \label{eq:pos}
\end{equation}
Consider the function 
\[
    F(w) = \frac{L'(s_0,\chi)}{L(s_0,\chi)}- 
    \frac{L'(s_0+w,\chi)}{L(s_0+w,\chi)}
    \ . 
\]
By the explicit formula for $\frac{L^{'}}{L}(s,\chi)$ (see chapter 14 of 
\cite{Da}) 
\begin{equation}
\begin{split}
  F(w) & = \frac{1}{s_0+w-1}-\frac{1}{s_0-1}
  +\frac{1}{2} \left(
  \frac{\Gamma'}{\Gamma} \left(\frac{s_0+w}{2}+\frac{\mathfrak{a}}{2} \right)
  -\frac{\Gamma'}{\Gamma} \left(\frac{s_0}{2}+\frac{\mathfrak{a}}{2} \right)
  \right)  \\
  & - \sum_{\rho} \left( \frac{1}{\rho}+ \frac{1}{s_0+w-\rho} \right)
  + \sum_{\rho} \left( \frac{1}{\rho}+ \frac{1}{s_0-\rho} \right) \ . 
  \nonumber
\end{split}
\end{equation}
By (\ref{eq:pos}) and Stirling's formula it follows that
\[
  \mathrm{Re} (F(w)) \le  A\log \log q \tau + 
  \left|
  \mathrm{Re}   \sum_{\rho} \left( \frac{1}{\rho}+ \frac{1}{s_0-\rho} \right)
  \right| \ . 
\]
Now the sum is 
$\ll
     \sum_{\rho} \frac{1}{|\rho||s_0-\rho|}    
$
since $\mathrm{Re}(s_0) \ll 1$. 
Writing $\rho=\beta + i \gamma$ we divide the above sum into  intervals.  
Note that $s_0 = 1+\eta +i t$ with $t>0$.
\begin{equation}
\begin{split}
  &   I_1= [2t ,\infty) \ ,  \ 
    I_2 =[t+h,2t| \ , \ 
    I_3 = [t-h,t+h] \ , \ 
    I_4 = [1, t-h]  \ , \   \\
  &   I_5= [-1,1] \ , \ 
    I_6 = [-1,-t] \ , \
    I_7 = (-\infty, -t) \ . 
    \nonumber
\end{split}
\end{equation}
Moreover, we set for $j=1, \ldots , 7$
$ \sigma_{j} = \sum_{\gamma \in I_j}  (|\rho||s_0-\rho|)^{-1} \ . 
$ 
Before proceeding we note that
$
  |\rho| = \sqrt{\beta^2+\gamma^2} \ge  \max(|\gamma|,|\beta|)
$
and
\[
  |s_0-\rho| = \sqrt{(1+\eta-\beta)^2 + (\gamma-t)^2}
  \ge \max ( (1+\eta-\beta), |\gamma-\tau|) \ . 
\]
We define $N(t,\chi)$ to be the number of zeros of $L(s,\chi)$ in 
the box $-\frac{1}{2} \le \mathrm{Re}(s) \le \frac{3}{2}$ 
and $|\mathrm{Im}(s)| \le t$.  We shall employ the well-known
bound $N(t,\chi) \ll t \log(q(|t|+2))$.   We have
\begin{equation}
\begin{split}
   \sigma_1 = \sum_{\gamma \ge 2\tau} \frac{1}{\gamma |t-\gamma|}
   \ll \sum_{\gamma \ge 2t} \gamma^{-2} 
   \ll \frac{\log q t}{t} \ ,
   \nonumber
\end{split}
\end{equation}   
\[
   \sigma_2 \ll \sum_{t+h \le \gamma \le 2t} \frac{1}{\gamma(\gamma-t)}
   \ll \int_{t+h}^{2t} \frac{dN(u,\chi)}{u(u-t)}
   \ll \log(qt) \int_{t+h}^{2t} \frac{du}{u(u-t)}
   \ll \frac{\log(qt) \log(t/h)}{t} \ , 
\]
\begin{equation}
\begin{split}
   \sigma_3 = \sum_{t-h \le \gamma \le t+h}
   \frac{1}{\gamma (1+\eta-\beta)}
    & \ll \frac{\log \log q\tau}{c_0'} \frac{N(t+h,\chi)-N(t-h,\chi)}{t}  \\
   & \ll \frac{h \log (qt)  \log \log q\tau}{t}   \ , 
   \nonumber
\end{split}
\end{equation}
\begin{equation}
\begin{split}
   \sigma_4 = \sum_{1 \le \gamma \le t-h} \frac{1}{|\rho||s_0-\rho|}
  & \ll    \sum_{1 \le \gamma \le t-h} \frac{1}{\gamma (t-\gamma)}
   \ll \int_{1^{-}}^{t-h} \frac{dN(t,\chi)}{t(t-h)} \\
   & \ll (\log qt) \int_{1}^{t-h} \frac{du}{u (u-h)} 
 \ll \frac{\log qt}{t-h} \ .
 \nonumber
\end{split}
\end{equation}
\[
    \sigma_5 = \sum_{|\gamma| \le 1} \frac{1}{|\rho||s_{0}-\rho|}
    \ll t^{-1} \sum_{|\gamma| \le 1} |\rho|^{-1} 
    \ll t^{ -1} \beta_{\min}^{-1} N(1,\chi)
   \ , \
\]
where $\beta_{\min}$ is the smallest positive real zero of $L(s,\chi)$.
By the large zero-free region conjecture $\beta_{min} > \frac{c_0}{\log \log(4q)}$
and thus 
\[
   \sigma_5 \ll t^{-1} \log \log(q) \log(q) \ . 
\]
Similarly,
\[
   \sigma_6 = \sum_{-t \le \gamma \le -1} \frac{1}{|\rho||s_0-\rho|}
   \ll \sum_{-t \le \gamma \le -1} \frac{1}{|\gamma||t-\gamma|}
   \ll t^{-1} \sum_{-t \le \gamma \le -1} \frac{1}{|\gamma|}
   \ll \frac{\log qt}{t}  \ ,
\]
\[
   \sigma_7 = \sum_{\gamma \le -t} \frac{1}{|\rho||s_0-\rho|}
   \ll \sum_{|\gamma| \ge t} \frac{1}{|\gamma|^2}
   \ll \frac{\log q t}{t} \ . 
\]
Combining bounds and choosing $h=t/\log(q \tau)$ we derive 
\[
    \mathrm{Re}(F(w)) \ll A \log \log (q\tau) + \frac{h\log (q\tau) \log \log (q\tau)}{t} 
    \ll \log \log (q\tau)  \ . 
\]
Since we have $|s-s_0| \le 2 \eta$ it follows from the Borel-Caratheodory 
theorem
  that 
\[
  \left| \frac{L'}{L}(s,\chi) \right| \ll
  \log \log (q\tau) + \left| \frac{\zeta'}{\zeta}(s_0,\chi) \right|
  \ll \log \log (q\tau) \ . 
\]
Therefore
\[
    \log\left( 
    \frac{L(s,\chi)}{L(s_0,\chi)}
    \right) = \int_{s_0}^{s} \frac{L'}{L}(w,\chi) \, dw 
    \ll |s-s_0| \log \log  (q\tau)  \ll 1  \ . 
\]
 Now note that  $
    |\log L(s_0,\chi)| \le \log \zeta(1+\eta) 
    = \log_3  (q\tau) + O(1) 
 $
 and it follows that 
 $
     \log L(s,\chi) \ll \log_3  (q\tau) 
 $
 for $\mathrm{Re}(s) \ge 1- \frac{c_0}{\log \log q\tau}$. 
 Now writing $L(s,\chi) = \exp(\log L(s,\chi))$ we have 
 we have $
    |L(s,\chi)| 
    \le \exp (|\log L(s,\chi)|)
    \le \exp(\log_3  (q\tau) + O(1)) 
    \ll \log \log (q\tau)$. 
\end{proof}

\section{Evaluation of $\mathcal{M}_0$: Proof of Proposition \ref{M0}}

With the previous lemmas in hand we are now set to evaluate $\mathcal{M}_0$.
\begin{proof}
In~(\ref{eq:M01}) we set $l=(m,k)$, $m=lm_1$, and $k=lk_1$ to obtain
\[
   \mathcal{M}_0 = \sum_{l \le M} \sum_{k_1 \le M/l}
   \frac{y_{lk_1}}{lk_1} \frac{\mu(k_1)}{\phi(k_1)}
   \sum_{{\begin{substack}{lm_1 \le \frac{lk_1 T}{2 \pi}
         \\ (m_1,k_1)=1}\end{substack}}} a(m_1 l) \ . 
\]
Rewriting $k_1$ as $k$  
\begin{equation}
  \mathcal{M}_{0} =
  \sum_{lk \le M} \frac{y_{lk}\mu(k)}{lk\phi(k)}
  S \left(\frac{kT}{2 \pi} ;l,k \right)  
\end{equation} 
where
\[
  S \left( \frac{kT}{2 \pi};l,k \right) = \sum_{{\begin{substack}{m \le \frac{kT}{2 \pi}
         \\ (m,k)=1}\end{substack}}} a(ml)
   \; \mathrm{and} \;
  a(r) = \sum_{{\begin{substack}{uv=r
         \\ v \le M}\end{substack}}} (\Lambda*\log)(u) x_{v} \ .
\]
Note that by Lemma \ref{conv} we may decompose this as 
\[
    S \left(\frac{kT}{2 \pi};l,k \right) 
    = \sum_{gh=l} \sum_{{\begin{substack}{gv \le M
         \\ (v,kh)=1}\end{substack}}} x_{gv}
         \sum_{{\begin{substack}{uv \le  \frac{kT}{2 \pi}
         \\ (u,k)=1}\end{substack}}} (\Lambda*\log)(hu) \ . 
\]
We have by Lemma \ref{shu} 
that 
\begin{equation}
\begin{split}
     \sum_{{\begin{substack}{u \le \frac{kT}{2 \pi v}
         \\ (u,k)=1}\end{substack}}} & (\Lambda*\log)(hu)
    = \frac{T \phi(k)}{2 \pi v} ( X_1(h,k,v)   +X_2(h,k,v))
    + O \left(\frac{\tau(h) kT}{v} \exp (-C_2 \sqrt{\log T})  \right)
    \label{eq:shu2}
\end{split}
\end{equation}
since $ \log(\frac{kT}{v}) \asymp \log T$ and 
\begin{equation}
\begin{split}
   X_1(h,k,v) & =  \frac{1}{2} \mbox{$ \log \left(\frac{kT}{2 \pi v } \right)^2 $} 
   + 2 \mbox{$\log \left( \frac{kT}{2 \pi v e} \right)$} \log(h)
   + (\Lambda*\log)(h) \ ,  \\
   X_2(h,k,v) & =   
    (C_{0} - \eta_1(k))  \mbox{$\log \left( \frac{kT}{2 \pi v e}  \right)$}
    +a_1\eta_{1}(k) -\eta_2 (k) -g (h,k) \ .  
    \nonumber
\end{split}
\end{equation}   
We set $\mathcal{M}_0 = \mathcal{M}_0'+ \mathcal{M}_{0}''$ where
$\mathcal{M}_0'$ is the contribution in 
$\mathcal{M}_0$ arising from $X_1$ and $X_2$ in~(\ref{eq:shu2}) and 
$\mathcal{M}_{0}''$ denote contribution arising from the the error term 
in~(\ref{eq:shu2}).  First the error term is 
\begin{equation}
\begin{split}
 &  \mathcal{M}_{0}'' \ll T \exp(-C_2 \sqrt{\log T})
 \sum_{lk \le M} \frac{|y_{lk}|}{l \phi(k)}
  \sum_{gh=l}\tau(h) \sum_{v \le M/g}  \frac{|x_{gv}|}{v}
   \\
  & \ll T \exp(-C_2 \sqrt{\log T})
  \log \log M  ||  \mbox{$\frac{x_n}{n}$}||_1 
  \sum_{lk \le M} \frac{(\tau*|x|)(l)|y_{lk}|}{lk}  \\
  & \ll T  \exp(-C_3 \sqrt{\log T}) 
  || \mbox{$\frac{x_n}{n}$}||_1  
   ||\mbox{$ \frac{(\tau_3*|x|)(n)y_n}{n}$}||_1
  \ . 
  \nonumber
\end{split}
\end{equation}
We now deal with $\mathcal{M}_{0}'$:
\begin{equation}
\begin{split}
  \mathcal{M}_0' 
  & = \frac{T}{2 \pi}
  \sum_{lk \le M} \frac{y_{lk} \mu(k)}{lk} 
  \sum_{gh=l} \sum_{{\begin{substack}{gv \le M
         \\ (v,kh)=1}\end{substack}}} \frac{x_{gv}}{v}
    ( X_1(h,k,v)+X_2(h,k,v)) \\
    & = \frac{T}{2 \pi}
  \sum_{ghk \le M} \frac{y_{ghk} \mu(k)}{ghk} 
  \sum_{{\begin{substack}{gv \le M
         \\ (v,kh)=1}\end{substack}}} \frac{x_{gv}}{v}
    ( X_1(h,k,v)+X_2(h,k,v))  \ . 
    \nonumber
\end{split}
\end{equation}
By the variable change $hk=u$ we have 
\[
  \mathcal{M}_0'
   =  \frac{T}{2 \pi}
  \sum_{gu \le M} \frac{y_{gu}}{gu}  
  \sum_{{\begin{substack}{gv \le M
         \\ (v,u)=1}\end{substack}}} \frac{x_{gv}}{v}
   \sum_{hk=u} \mu(k) ( X_1(h,k,v)+X_2(h,k,v))  \ . 
\]
Next we will check that
\[
   \sum_{hk=u} \mu(k) X_1(h,k,v) =  -
   \mbox{$
    \frac{1}{2}\Lambda_{2}(u)
   + \log \left( \frac{T}{2  \pi v e} \right) \Lambda(u)
   + \frac{1}{2} \log \left( \frac{T}{2 \pi v} \right)^2 \delta(u)
   $}   \ . 
\]   
This follows immediately from the identities:
\begin{equation}
\begin{split}
    & \sum_{d \mid u} \mu(d) = \delta(u) :=  \left\{ \begin{array}{ll}
                  1 & \mbox{if $u=1$} \\
                  0 & \mbox{if $u > 1$} \\
                  \end{array} \right. \ , \\
  & \sum_{d \mid u} \mu(d) \log d = - \Lambda(u) \ , \\
  & \sum_{d \mid  u} \mu(d) (\log d)^{2}
   =- 2(\log u) \Lambda(u) + \Lambda_{2}(u) \ ,  \\
   & \sum_{de=u} \mu(d) (\log d)(\log e) = (\log u) \Lambda(u) 
   - \Lambda_{2}(u)  \ , 
   \\
   & \Lambda_2(u) = \Lambda(u) \log(u) + (\Lambda*\Lambda)(u)
   \ . 
      \nonumber
\end{split}
\end{equation}
Our next step is to compute $\sum_{hk=u} \mu(k)X_2(h,k,v)$. 
We recall that Lemma \ref{arith} gives us
\begin{equation}
\begin{split}
    & \phi_1(u) = \sum_{hk=u} \mu(k) \eta_1(k)  \ , \
   \phi_2(u)= \sum_{hk=u} \mu(k) \eta_2(k)   \ , \\
    & \phi_3(u)=\sum_{hk=u} \mu(k) g(h,k)  \ , 
    \  \phi_4(u)=\sum_{hk=u} \mu(k) \eta_1(k) \log(k) \ . 
\end{split}
\end{equation}
 It follows 
from the definitions~(\ref{eq:phij}) and Lemma \ref{arith} that
\begin{equation}
\begin{split}
   &\sum_{hk=u} \mu(k)X_2(h,k,v)
    =  \mbox{$ \log \left( \frac{T}{2 \pi v e} \right)$}
   (C_0 \delta(u)- \phi_1(u))\\
   & \quad -C_0 \Lambda(u)- \phi_4(u)
   +a_1 \phi_1(u)-\phi_2(u)- \phi_3(u) \ . 
\end{split}
\end{equation}
Combining these identities we arrive at 
\begin{equation}
\begin{split}
     &   \sum_{hk=u} \mu(k) ( X_1(h,k,v)+X_2(h,k,v))  
     =  - \mbox{$\frac{1}{2}$} \Lambda_{2}(u)
   +R_1 \left( \log(\mbox{$\frac{T}{2 \pi v}$})   \right) \Lambda(u)  \\
  & \quad + \mbox{$\frac{1}{2}$}
   R_2 \left(  \log(\mbox{$\frac{T}{2 \pi v}$})
   \right) \delta(u)   
   +\alpha_1(u)  \tilde{R}_1 \left( 
   \log(\mbox{$\frac{T}{2 \pi v}$})
   \right)   + \alpha_2(u) 
\end{split}
\end{equation}
where $R_1, R_2,\tilde{R}_1$ are monic polynomials of degrees $1,2,1$.
Note that $\alpha_1(u)=- \phi_1(u)$ and
$\alpha_1$ is supported on prime powers.  In fact,   
$\alpha_1(p^{\alpha})=  \frac{\log p}{p-1}$.   Also 
$\alpha_2(u) = a_1 \phi_1(u)-\phi_2(u)- \phi_3(u)  - \phi_4(u)$
and it is  supported on those integers $n$
with $\omega(n) \le 2$.   Moreover, we have  
\begin{equation}
\begin{split}
    \alpha_2(p^{\alpha})  & =
    - \frac{(\alpha+1) (\log p)^2}{p-1}
    - \frac{\log p}{p-1}  \left(a_1+ \frac{p}{p-1} \right)  \\
   \alpha_2 (p^{\alpha} q^{\beta}) & =
   - (\log p)(\log q) \left(
   \frac{1}{p-1} + \frac{1}{q-1} 
   \right) \ . 
\end{split}
\end{equation}
Therefore
\begin{equation}
\begin{split}
     \mathcal{M}'_{0}  & = \frac{T}{2 \pi} \sum_{gu \le M} \frac{y_{gu}}{gu}
  \sum_{{\begin{substack}{gv \le M
         \\ (v,u)=1}\end{substack}}} \frac{x_{gv}}{v} 
  \left( 
  -  \mbox{$\frac{1}{2}$} \Lambda_{2}(u)
   + R_1 \left(
    \log(\mbox{$\frac{T}{2 \pi v}$})  \right)
   \Lambda(u)
   \right.  \\
   & \left. \quad  +  \mbox{$\frac{1}{2}$} 
   R_2 \left( \log(\mbox{$\frac{T}{2 \pi v}$}) \right) \delta(u) 
  \right. 
   \left. + \tilde{R}_1 \left( 
   \log(\mbox{$\frac{T}{2 \pi v}$})
   \right)  \alpha_1 (u) + \alpha_2(u)
  \right)  \ .  \\
  \nonumber
\end{split}
\end{equation}
We define $H(M;u,v) = \sum_{g \le \min \left(\frac{M}{u}, \frac{M}{v} \right)}
\frac{y_{ug}x_{vg}}{g}$ and thus 
\begin{equation}
\begin{split}
   \mathcal{M}_{0} & =
   \frac{T}{2 \pi}  \sum_{{\begin{substack}{u,v \le M
         \\ (u,v)=1}\end{substack}}}
   \frac{ c'(u,v)H(M;u,v)}{av}
  + \frac{T}{4 \pi} \sum_{gv \le M} \frac{y_{g}x_{gv}}{gv} 
    R_2 \left( \mbox{$\log \frac{T}{2 \pi v}$} \right)  \\
    & + O \left( T  \exp(-C_3 \sqrt{\log T}) 
  ||  \mbox{$ \frac{x_n}{n}||_1 
  || \frac{(\tau_3*|x|)(n)y_n}{n} $}||_1
  \right) 
\end{split}
\end{equation}
where
\[
    c'(u,v) =  - \mbox{$\frac{1}{2}$} \Lambda_{2}(u)
   + R_1 \left(  \log(\mbox{$\frac{T}{2 \pi v}$})  \right) \Lambda(u)
    + \tilde{R}_1 \left( 
   \log(\mbox{$\frac{T}{2 \pi v}$})
   \right)  \alpha_1(u) + \alpha_2(u)  \ . 
\]
Now in the resonator case the error term $O(\frac{\tau(h)kT}{v} \exp(-C_2 \sqrt{\log T}))$ 
in~(\ref{eq:shu2}) above is just replaced by  $O(\frac{\tau(h)kT}{v} \exp(- \frac{C_2'\log T}{\log \log T}))$.  The argument then proceeds identically and yields the same formula as above for $\mathcal{M}_0$  except with the error term 
\[
       O \left( T  \exp \left( \mbox{$-\frac{C_3' \log T}{\log \log T}$} \right)
  \mbox{$ || \frac{x_n}{n}||_1 
  || \frac{(\tau_3*|x|)(n)y_n}{n}||_1 $}
  \right)  \ . 
\]
\end{proof}

\section{Bounding $\mathcal{E}$: Proof of Theorem \ref{Ebd}} 
In~(\ref{eq:E}) we invert summation order and replace the variables 
$k$ by $kq$ and $m$ by $md$ to obtain
\begin{equation}
\begin{split}
   \mathcal{E}
   & = \sum_{1 < q \le M} \ \chiq \tau (\overline{\psi})
   \sum_{k \le M/q} \frac{y_{kq}}{kq}
   \sum_{d \mid kq} \delta(q,kq,d,\psi)
   \sum_{m \le \frac{kqT}{2 \pi d}} a(md) \psi(m)  \\
  & = \sum_{k \le M} 
  \frac{\mathcal{N} 
  \left(
  \mbox{$ \frac{M}{k}$},  \mbox{$\frac{kT}{2 \pi}$} , k 
  \right)}{k}
\end{split}  
\end{equation}
where
\begin{equation}
   \mathcal{N}(\xi,z,k)
   = \sum_{2 \le q \le \xi}
   \frac{y_{kq}}{q} \chiq
   \tau(\overline{\psi})
   \sum_{d \mid kq} \delta(q,kq,d,\psi)
   \sum_{m \le qz/d} a(md) \psi(m) \ .
   \label{eq:N}
\end{equation}
In our analysis of $\mathcal{N}(\xi,z,k)$ we have to distinguish 
between the two cases for the coefficients $x_n,y_n$.    
We define
\[
       \eta= \left\{ \begin{array}{ll}
                    \Lo^{A}
                   & \mbox{{\it in the divisor case}} \\
                   \exp \left( \frac{2.5 \log T}{\log \log T} \right)
                 & \mbox{{\it in the resonator case}}                  
                      \end{array}
          \right.   
\]
for an arbitrary positive constant $A>0$.
We now estimate the sum $  \mathcal{N}(\xi,z,k)$ by dividing up the range
of $q$ into $2 \le q \le \eta$
and $\eta < q \le \xi \le M$.   The
case $2 \le q \le \eta$ 
is analogous to the Siegel-Walfisz theorem.  That is, 
we shall estimate directly the sum $\sum_{m \le qz/d} a(md) \psi(m)$ by 
the classical contour integral method invoking the zero-free region for
Dirichlet $L$-functions and Siegel's bound for the exceptional zero. 
The case $\eta < q \le \xi$ is analogous to Gallagher \cite{G} 
and Vaughan's  \cite{V} proofs of the Bombieri-Vinogradov theorem.  Here we shall employ an analytic form of the
large sieve inequality for Dirichlet characters.  
Thus we shall divide up $\mathcal{E}$ as:
\begin{equation}
\begin{split}
   \mathcal{E} & = 
   \sum_{k < M/\eta} \frac{\mathcal{N}(\eta, \frac{kT}{2 \pi},k)}{k}
   + \sum_{M/\eta < k \le M} 
   \frac{\mathcal{N}(\frac{M}{k}, \frac{kT}{2 \pi},k)}{k}  \\
  &  + \sum_{k \le M/\eta} 
   \frac{\mathcal{N}( \mbox{$\frac{M}{k}$},
   \mbox{$\frac{kT}{2 \pi},k)$}- 
   \mathcal{N}(\eta, \mbox{$\frac{kT}{2 \pi}$},k)}{k} \ . 
\end{split}
\end{equation}
We abbreviate this to $\mathcal{E} = \mathcal{E}_1 + \mathcal{E}_2
+ \mathcal{E}_3$.  Shortly we shall establish
\begin{prop} \label{propE12}
 $(i)$ If $x_n, y_n$ satisfy~(\ref{eq:divc}),~(\ref{eq:divc2}) then 
\begin{equation}
     \mathcal{E}_1 + \mathcal{E}_2 \ll  T \exp(-C_{4} \sqrt{\log T}) 
              \mbox{$ \left| \left| \frac{j(k)\tau_{r}(k) \tau_{r+3}(k)}{k}
              \right| \right|_1 $} \eta^{\frac{3}{2}+\epsilon} 
              \label{eq:E12div}
\end{equation}
for some $C_{4}=C_{4}(A)>0$ where $\eta=\Lo^A$.   \\

\noindent $(ii)$  Assume the large zero-free region conjecture.
If $x_n=y_n=f(n)$ then 
\begin{equation}
     \mathcal{E}_1 + \mathcal{E}_2 \ll 
    \exp 
     \mbox{$\left( 
    \frac{ (-c_0/8 + o(1))  \log T}{\log \log T} 
     \right) 
     \left|\left|
     \frac{j(k) f(k) (\tau_3*f)(k)}{k} \right|\right|_{1} $} \eta^{\frac{3}{2}}
     \ . 
     \label{eq:E12res}
\end{equation}
where $\eta=\exp(\frac{2.5 \log T}{\log \log T})$.
\end{prop}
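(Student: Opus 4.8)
The plan is to treat $\mathcal{E}_1+\mathcal{E}_2$ as the ``Siegel--Walfisz range'' of $\mathcal{E}$.  In both sums the modulus $q$ occurring in the quantity $\mathcal{N}(\xi,z,k)$ defined in~\myref{eq:N} runs only over $2\le q\le\eta$: in $\mathcal{E}_1$ one has $\xi=\eta$, while in $\mathcal{E}_2$ one has $k>M/\eta$ and hence $\xi=M/k<\eta$.  Here $\eta$ is a fixed power of $\Lo$ in the divisor case and $\eta=\exp(2.5\,\Lot)$ in the resonator case, and in either case the moduli are small enough that the twisted sums $\sum_{m\le qz/d}a(md)\psi(m)$ can be estimated directly by contour integration, whereas the complementary range $\eta<q\le M/k$ — which makes up $\mathcal{E}_3$ — is left to the large sieve.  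My first step would be to reduce both $\mathcal{E}_1$ and $\mathcal{E}_2$ to a single uniform bound for $\mathcal{N}(\xi,kT/(2\pi),k)$ valid for $1\le\xi\le\eta$ and $k\le M$, after which one sums trivially over $k$ and over $q\le\eta$, the factor $\eta^{3/2}$ appearing as $\eta$ (the count of admissible moduli) times $\eta^{1/2}$ from the Gauss sum bound $|\tau(\overline\psi)|=\sqrt{q}$.

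The heart of the matter is a uniform estimate of the form
\begin{equation*}
   \sum_{m\le Y}a(md)\psi(m)\ll Y\,(\tau_3*|x|)(d)\,|| \tfrac{x_n}{n} ||_1\,\exp\bigl(-C\sqrt{\log T}\,\bigr)
\end{equation*}
for $\psi$ a primitive character modulo $q$ with $2\le q\le\eta$, $d\mid kq$ and $T\ll Y\ll T^{3/2}$, with the exponential replaced by $\exp(-C\,\Lot)$ and $C$ proportional to $c_0$ in the resonator case (conditionally on the large zero-free region conjecture).  To prove it I would apply Perron's formula to $D_{d,\psi}(s)=\sum_{m\ge1}a(md)\psi(m)m^{-s}$, decompose $D_{d,\psi}(s)$ by Lemma~\ref{conv} with $f_1=\Lambda$, $f_2=\log$, $f_3=x$ and $D=d$ into a sum of $O(\tau_3(d))$ products of a factor built from $-\tfrac{L'}{L}(s,\psi)$, a factor built from $-L'(s,\psi)$ and $L(s,\psi)$, and a Dirichlet polynomial $\sum_{m\le M/d_1}x_{md_1}\psi(m)m^{-s}$, each carrying only bounded local Euler corrections.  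Since $\psi$ is non-principal, $L(s,\psi)$ is entire with $L(1,\psi)\ne0$, so $D_{d,\psi}(s)Y^s/s$ has no pole at $s=1$; I would shift the contour to $\mathrm{Re}(s)=1-c/\log(q\tau)$ — removing a possible exceptional real zero of $L(s,\psi)$ by Siegel's theorem, which renders $C$ ineffective and restricts the uniformity to $q\le\Lo^A$ — or, under the conjecture, to $\mathrm{Re}(s)=1-\tfrac{c_0/4}{\log\log(q\tau)}$, invoking Lemma~\ref{Lbounds} to bound $L^{(j)}$ and $L'/L$ on these lines, and then choose the truncation height $U=\exp(\beta\sqrt{\log Y})$ (resp.\ $U=\exp(\beta\,\Lot)$) and optimise $\beta$.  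The delicate point, and the main obstacle, is that on the shifted line the Dirichlet-polynomial factor grows like $M^{1-\sigma}=\exp(O(\theta\sqrt{\log T}))$ (resp.\ $\exp(O(\theta\,\Lot))$), so one must check that the zero-free-region savings dominates; this is exactly where the restrictions $\theta<1/2$ (resp.\ $\theta<1/6$) and, in the resonator case, the requirement that $c_0$ be large, enter.

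Finally I would substitute this bound into~\myref{eq:N}.  Using $|\tau(\overline\psi)|=\sqrt{q}$, the bound of at most $\phi(q)$ primitive characters modulo $q$, and $|\delta(q,kq,d,\psi)|\ll(d,k)\log\log T/(\phi(k)\phi(q))$ from Lemma~\ref{delta} (with its squarefree refinement in the resonator case, where $kq$ is squarefree), the sums over $\psi$ and over $d\mid kq$ collapse, leaving a sum $\sum_{d\mid kq}\tfrac{(d,k)(\tau_3*|x|)(d)}{d}$ that is controlled by Lemma~\ref{theta}$(i)$ in the divisor case and by Lemma~\ref{theta}$(ii)$ in the resonator case (trivially for the $q$ below the threshold of that lemma, since then $f$ vanishes on every prime divisor of $q$).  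Summing $\sqrt{q}$ against the divisor-function weights on $q$ coming from~\myref{eq:divc} over $2\le q\le\eta$ gives the factor $\eta^{3/2+\epsilon}$, and gathering the resulting $k$-dependence into $j(k)$, $\tau_r(k)$, $\tau_{r+3}(k)$ (resp.\ $j(k)f(k)(\tau_3*f)(k)$) produces exactly~\myref{eq:E12div} (resp.~\myref{eq:E12res}).
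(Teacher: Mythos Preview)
Your outline matches the paper's argument almost exactly: the paper isolates the character-sum bound as Lemma~\ref{smallq}, proves it by Perron's formula, the three-fold decomposition of $A(s,\psi,d)$ via Lemma~\ref{conv}, a contour shift into the zero-free region using Lemma~\ref{Lbounds} (with Siegel's theorem handling a possible exceptional zero in the divisor case), and then feeds this into~\myref{eq:N} using $|\tau(\overline\psi)|\le\sqrt q$, Lemma~\ref{delta}, and Lemma~\ref{theta}, summing trivially over $q\le\eta$ and over $k$. The paper's arithmetic weight is $j(d)(\tau*|x|)(d)$ rather than your $(\tau_3*|x|)(d)$, but this is cosmetic.

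Two small corrections. First, the restriction $\theta<1/6$ does \emph{not} enter here: on the shifted line the loss $M^{1-\sigma}$ is beaten by the gain $w^{\sigma-1}$ as soon as $\log M<\log w$, i.e.\ $\theta<1$, and the paper only invokes $\theta<1/2$ in passing; the genuine $\theta<1/6$ constraint comes from the $\|f\|_\infty^2$ factor in Proposition~\ref{propE3}. Second, your parenthetical that ``$f$ vanishes on every prime divisor of $q$'' below the threshold of Lemma~\ref{theta}$(ii)$ is false, since $f(p)\ne0$ already for $p\ge L^2\asymp\Lo\log\Lo$, far below $\eta$. In fact Lemma~\ref{theta}$(ii)$ as stated requires $q\ge\eta$, the \emph{opposite} of the present range; the honest argument (which the paper glosses over) is direct: since $f(kq)\ne0$ forces $kq$ squarefree with $(k,q)=1$, the $d$-sum factors as $(\tau_3*f)(k)\cdot\prod_{p\mid q}\bigl(1+(\tau*f)(p)/p\bigr)$, and the second factor is $\exp(o(\sqrt{\Lo}))$ because each prime with $f(p)\ne0$ satisfies $p\ge L^2$ while $\omega(q)\ll\log\eta/\log L$.
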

We also show that 
\begin{prop}  \label{propE3}
$(i)$   If $x_n, y_n$ satisfy~(\ref{eq:divc}),~(\ref{eq:divc2}) then 
there exists a $C_5 >0$ such that 
\begin{equation}
 \mathcal{E}_3 
 \ll T\Lo^{C_5}   \mbox{$ || \frac{\tau_{r}(k)^2}{k}||_{1}^2 $}
 \eta^{-1/2}
  + T^{\frac{3}{4}+\frac{\theta}{2} + \epsilon}
  \label{eq:E3div}
\end{equation}
where $\eta = \Lo^A$. \\

\noindent $(ii)$   Assume the large zero-free region conjecture.
If $x_n=y_n=f(n)$ then 
\begin{equation}
  \mathcal{E}_3 \ll 
  T \Lo^3 
  \mbox{$
  || \frac{f(k)^2}{k}||_1 || \frac{ f(k) (\tau*f)(k)}{k}||_{1} $}
  \eta^{-\frac{1}{2}}+ 
  ||f||_{\infty}^2 \, 
  T^{\frac{3}{4}+ \frac{\theta}{2}+\epsilon}
  \ . 
  \label{eq:E3res}
\end{equation}
where $\eta =\exp(\frac{2.5 \log T}{\log \log T})$.
\end{prop}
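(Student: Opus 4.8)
The plan is to treat $\mathcal{E}_3$ by the large-sieve argument that handles the "large moduli" range in the Gallagher--Vaughan proof of the Bombieri--Vinogradov theorem: $\mathcal{E}_3$ is precisely the contribution of the moduli $q$ with $\eta<q\le M/k$, and the large sieve inequality for primitive Dirichlet characters is unconditional, so the large zero-free region conjecture plays no essential role here --- it is invoked only for the companion pieces $\mathcal{E}_1+\mathcal{E}_2$ and for $\mathcal{E}_0$. Accordingly, in part $(ii)$ one repeats the argument verbatim with $x_n=y_n=f(n)$ and $\eta=\exp(2.5\log T/\log\log T)$, substituting the estimates for $f$ from Section~\ref{coeff} for the divisor-function bounds.

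First I would insert the definition of $\mathcal{N}$ to write
\[
\mathcal{E}_3=\sum_{k\le M/\eta}\frac1k\sum_{\eta<q\le M/k}\frac{y_{kq}}{q}\sideset{}{^*}\sum_{\psi\bmod q}\tau(\overline\psi)\sum_{d\mid kq}\delta(q,kq,d,\psi)\sum_{m\le kqT/(2\pi d)}a(md)\psi(m),
\]
and expand $a(md)=\sum_{\ell w=md,\ w\le M}(\Lambda*\log)(\ell)\,x_w$. Since $M\le T^{1/2}$, this already displays the inner $m$-sum as a bilinear form with a "short" variable $w$ and a "long" variable $\ell$, so no Vaughan- or Heath--Brown-type identity is needed. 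Next I would dispose of the divisor sum over $d\mid kq$ using Lemma~\ref{delta}, which gives $|\delta(q,kq,d,\psi)|\ll(d,k)\log\log T/(\phi(k)\phi(q))$, together with Lemma~\ref{theta} applied to the resulting sums $\sum_{d\mid kq}(d,k)h(d)d^{-\sigma}$ --- with $\sigma=1$ for the "diagonal" part of the eventual large-sieve estimate and $\sigma=\tfrac12$ for the part carrying the Gauss sums $\tau(\overline\psi)$. This extracts the factor $(\phi(k)\phi(q))^{-1}$, whose $\phi(q)^{-1}$ is ultimately responsible for the saving $\eta^{-1/2}$ coming from the tail $\sum_{q>\eta}$.

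The core step is the treatment of $\sum_{m\le kqT/(2\pi d)}a(md)\psi(m)$ for primitive $\psi$. I would (a) use Perron's formula with a truncation height $U$ to replace this by a vertical integral over $\mathrm{Re}(s)=\tfrac12$ of the twisted Dirichlet polynomial $\sum_{m\le N}a(md)\psi(m)m^{-s}$ --- shifting the contour past $\mathrm{Re}(s)=1$ is legitimate because $L(s,\psi)$ is entire for $\psi\neq\psi_0$, so there is no main term --- the Perron, horizontal-segment and tail errors being controlled by the short-interval estimate for $\tau_3*|x|$, i.e. Lemma~\ref{shiu}; with $U$ in the admissible range of that lemma these errors come out as $O(T^{3/4+\theta/2+\epsilon})$ (in fact much smaller). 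Then (b) on $\mathrm{Re}(s)=\tfrac12$ I would apply Cauchy--Schwarz in the triple $(q,\psi,t)$ to separate the "$y$-side" from the "$a$-side", using $|\tau(\overline\psi)|=\sqrt q$, and invoke the hybrid large sieve $\sum_{q\le Q}\tfrac{q}{\phi(q)}\sideset{}{^*}\sum_{\psi}\int_{-U}^{U}|\sum_{m\le N}b_m\psi(m)m^{-it}|^2\,dt\ll(Q^2U+N)\sum_m|b_m|^2$ with $b_m=a(md)m^{-1/2}$, $Q=M/k$, $N\ll MT/d$. The mean square $\sum_m|b_m|^2$ is bounded via $|a|\le(\Lambda*\log)*|x|\ll\Lo^2(\tau*|x|)$ and the hypothesis $|x_{mn}|\ll|x_m||x_n|$ (resp. the multiplicativity of $f$) by $\Lo^{O(1)}\|\tau_r(n)^2/n\|_1$ (resp. $\|f(n)(\tau*f)(n)/n\|_1$), while the $y$-side Cauchy--Schwarz factor, after $|y_{kq}|\ll|y_k||y_q|$ (resp. multiplicativity of $f$) and $\sum_{q>\eta}|y_q|^2q^{-2}\le\eta^{-1}\sum_q|y_q|^2q^{-1}\ll\Lo^{O(1)}\eta^{-1}\|\tau_r(n)^2/n\|_1$, contributes $\Lo^{O(1)}\eta^{-1/2}\|\tau_r(n)^2/n\|_1^{1/2}$ (resp. $\eta^{-1/2}\|f(n)^2/n\|_1^{1/2}$). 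Collecting: the $Q^2U$ part of the large sieve together with the $q$-tail yields the first, main term of Proposition~\ref{propE3}; the $N$-part, together with the $\sigma=\tfrac12$ case of Lemma~\ref{theta} (which supplies the factors $\sqrt k$, $T^\epsilon$, and in the resonator case $\|f\|_\infty^2$) and the prefactor $(kqT/d)^{1/2}$ from the Perron kernel $Y^{1/2}$, yields the term $T^{3/4+\theta/2+\epsilon}$ (resp. $\|f\|_\infty^2\,T^{3/4+\theta/2+\epsilon}$).

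The main obstacle is the combined bookkeeping of step (b): one must choose $U$ and arrange the Cauchy--Schwarz splittings so that the Perron errors, the $Q^2U$ and $N$ terms of the large sieve, and the trivial contribution all assemble into exactly the two quantities in the statement, while simultaneously propagating the coefficients in the \emph{factored} shape $\|\cdot\|_1\|\cdot\|_1$ --- which is what forces a careful use of $|x_{mn}|\ll|x_m||x_n|$, $|y_{mn}|\ll|y_m||y_n|$ (resp. the multiplicativity of $f$) --- and handling the shift $a(md)$ so that it neither spoils the bilinear structure nor destroys the convergence of the $d$-sum (which is where Lemma~\ref{theta} does its work).
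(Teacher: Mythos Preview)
Your proposal has a genuine gap: the assertion that ``no Vaughan- or Heath--Brown-type identity is needed'' is precisely where the argument fails. The bilinear structure $a=(\Lambda*\log)*x$ is \emph{not} by itself enough to extract the saving $\eta^{-1/2}$.

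Here is the issue. If you carry the truncated polynomial $P(s,\psi)=\sum_{m\le N}a(md)\psi(m)m^{-s}$ to the line $\mathrm{Re}(s)=\tfrac12$ and apply the hybrid large sieve directly, the $N$-term dominates: one has $\sum_{m\le N}|a(md)|^2\asymp N\Lo^{O(1)}$ with $N\asymp MT$, so the large sieve gives $\gg MT\,\Lo^{O(1)}$, and after multiplying by the Perron factor $W^{1/2}\asymp (kqT)^{1/2}$ and summing over $k$ you obtain a bound of order $T^{1+\theta/2+\epsilon}$, strictly worse than the trivial bound. Moreover, your bookkeeping of the $q$-exponent is off by one: after collecting $|y_{kq}|q^{-1}$, $|\tau(\overline\psi)|=q^{1/2}$, $|\delta|\ll\phi(q)^{-1}$, and $W^{1/2}\asymp q^{1/2}$, the Cauchy--Schwarz on the $y$-side produces $\sum_{q>\eta}|y_q|^2/q$, not $\sum_{q>\eta}|y_q|^2/q^2$; for divisor-type coefficients this tail is $\asymp\Lo^{O(1)}$ with no $\eta^{-1}$ saving.

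The paper's proof proceeds differently. After Perron on the line $\mathrm{Re}(s)=\kappa=1+\Lo^{-1}$, one applies Vaughan's identity $A=H+I$ with $H=(A-F)(1-LG)$ and $I=F-FLG+ALG$, where $F$ is the partial sum of $A$ up to $u=z^2$ and $G$ the partial sum of $L(s,\psi)^{-1}$ up to $v=T^{1/2}$. The $H$-piece stays on the $\kappa$-line: there both factors are small on average (a tail sum, and $1-LG$ which is morally $1-1$), and Cauchy--Schwarz plus the large sieve give $\mathcal{A}(|H|)\ll\Lo^{O(1)}$ \emph{uniformly in $z$}. This boundedness is exactly what, after the Stieltjes integration $\int_{\eta}^{M/k}z^{-1}\,d\mathcal{H}(z)$ with $\mathcal{H}(z)\ll z^{1/2}\Lo^{O(1)}$, produces the $\eta^{-1/2}$. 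The $I$-piece may be shifted to $\mathrm{Re}(s)=\tfrac12$ because $AL$ is entire (the $L'/L$ in $A$ is cancelled by the $L$), and is then bounded using Montgomery's fourth-moment estimate $\mathcal{A}(|L^{(j)}(s,\psi)|^4)\ll z^2T^{\epsilon}$ together with the large sieve on the short polynomials $F$, $G$, and the $x$-factor; this yields the secondary term $T^{3/4+\theta/2+\epsilon}$. The large zero-free region conjecture is indeed not used here, as you correctly note; the only difference in the resonator case is that Lemma~\ref{theta}(ii) replaces Lemma~\ref{theta}(i), and $\|f\|_\infty$ appears from the $\sigma=\tfrac12$ divisor sum.
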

With the above bounds for $\mathcal{E}_1, \mathcal{E}_2$, and $\mathcal{E}_3$ we deduce Theorem $\ref{Ebd}$ which provides
a bound for $\mathcal{E}$.
 
\begin{proof}[Proof of Theorem \ref{Ebd}] We begin with part $(i)$: the divisor case.  Since $\eta= \Lo^{A}$, $|| \mbox{$ \frac{\tau_{r}(k)^2}{k}$}||_{1},
|| \mbox{$ \frac{j(k)\tau_{r}(k)\tau_{r+3}(k)}{k}$}||_{1} \ll \Lo^{C'}$ 
it follows from Propositions
\ref{propE12} and \ref{propE3} that 
\[
   \mathcal{E} \ll T \exp(-C_{6} \sqrt{\log T})
   + T L^{C_7-0.5A} +  
   T^{\frac{3}{4}+\frac{\theta}{2} + \epsilon}
\]
for some $C_{6}=C_{6}(A),C_7 >0$.  Choosing $A=
2(A'+C_7)$ yields
\[
   \mathcal{E} \ll_{A'}   T (\log T)^{-A'}
    +   T^{\frac{3}{4}+\frac{\theta}{2} + \epsilon} \ . 
\]
We next prove part $(ii)$ of the theorem: the resonator case. 
Since $\eta= \exp( \frac{2.5 \Lo}{\log \Lo})$,
\[
   || \mbox{$\frac{f(k)^2}{k}$}||_1 \ , 
   \  || \mbox{$\frac{ j(k)f(k)(\tau_j*f)(k)}{k}$}||_1 \ll 
   \mbox{$ \exp 
   \left( \frac{(0.5+o(1)) \log T}{\log \log T} \right) $} \ , \  
   \nonumber
\]
and $||f||_{\infty} \ll T^{\theta}$ it follows that
\begin{equation}
\begin{split}
  \mathcal{E} 
  \ll T \left( \mbox{$ \exp \left(
  \frac{ (-c_0/8 +4.25+o(1)) \log T}{\log \log T}
  \right)
  + \exp \left(
  \frac{ (-0.25+o(1)) \log T}{\log \log T}
  \right) 
  $}
  \right) + T^{\frac{3}{4}+ \frac{3 \theta}{2} + \epsilon}
  \nonumber
\end{split}
\end{equation}
If $c_0$ is sufficiently large we
have established there exists a $C_{8} > 0$ such
that 
$
     \mathcal{E} \ll T   \exp \left(
     - \frac{C_{8} \log T}{\log \log T} 
     \right)   + T^{\frac{3}{4}+ \frac{3 \theta}{2} + \epsilon} 
$.
This completes the proof of Theorem \ref{Ebd}.   We have now reduced
the proof to establishing the bounds of Propositions \ref{propE12}
and \ref{propE3}. 
\end{proof}

\subsection{Bounding $\mathcal{E}_1, \mathcal{E}_2$: 
Proof of Proposition \ref{propE12}}  
 In this section we will bound $\mathcal{E}_1$ and $\mathcal{E}_2$
 and thus establish Proposition \ref{propE12}.  Note that $\mathcal{E}_1$, $\mathcal{E}_2$ each take the form
\begin{equation}
    \mathcal{E}_i = \sum_{k \le M} 
    \frac{\mathcal{N}(\xi, \mbox{$\frac{kT}{2 \pi}$},k)}{k}
    \label{eq:E12} \quad (  i=1,2)
\end{equation}
with $\xi \le \eta$.  
\begin{proof}[Proof of Proposition \ref{propE12}]
We shall evaluate 
$\mathcal{N}(\xi, \frac{kT}{2 \pi},k)$ by invoking the bound from
 Lemma \ref{delta} for  $\delta(q,k,d,\psi)$ and the following:
\begin{lem} \label{smallq}
Let $\psi$ be a non-principal character modulo $q$, $T \ll w \ll T^2$,
and $d \ll T$.  \\
{\it Divisor case}.  For any $A>0$ there exists a $C_{9} > 0$ such that
\[
   \sum_{m \le w} a(md) \psi(m)
   \ll_{A} j(d) (\tau*|x|)(d) w  \exp(- C_{9} \sqrt{\log T})
\]
for all $q \le \Lo^{A}$. \\
{\it Resonator case}.  Assume the large zero-free region conjecture
for $L(s,\psi)$.  Then we have
\[
     \sum_{m \le w} a(md) \psi(m)
   \ll   j(d) (\tau*|x|)(d) w  
   \mbox{$\exp \left(- \frac{(c_0/8) \log T}{\log \log T} \right)$} \ . 
\] 
for $q \le \exp(\frac{2.5 \Lo}{\log \Lo})$. 
\end{lem}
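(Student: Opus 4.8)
The plan is to estimate $\sum_{m\le w}a(md)\psi(m)$ by the classical Perron--contour method, using a zero-free region for $L(s,\psi)$ in place of the zero-free region for $\zeta$ that drives the proof of Lemma~\ref{shu}. Since $a=\Lambda*\log*x$ as a Dirichlet convolution, I would first apply Lemma~\ref{conv} with $j=3$, $D=d$ and $(f_1,f_2,f_3)=(\Lambda,\log,x)$ to peel off the shift by $d$: because $\psi$ is completely multiplicative this rewrites the sum as a sum over the factorizations $d=d_1d_2d_3$ of
\[
 \sum_{\substack{m_1m_2m_3\le w\\ (m_i,\,dD_i)=1}}\Lambda(m_1d_3)\,\log(m_2d_2)\,x(m_3d_1)\,\psi(m_1)\psi(m_2)\psi(m_3).
\]
As $\Lambda(m_1d_3)$ vanishes unless $d_3$ is $1$ or a prime power, and $\log(m_2d_2)=\log m_2+\log d_2$, the principal piece is the one with $d_2=d_3=1$; the remaining pieces carry one fewer $L$-function in the numerator and are handled in the same, easier, way.

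For a fixed factorization I would sum over $m_3$ on the outside (a finite range, $m_3\le M/d_1$) and, for each $m_3$, evaluate the inner sum over $m_1m_2\le w/m_3$ by Perron's formula applied to the twisted Dirichlet series
\[
 \Big(-\tfrac{L'}{L}(s,\psi)+E_1\Big)\Big(-L'(s,\psi)+E_2\Big)\sum_{(m_3,\,dD_3)=1}\frac{x(m_3d_1)\psi(m_3)}{m_3^{\,s}},
\]
where $E_1,E_2$ collect the finite Euler corrections produced by the coprimality conditions (products $\prod_{p\mid d}(1-\psi(p)p^{-s})^{\pm1}\ll j(d)$ and sums $\ll\log j(d)$ on the contours below). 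Since $\psi$ is \emph{non-principal}, $L(1,\psi)\ne0$, so this series is holomorphic at $s=1$: there is \emph{no main term}. I would truncate at height $U$, bounding the Perron truncation error by the short-interval divisor estimates of Lemmas~\ref{sidiv} and~\ref{shiu} exactly as in the proof of Lemma~\ref{shu} (it is $\ll w\,\Lo^{O(1)}/U$, up to the $j(d)(\tau*|x|)(d)$ factors), and then shift the contour to $\mathrm{Re}(s)=1-c/\log(q(|t|+2))$ in the divisor case, and to $\mathrm{Re}(s)=1-\tfrac{c_0/4}{\log\log(q(|t|+4))}$ in the resonator case. On the shifted contour Lemma~\ref{Lbounds}(i), resp.\ (ii), bounds $\tfrac{L'}{L}(s,\psi),L'(s,\psi),L(s,\psi)$ by $\Lo^{O(1)}$, resp.\ by $(\log\log T)^{O(1)}$, while the $x$-polynomial is $\ll M^{1-\sigma}\|x_n/n\|_1$; since $w\gg T\gg M^{2}$ the factor $M^{1-\sigma}$ is more than compensated by $w^{\sigma-1}$, so the shifted integral is $\ll wU\,w^{-(1-\sigma)}$, again up to $j(d)(\tau*|x|)(d)\Lo^{O(1)}$. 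In the divisor case, with $q\le\eta=\Lo^{A}$, taking $U=\exp(\beta\sqrt{\log T})$ with $\beta$ small balances these two errors and gives $\ll w\exp(-C_9\sqrt{\log T})$, the worst point being $|t|\asymp U$, where $\log(qU)\asymp\sqrt{\log T}$. In the resonator case, with $q\le\eta=\exp(\tfrac{2.5\Lo}{\log\Lo})$, one has $\log\log(qU)\le\log\log T+O(1)$ throughout, the Perron error is $\ll w\exp(-\beta\tfrac{\log T}{\log\log T})$, the shifted integral is $\ll w\exp((\beta-c_0/4)\tfrac{\log T}{\log\log T})$ up to subexponential factors, and the optimal choice $U=\exp(\tfrac{(c_0/8)\log T}{\log\log T})$ produces the saving $\exp(-\tfrac{(c_0/8)\log T}{\log\log T})$ of the statement.

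Two points remain. First, in the divisor case $-\tfrac{L'}{L}(s,\psi)$ may have a pole at a Siegel zero $\beta_1$ inside the classical zero-free region; as in the Siegel--Walfisz theorem, Siegel's ineffective bound $\beta_1<1-c(\epsilon)q^{-\epsilon}$ makes the residue there $\ll w\exp(-c(\epsilon)q^{-\epsilon}\log w)$, which for $q\le\Lo^{A}$ and $\epsilon<1/(2A)$ is $\ll w\exp(-C_9\sqrt{\log T})$ (and this is why $C_9$ is ineffective); in the resonator case the large zero-free region conjecture excludes such zeros. Second, I would reassemble by summing over $m_3$ using $\sum_{m_3}|x(m_3d_1)|/m_3\ll|x_{d_1}|\,\|x_n/n\|_1$ --- via $(\ref{eq:divc2})$ in the divisor case, and via $f(mn)\ll f(m)f(n)$ together with $(\ref{eq:f2n})$ in the resonator case --- and then over the factorizations via the identity $\sum_{d_1d_2d_3=d}|x_{d_1}|=(\tau*|x|)(d)$, with the Euler corrections bounded by $j(d)$; this produces exactly the factor $j(d)(\tau*|x|)(d)$, the leftover $\Lo^{O(1)}$ (and, in the resonator case, the factor $\exp(O(\sqrt{\log M}))$ from $\sum_{n\le M}f(n)/n$) being negligible against the exponential saving.

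I expect the main obstacle to be organizational rather than analytic, since the bounds for $\tfrac{L'}{L},L',L$ in the two zero-free regions are already supplied by Lemma~\ref{Lbounds}. The delicate points are: controlling the $x$-polynomial during the contour shift, which genuinely relies on $w\gg M^{2}$; systematically discarding the lower-complexity $d_2,d_3\ne1$ pieces and the Euler corrections from Lemma~\ref{conv} so that together they cost only the factor $j(d)$; and, above all, securing the stated uniformity in $q$ --- up to $\Lo^{A}$ in the divisor case (which forces the ineffective Siegel bound) and up to $\exp(\tfrac{2.5\Lo}{\log\Lo})$ in the resonator case, where one must check that $\log\log(q\tau)\ll\log\log T$ over the whole range of $w$, $t$ and $q$ so that Lemma~\ref{Lbounds}(ii) still yields a saving of the shape $\exp(-c\log T/\log\log T)$.
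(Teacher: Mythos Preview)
Your proposal is correct and follows essentially the same route as the paper: Perron's formula, the decomposition of the generating function $A(s,\psi,d)$ via Lemma~\ref{conv} into a product of an $x$-polynomial, an $L'$-factor with Euler corrections, and an $L'/L$-factor with Euler corrections, then a contour shift into the zero-free region with Lemma~\ref{Lbounds} supplying the bounds, the Siegel-zero residue handled by Siegel's ineffective estimate in the divisor case, and the final reassembly producing $j(d)(\tau*|x|)(d)$. The only cosmetic difference is that the paper applies Perron once to the full series $A(s,\psi,d)=\sum_m a(md)\psi(m)m^{-s}$ and decomposes afterwards, whereas you describe pulling $m_3$ outside first; since your displayed Dirichlet series still contains the $m_3$-sum, you are in effect doing the paper's version, and either ordering works. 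Your remark that $M^{1-\sigma}$ is dominated by $w^{\sigma-1}$ because $w\gg M^2$ is exactly the point (the paper is slightly casual here, absorbing $M^{1-\sigma}$ into $\Lo^{C''}$ at \eqref{eq:Abd}), and your crude factor $U$ in place of $\log U$ on the shifted integral is harmless for the stated savings.
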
  
The evaluation of  $\mathcal{N}(\xi,
\frac{kT}{2 \pi}, k)$  is split
in the two cases. 

We first consider part $(i)$.  That is, $x_n, y_n$ satisfy
(\ref{eq:divc}) and (\ref{eq:divc2}). 
By Lemma \ref{smallq} we obtain for $\xi \ll  \eta =\Lo^{A}$ 
\begin{equation}
\begin{split}
 &   \mathcal{N} \left(\xi, \mbox{$\frac{kT}{2 \pi}$},k \right)  \\
  & \ll kT \exp(-C_{9} \Lo^{1/2})
   \sum_{q \le \xi_k} |y_{kq}|
   \chiq |\tau(\overline{\psi})|
   \sum_{d \mid kq} |\delta(q,kq,d,\psi)| \frac{j(d)(\tau*x)(d)}{ d}
   \nonumber
\end{split}
\end{equation}
By Lemma \ref{delta}, $|\tau(\overline{\psi})| \le \sqrt{q}$,
$|x_n|,|y_n| \ll \Lo^{C} 
\tau_{r}(n)$ we see that $\mathcal{N}(\xi,\frac{kT}{2 \pi}, k)$ 
is bounded by 
\[
   \ll \frac{j(k)\tau_{r}(k) k T }{\phi(k)} 
  \exp(-C_{10} \sqrt{\Lo})    \sum_{q \le \xi} j(q)\tau_{r}(q) \sqrt{q}
    \sum_{d \mid kq} \frac{(d,k) (\tau*\tau_{r})(d)}{d}  \ . 
\]    
By Lemma \ref{theta}  and $\frac{k}{\phi(k)} \ll \log \Lo$
this is further bounded by 
\[
   \ll 
    j(k)\tau_{r+3}(k) \tau_{r}(k) T \exp(-C_{11} \sqrt{\Lo})    \sum_{q \le \eta} j(q)\tau_{r}(q) \sqrt{q} \ . 
\]
The last sum is $\ll \eta^{\frac{3}{2}+\epsilon}$ and thus
\[
   \mathcal{N} \left(\xi, \mbox{$\frac{kT}{2\pi}$},k \right)  \ll
    j(k)\tau_{r+3}(k) \tau_{r}(k)  T  \exp(-C_{11} \sqrt{\Lo})  
    \eta^{\frac{3}{2}+\epsilon}
\]
for some $C_{11} >0$. 
Therefore by~(\ref{eq:E12}) we have 
\[
   \mathcal{E}_1 + \mathcal{E}_2
   \ll  
   T \exp(-C_{11} \sqrt{\log T})
  \mbox{$ \left| \left| \frac{ j(k)\tau_{r+3}(k) \tau_{r}(k)}{k} \right|
   \right|_{1} $} \eta^{\frac{3}{2}+\epsilon} 
   \ . 
\]

We now establish part $(ii)$. 
Here we assume $x_n=y_n=f(n)$ and $\eta
= \exp(\frac{2.5 \Lo}{\log \Lo})$.  As before $\mathcal{N}(\xi, \frac{kT}{2 \pi},k)$ is bounded by 
\[
    kT \exp  \mbox{$ \left(-\frac{(c_0/8) \Lo}{\log \Lo} \right) $}
   \sum_{q \le \xi_k} f(kq)
   \chiq |\tau(\overline{\psi})|
   \sum_{d \mid kq} |\delta(q,kq,d,\psi)| \frac{j(d)(\tau*f)(d)}{ d} \ . 
\]
By Lemma \ref{delta} and $|\tau(\overline{\psi})| \le \sqrt{q}$,
we further bound this by 
\[
 \ll \frac{j(k)f(k) k T}{\phi(k)} 
  \exp  \mbox{$ \left(-\frac{ (c_0/8) \Lo}{\log \Lo} \right)   $} 
  \sum_{q \le \xi} j(q)f(q) \sqrt{q}
    \sum_{d \mid kq} \frac{(d,k) (\tau*f)(d)}{d}  \ . 
\]    
By Lemma \ref{theta} we obtain 
\[
     \ll 
    \frac{j(k)(\tau_{3}*f)(k) f(k) k T}{\phi(k)} 
    \exp \left(- \mbox{$\frac{(c_0/8)\Lo}{\log \Lo} + o(\sqrt{\Lo}) $} \right)   
    \sum_{q \le \eta} j(q)f(q) \sqrt{q} \ . 
\]
Obviously $\sum_{q \le \eta} j(q) f(q) \sqrt{q}
\ll \eta^{\frac{3}{2}} || \mbox{$ \frac{f(k)}{k}$}||_1$.  Thus 
\[
   \mathcal{N}(\xi, \mbox{$\frac{kT}{2 \pi}$},k)  \ll 
    j(k)(\tau_{3}*f)(k) f(k) T  
    \exp \mbox{$ \left(\frac{(-(c_0/8)+o(1))\Lo}{\log \Lo} \right) $}
    ||\mbox{$ \frac{ f(k)}{k}$}||_1 \eta^{\frac{3}{2}}    \ .
\]
Therefore by~(\ref{eq:E12})  
\begin{equation}
\begin{split}
  &  \mathcal{E}_1 + \mathcal{E}_2
  \ll  
   T \exp  \mbox{$ \left( \frac{(-c_0/8+o(1))\Lo}{\log \Lo}  \right)
   \left| \left| \frac{j(k)(\tau_{3}*f)(k) f(k)}{k}
   \right| \right|_{1} $}
   ||f(k)/k||_1 \eta^{\frac{3}{2}} \ . 
       \nonumber
\end{split}
\end{equation}
\end{proof}

\subsection{Proof of Lemma \ref{smallq}} 

We now establish Lemma \ref{smallq} which was central to establishing our 
bounds for $\mathcal{E}_1,\mathcal{E}_2$.  
\begin{proof}
Recall that $T \ll w \ll T^2$ and $d \ll T$. 
By Perron's formula, we have 
\[
    \sum_{m \le w} a(md) \psi(m)
    = \frac{1}{2 \pi i} \int_{\kappa -iU}^{\kappa+iU}
    A(s,\psi,d) w^s \frac{ds}{s} + 
    O( \epsilon )  
\]
where $ \kappa = 1+ O((\log w)^{-1})$
and 
\[
   \epsilon\ll \sum_{n \ne w} \left(\frac{w}{n} \right)^{\kappa}
   |a(dn)|
   \min \left(1 , U^{-1}|\log(w/n)|^{-1} \right)  + |a(w)| \ . 
\]
We will first show that $\epsilon$ is small.  Let $\epsilon= \epsilon_1+\epsilon_2$
where $\epsilon_1$ is the contribution from those terms with 
$n > 1.5w$ and $n< 0.5w$ and $\epsilon_2$ consists of the other terms. 
We observe that $(\Lambda*\log)(n) \le \log^2 n$ and hence
$
    |a(n)| \le (\log^2 n) (1*|x|)(n)$.
It follows that
$\epsilon_1$ is bounded by 
\[
   \frac{w}{U} 
   \sum_{n=1}^{\infty} 
   \frac{|a(dn)|}{n^{\kappa}}
   \ll \frac{w}{U}  (1*|x|)(d)
   \left(
   \Lo^2
   \sum_{n \ge 1} \frac{(1*|x|)(n)}{n^{\kappa}}
   +  \sum_{n \ge 1} \frac{\log^2 n (1*|x|)(n)}{n^{\kappa}}
   \right) \ . 
\]
Observe that 
\[
   \sum_{n =1}^{\infty} \frac{(1*|x|)(n)}{n^{\kappa}}
   = \sum_{a=1}^{\infty} \frac{1}{a^{\kappa}}
   \sum_{b \le M} \frac{|x_b|}{b^{\kappa}}
   \ll  \Lo  ||x_n/n||_{1}  \ . 
\]
A similar calculation gives
\[
    \sum_{n =1}^{\infty} \frac{(\log n)^2 (1*|x|)(n)}{n^{\kappa}}
    \ll  \Lo^{3} || \mbox{$ \frac{x_n}{n}$}||_{1} 
\]
and thus 
$
   \epsilon_{1}
   \ll  \frac{w}{U}  \Lo^{3} (1*|x|)(d)  ||\mbox{$\frac{x_n}{n}$}||_{1}
      \ . 
$
We now deal with $\epsilon_2$.  Since
$nd \ll T^{3}$ the contribution from those terms in 
$0.5w \le n < w$ is 
\begin{equation}
\begin{split}
  &  \sum_{0.5w < n < w} |a(nd)| \min(1, U^{-1}|\log(w/n)|^{-1})  \\
  &  \ll  (1*|x|)(d) \Lo^2
   \sum_{0.5w < n <  w}  (1*|x|)(n)
   \min(1,U^{-1}|\log(w/n)|^{-1}) \ . 
   \nonumber
\end{split}
\end{equation}
Since 
$
   |\log(w/n)|^{-1} \ll \frac{n}{|w-n|}
$ 
for $w/2 \le n < w$  it follows that the last sum is 
\[
  \sum_{w -\frac{w}{U} \le n < w} (1*|x|)(n)  +
  \frac{w}{U}\sum_{\frac{w}{2} < n < w-\frac{w}{U}} 
  \frac{(1*|x|)(n)}{w-n} \ . 
\] 
However by Lemma \ref{shiu} the first sum is  
$
  \ll U^{-1}  || \mbox{$ \frac{x_n}{n}$}||_{1} \ . 
$
In the second sum we divide it up in to intervals of the form 
$[w - (k+1)\frac{w}{U},w-k\frac{w}{U}]$ for $1 \le k \le K$
with $K \ll U$.  By another application of Lemma \ref{shiu} 
the second term is 
\begin{equation}
\begin{split}
  &  U^{-1} \sum_{k = 1}^{K} 
   \sum_{w - (k+1)\frac{w}{U} < n < w-k\frac{w}{U}}
   \frac{(1*|x|)(n)}{w-n}
   \ll w^{-1} \sum_{k \le K} \frac{1}{k}
   \sum_{w - (k+1)\frac{w}{U} < n < w-k\frac{w}{U}}
   (1*|x|)(n) \\
   & \ll w^{-1} \sum_{k \le K} \frac{1}{k} \frac{w}{U}
   || \mbox{$ \frac{x_n}{n}$}||_1  \ll 
   U^{-1}  \Lo || \mbox{$ \frac{x_n}{n}$}||_1 \ . 
   \nonumber
\end{split}
\end{equation}
Note that an identical argument applies to the range $w < n <1.5w$
and thus 
$
    \epsilon_2 \ll (1*|x|)(d) U^{-1} \Lo || \mbox{$ \frac{x_n}{n}$}||_{1}
$.  
In summary,   
\[
     \sum_{m \le w} a(md) \psi(m)
    = \frac{1}{2 \pi i} \int_{\kappa -iU}^{\kappa+iU}
    A(s,\psi,d) w^s \frac{ds}{s} + 
    O \left( \frac{(1*|x|)(d) || \mbox{$\frac{x_n}{n}$}||_{1} 
    w \Lo^{3}}{U} \right)
   \ . 
\]
To complete the proof we require a bound for
\begin{equation}
     I :=\frac{1}{2 \pi i} \int_{\kappa -iU}^{\kappa+iU}
    A(s,\psi,d) w^s \frac{ds}{s}  \ . 
    \label{eq:zfr}
\end{equation}
In order to achieve this we need some understanding of the generating
function $A(s,\psi,d)$.  We will
show that the generating function $A(s,\psi,d)$ can be computed 
explicitly in terms of $L(s,\psi)$.   With our knowledge of $A(s,\psi,d)$
in hand we shall deform our contour left into the zero-free region of $L(s,\psi)$
and then bound $A(s,\psi,d)$ on this contour.
Since $a(n)=(\Lambda*\log*x)(n)$,  Lemma \ref{conv}
yields
\begin{equation}
   A(s,\psi,d) =
   \sum_{d_1 d_2 d_3 =d} 
   A_1 (s,1,d_1)A_2(s,d_1,d_2)A_3(s,d_1d_2,d_3)
   \label{eq:product}
\end{equation}
where 
\begin{equation}
\begin{split}
  A_1(s,u,v) & =
   \sum_{{\begin{substack}{mv < M
         \\ (m,u)=1}\end{substack}}}
  \frac{\psi(m) x_{mv}}{m^s}  \ ,  \\
  A_2(s,u,v) & = \sum_{(m,u)=1} 
  \frac{\psi(m) \log(mv)}{m^s} \ , \\
  A_3(s,u,v) & =
  \sum_{(m,u)=1}
  \frac{\psi(m) \Lambda(mv)}{m^s} \ .  
  \nonumber
\end{split}
\end{equation}
A calculation demonstrates that
\begin{equation}
  A_2(s,u,v)
  = L^{'}(s,\psi) \Phi(s,\psi,u)-L(s,\psi) \Phi^{'}(s, \psi,u)
  + (\log v) L(s,\psi) \Phi(s,\psi,u)
  \nonumber
\end{equation}
where  $
    \Phi(s,\psi,u) = \prod_{p \mid u} (1-\psi(p)p^{-s})
    = \sum_{n  \mid u} \frac{\mu(n) \psi(n)}{n^s}$
and 
\begin{equation}
  A_3(s,u,v) = \left\{ \begin{array}{ll}
                  - \frac{L^{'}}{L}(s,\chi) -
                  \sum_{p \mid u} \frac{\chi(p) \log p}{p^s -\chi(p)}
                   & \mbox{if $v=1$} \\
                  \frac{\log p}{1-\chi(p)p^{-s}} & \mbox{if $v = p^{l}, (u,p)=1$} \\
                  \log p & \mbox{if $v = p^{l}, p \mid u$}  \\
                  0 & \mbox{else}
                  \end{array}
          \right.   \ .
          \nonumber
\end{equation}
With these expressions in hand we now analyze the behaviour of $A(s,\psi,d)$ to the right of the line $\mathrm{Re}(s)=1$.  For $\mathrm{Re}(s) \ge 1/2$, 
 $|\Phi(s,\psi,u)| \le j(d)$, $|\Phi^{'}(s,\psi,u)| \ll j(d) (\log d)$ and thus
 \[
    |A_{2}(s,u,v)| \ll j(d) 
    (|L^{'}(s,\psi)| + |L(s,\psi)| \Lo)   \ , 
\]
 \[
    |A_{3}(s,u,v)| \ll
    \mbox{$ | \frac{L'}{L}(s,\psi) |$} + \Lo  \ . 
\]
It follows from~(\ref{eq:product}) and these two last bounds that 
\begin{equation}
   |A(s,\psi,d)| \le j(d)
    \left(
   |L^{'}(s,\psi)| + |L(s,\psi)| \Lo
   \right)
   \left(
   \mbox{$| \frac{L^{'}}{L}(s,\psi) |$}
    +  \Lo
   \right) 
   \sum_{d_1 d_2 d_3=d} |A(s,1,d_1)|  
   \label{eq:Asd}
\end{equation}
Now since $|A(s,1,d_1)| \ll |x_{d_1}|
 ||  \frac{x_n}{n}||_{1} M^{1-\sigma}$ 
and by the bounds for $L^{(j)}(s,\psi)$, $\frac{L'}{L}(s,\psi)$ from 
Lemma \ref{Lbounds} we obtain
\begin{equation}
    |A(s,\psi,d)| \le j(d) (\tau*|x|)(d)  \Lo^3
    || \mbox{$ \frac{x_n}{n}$}||_{1} M^{1-\sigma} 
    \label{eq:Asd2}
\end{equation}
unconditionally for $\mathrm{Re}(s) \ge 1- \frac{c}{\log(q(|t|+4))}$
and assuming the large zero-free region conjecture it is true
for $\mathrm{Re}(s) \ge 1 - \frac{c_0/4}{\log \log(q(|t|+4))}$.
We are now prepared to bound $I$.   The argument is again 
split in two cases.  \\

\noindent {\bf Case 1.} {\it Divisor case}. It follows that $A(s,\psi,d)$ has a meromorphic continuation to all 
of $\mathbb{C}$.  For all non-principal characters, $A(s,\psi,d)$ 
has at most one simple pole in the region
$
    \{ s = \sigma +it   : \sigma \ge \sigma_{1}(t) = 1- \frac{c}{\log q(|t|+2)}
    \}
$
where $c>0$ is an absolute effective constant.  By Siegel's theorem, this pole,
if it exists is a real number $\beta$ that satisfies 
\begin{equation}
    1-\beta \gg_{\epsilon} q^{-\epsilon}
    \label{eq:siegbd}
\end{equation}
where the constant is ineffective.  We shall let $\Gamma_1$ denote
the contour $\sigma= \sigma_1 (U)$ and $|t| \le U$. 
By~(\ref{eq:Asd2})  and $||\frac{x_n}{n}||_1 \ll \Lo^{C'}$ we have
\begin{equation}
   |A(s,\psi,d)| \ll j(d) (\tau*|x|)(d) \Lo^{C''}  
   \label{eq:Abd}
\end{equation}
where $s = \sigma_{0}(U)+it$, $|t| \le U$, $|s-1| \gg \Lo^{-1},$ and
$|s-\beta| \gg \Lo^{-1}$.  It follows from Cauchy's theorem
that 
\begin{equation}
\begin{split}
  \sum_{m \le w} a(md) \psi(m) &  \ll 
   \int_{\Gamma_1}
   \mbox{$ \left| A(s,\psi,d) \frac{w^s}{s}\right| ds $} \\
  & + \mbox{$\left| \mathrm{res}_{s=\beta} A(s,\psi,d) \frac{w^s}{s}  \right|$}
  + \frac{w }{U} (\tau*|x|)(d) \Lo^{4} 
  || \mbox{$ \frac{x_n}{n}$}||_1 \ . 
\end{split}
\end{equation}
By~(\ref{eq:Abd}) 
\begin{equation}
\begin{split}
   \int_{\Gamma_1}
   A(s,\psi,d) \frac{w^s}{s} \, ds 
   & \ll j(d)(\tau*|x|)(d) \Lo^{C''}  
   w \exp  \mbox{$ \left( 
   \frac{-c\log w}{\log(q(U+2))}
   \right) $} \\
   & \ll j(d)(\tau*|x|)(d) \Lo^{C''}  
   w \exp(-C_{12} \sqrt{\log w})
   \nonumber
\end{split}
\end{equation} 
since $q \le \Lo^{A}$ and $U= \exp(C_{13} \sqrt{\log w})$.
To bound the residue at $s=\beta$ (the possible Siegel zero)
we invoke Siegel's ineffective bound~(\ref{eq:siegbd}) 
to obtain
\[
    w^{\beta} \le w \exp \mbox{$ \left( -
    \frac{C_{13} \log w}{q^{\epsilon}}
    \right)$}
    \le w \exp  \mbox{$ \left( -
    \frac{C_{13} \log w}{ \Lo^{\epsilon A}}
    \right)$} \ll w \exp \left( 
    - C_{14} \sqrt{\Lo}
    \right)
\]
if $\epsilon \le (2A)^{-1}$.   Thus 
\[
   \mathrm{res}_{s=\beta} A(\beta,\psi,d)
   \frac{w^s}{s} \ll j(d) (\tau*x)(d) \Lo^{C''} 
   w \exp(-C_{14} \sqrt{\Lo}) \ . 
\]
Collecting all estimates yields
\begin{equation}
   \sum_{m \le w} a(md) \psi(m) 
   \ll_{A}  j(d)(\tau*|x|)(d)  
  w \exp(-C_{15} \sqrt{\Lo}) \ . 
   \label{eq:siegel}
\end{equation}
where $C_{15} = C_{15}(A)>0$ and $q \le \Lo^{A}$.   \\

\noindent {\bf Case 2.} {\it Resonator case}.  In this case we assume that for each $q$ that every primitive Dirichlet $L$-function $L(s,\psi)$ is non-vanishing
in the region 
$
     \{ s = \sigma+it : \sigma \ge \sigma_{2}(t) = 1- 
    \frac{c_0/4}{\log \log (q(|t|+4))} 
    \}  
$.  We shall let $\Gamma_2$ denote the contour with $\sigma = \sigma_{2}(U)$
and $|t| \le U$.   
By~(\ref{eq:Asd2}) and  Lemma \ref{Lbounds}
\[
   |A(s,\psi,d)| \ll j(d) (\tau*|x|)(d) \Lo^{4}   ||\mbox{$ \frac{x_n}{n}$} ||_1
   \exp 
   \mbox{$ \left( \frac{c_0/4\log M}{\log \log M}  \right) $}
\]
where $s = \sigma_{2}(U)+it$, $|t| \le U$, 
$|s-1| \gg \Lo^{-1}$.  By Cauchy's theorem 
\[   \sum_{m \le w} a(md) \psi(m)  \ll 
    \int_{\Gamma_2}\left| A(s,\psi,d) \frac{w^s}{s}\right| ds  
  + \frac{w }{U} (\tau*|x|)(d) \Lo^{4} ||\mbox{$ \frac{x_n}{n}$} ||_1
   \ . 
\]
Hence
\begin{equation}
\begin{split}
   \int_{\Gamma_2}
   A(s,\psi,d) \frac{w^s}{s} \, ds 
   & \ll j(d)(\tau*|x|)(d) \Lo^5  ||\mbox{$ \frac{x_n}{n}$} ||_1
   w \exp  \mbox{$ \left( \frac{c_0}{4} \left( \frac{ \log M}{\log_2 M} -
   \frac{\log w}{\log_2(q(U+2))} \right)
   \right)  $} 
   \nonumber
\end{split}
\end{equation} 
We choose $U = \exp(\frac{C_{17} \log w}{\log \log w})$.  Since
$q \le \exp(\frac{2.5 \Lo}{\log \Lo})$, $T \ll w \ll T^2$ we have
\[
    \int_{\Gamma_2}
    A(s,\psi,d) \frac{w^s}{s} \, ds 
   \ll j(d)(\tau*|x|)(d)   ||\mbox{$ \frac{x_n}{n}$} ||_1
   w \exp  \mbox{$ \left(  \frac{c_0}{4}(\theta-1) (1+o(1))\frac{\Lo}{\log \Lo}
   \right)   $}
\] 
As $ ||\mbox{$ \frac{x_n}{n}$} ||_1
\ll \exp ( 
o(\sqrt{\Lo}))$ and $0 < \theta < \frac{1}{2}$ we deduce 
\[
   \sum_{m \le w} a(md) \psi(m) 
   \ll  j(d)(\tau*|x|)(d)  w 
   \mbox{$ \exp \left( -\frac{(c_0/8)\log T}{\log \log T} \right) $}
\]
for $q \le \exp \left( \frac{2.5 \Lo}{\log \Lo} \right)$. 
\end{proof}

\subsection{Bounding $\mathcal{E}_3$:  Proof of Proposition \ref{propE3}} 
We now prove the bound for
\begin{equation}
    \mathcal{E}_3 = \sum_{k \le M/\eta} 
   \frac{\mathcal{N}(\frac{M}{k},\frac{kT}{2 \pi},k)- \mathcal{N}(\eta, \frac{kT}{2 \pi},k)}{k} \ . 
    \label{eq:E3def}
\end{equation}
\begin{proof}[Proof of Proposition \ref{propE3}]
 By Perron's formula applied with $U=T^{20}$, $T \ll w \ll T^2$,
and $\kappa =1 + (\log w)^{-1}$ we have 
\[
   \sum_{m \le w} a(md) \psi(m)
   = \frac{1}{2 \pi i} \int_{\kappa-iU}^{\kappa+iU}
   A(s,\psi,d)w^{s} \frac{ds}{s} + O(w^{\epsilon})  \ . 
\]
Combining this expression with the definition~(\ref{eq:N}) of $\mathcal{N}$
we obtain
\begin{equation}
\begin{split}
 & | \mathcal{N}( \mbox{$\frac{M}{k}$}, \mbox{$\frac{kT}{2\pi}$},k)
 - \mathcal{N}(\eta,\mbox{$\frac{kT}{2 \pi}$},k)|  \ll 
  \sum_{\eta \le q \le \frac{M}{k}}
   \frac{|y_{kq}|}{q} \\
 & 
   \cdot  \chiq |\tau(\overline{\psi})|
   \sum_{d \mid kq} |\delta(q,kq,d,\psi)|
    \left( \left|
   \int_{\kappa-iU}^{\kappa+iU} A(s,\psi,d) \left( \frac{qkT}{2 \pi d} \right)^{s} 
   \frac{ds}{s}
   \right| + O(T^{\epsilon}) \right)  
   \label{eq:N1bounds}
\end{split}
\end{equation}
By Lemma \ref{delta} the term containing $O(T^{\epsilon})$ contributes
\begin{equation}
\begin{split}
   & \ll T^{\epsilon}    \sum_{\eta \le q \le \frac{M}{k}} \frac{|y_k||y_q|}{\sqrt{q}}
   \chiq  \sum_{d \mid kq}
    \frac{(d,k)}{\phi(k) \phi(q)} 
    \ll T^{\epsilon}   \sum_{\eta \le q \le \frac{M}{k}} \frac{|y_k||y_q| \sigma_1(kq)}{ \phi(k) \sqrt{q}}  \ll  |y_k| T^{\epsilon} M^{\frac{3}{2}} \ . 
   \nonumber
\end{split}
\end{equation}
The first sum in~(\ref{eq:N1bounds}) is bounded by 
\begin{equation}
\begin{split}
   & \ll  \frac{|y_k| (\log \Lo) }{\phi(k)}
   \sum_{\eta \le q \le \frac{M}{k}} \frac{|y_q| \sqrt{q}}{q \phi(q)}
    \ \chiq
   \sum_{d \mid kq} \mu^2(kq)(d,k)
   \left|
   \int_{\kappa-iU}^{\kappa+iU} A(s,\psi,d) \left( \frac{qkT}{2 \pi d} \right)^{s} 
   \frac{ds}{s}
   \right|  \\
   & \ll  \frac{|y_k| (\log \Lo) }{\phi(k)}
 \max_{Q \le \frac{M}{k}}  \left( \mu^{2}(kQ)  \sum_{d \mid kQ} (d,k)
 \int_{\eta}^{\frac{M}{k}} z^{-1} dS(z) \right)
 \nonumber
\end{split}
\end{equation}
where  
\[
   S(z) = \sum_{q \le z} \frac{q^{1/2} |y_q|}{\phi(q)}
   \chiq
   \left|
   \int_{\kappa-iU}^{\kappa+iU} A(s,\psi,d) \left( \frac{qkT}{2 \pi d} \right)^{s} 
   \frac{ds}{s}
   \right|   \ . 
\]  
The next step is to dissect $A(s,\psi,d) = \sum_{m=1}^{\infty} a(md)\psi(m) m^{-s}$ via Vaughan's identity. 
We define the partial sum of $A(s,\psi,d)$
\[
    F = F(s,\psi,d) = \sum_{m \le  u} \frac{a(md)\psi(m)}{m^s} \ , 
\]
and the partial sum $G(s,\psi)$ of $L(s,\psi)^{-1}$ by 
\[
    G = G(s,\psi) = \sum_{m \le v} \frac{\mu(m) \psi(m)}{m^s} \ . 
\]
We choose the parameters
\[
    u=z^2 \ \mathrm{and} \ v =T^{1/2}
\]
where $z$ is a real variable satisfying
$\eta \le z \le \frac{M}{k}$.  
Vaughan's identity is 
\[
    A= (A-F)(1-LG)+ (F-FLG+ALG)  \ . 
\]
We write this as $A=H+I$ where
\[
   H=(A-F)(1-LG) \ \mathrm{and} \
   I = F-FLG+ALG \ . 
\]
It follows that 
\[
   \frac{1}{2 \pi i} 
   \int_{\kappa-iU}^{\kappa+iU}
   A(s,\psi,d)  \left(\frac{qkT}{2 \pi d} \right)^s \frac{ds}{s}
   = 
    \frac{1}{2 \pi i} 
   \int_{\kappa-iU}^{\kappa+iU}
   (H+I)(s,\psi,d)  \left(\frac{qk T}{2 \pi d} \right)^s \frac{ds}{s}  \ . 
\]
By the argument of \cite[p.\ 514]{CGG3} we have
\[
   \int_{\kappa-iU}^{\kappa+iU}
   I(s,\psi,d) \left(
   \frac{qkT}{2 \pi d}
   \right)^s \frac{ds}{s}
   = 
   \int_{\frac{1}{2}-iU}^{\frac{1}{2}+iU}
   I(s,\psi,d) \left(
   \frac{qkT}{2 \pi d}
   \right)^s \frac{ds}{s} + O(T^{-1}) \ . 
\]
Next we define
\[
   \mathcal{H}(z) = \sum_{q \le z} 
   \frac{|y_q| q^{3/2}}{\phi(q)} 
   \ \chiq \int_{-U}^{U} 
   |H(\kappa+it)| \frac{dt}{\kappa+|t|}
\]
where $\kappa =1 + O(\Lo^{-1})$ and 
\[
    \mathcal{I}(z) = \sum_{q \le z} 
   \frac{|y_q| q}{\phi(q)} 
   \ \chiq \int_{-U}^{U} 
   |I(1/2+it)| \frac{dt}{1/2+|t|} \ . 
\]
With these definitions in hand we obtain 
\begin{equation}
    |\mathcal{N} (\mbox{$\frac{M}{k}$},
    \mbox{$\frac{kT}{2 \pi}$} ,k )
    -\mathcal{N} (\eta,\mbox{$\frac{kT}{2 \pi}$} ,k )|
    \ll \sigma_1 + \sigma_2 + |y_k| M^{\frac{3}{2}} T^{\epsilon}
    \label{eq:Nbd}
\end{equation}
where
\begin{equation}
\begin{split}
    \sigma_1 & =
   \frac{(Tk) |y_k| \log \Lo}{\phi(k)} \max_{Q \le \frac{M}{k}} \mu^{2}(kQ)
   \sum_{d \mid kQ} \frac{(d,k)}{d} \int_{\eta}^{\frac{M}{k}}
    z^{-1} d \mathcal{H}(z) \ ,  \\
    \sigma_2 & =    
    \frac{(Tk)^{1/2} |y_k| \log \Lo}{\phi(k)} \max_{Q \le \frac{M}{k}} \mu^{2}(kQ)
   \sum_{d \mid kQ} \frac{(d,k)}{d^{1/2}} \int_{\eta}^{\frac{M}{k}}
   z^{-1} d \mathcal{I}(z)  \ . 
   \nonumber
\end{split}
\end{equation}
Next we will show the bounds 
\begin{equation}
     \int_{\eta}^{\frac{M}{k}}
    z^{-1} d \mathcal{H}(z)   \ll (1*|x|)(d) \Lo^{5} 
    ||  \mbox{$ \frac{x_k^2}{ k}$}||_{1}^{1/2}
     ||  \mbox{$ \frac{x_k}{ k}$}||_{1}
   \left( 
   \eta^{-1/2} + T^{-\delta}k^{-1/2}
   \right)  \ , 
    \label{eq:intH}
\end{equation}
\begin{equation}
   \int_{\eta}^{\frac{M}{k}}
    z^{-1} d \mathcal{I}(z)  \ll
    j(d) \tau_3(d)|x(d)|^2
   \left(
   T^{\theta+\epsilon} k^{-\frac{1}{2}}
   + T^{\frac{\theta}{2}+\frac{1}{4}+\epsilon}
   \right)
   \label{eq:intI}
\end{equation}
where $\delta =\frac{1}{4}-\frac{\theta}{2}$. We
deduce 
\begin{equation}
\begin{split}
  \sigma_1 & \ll T \Lo^{7} 
    || \mbox{$ \frac{x_k^2}{ k}$}||_{1}^{1/2}
     ||  \mbox{$ \frac{x_k}{ k}$}||_{1}
   \left( 
   \eta^{-1/2} + T^{-\delta} k^{-1/2}
   \right) 
    |y_k| \max_{Q \le \frac{M}{k}} \mu^{2}(kQ)
   \sum_{d \mid kQ} \frac{(d,k)(1*|x|)(d)}{d} \ , 
\end{split}
\end{equation}
\begin{equation}
  \sigma_2 \ll  \frac{|y_k|}{k^{\frac{1}{2}}}
  \max_{Q \le \frac{M}{k}} \mu^{2}(kQ)
  \sum_{d \mid kQ} \frac{(d,k)j(d)\tau_3(d) |x(d)|^2}{d^{1/2}}
  \left(
  T^{\frac{1}{2}+\theta+\epsilon} k^{-\frac{1}{2}}
  + T^{\frac{3}{4}+\frac{\theta}{2}+\epsilon}
  \right) \ . 
  \label{eq:sig2}
\end{equation}
We now bound  $ |\mathcal{N}(\frac{M}{k},\frac{kT}{2 \pi},k)-\mathcal{N}(\eta,\frac{kT}{2 \pi},k)|$ 
in the two cases: \\

\noindent {\bf Case 1}. {\it Divisor case}.  We have by Lemma \ref{theta} $(i)$
\begin{equation}
\begin{split}
   \sigma_1 & \ll T \Lo^{C_{18}} 
   \left( 
   \eta^{-1/2} + T^{-\delta}k^{-1/2}
   \right) 
    \tau_{r}(k)\max_{Q \le \frac{M}{k}} \mu^{2}(kQ)
   \sum_{d \mid kQ} \frac{(d,k)\tau_{r+1}(d)}{d}  \\
   & \ll T \Lo^{C_{19}} 
   \left( 
   \eta^{-1/2} + T^{-\delta}k^{-1/2}
   \right) 
    \tau_{r}(k) \tau_{r+2}(k) \ . 
    \nonumber
\end{split}
\end{equation}
By  Lemma  \ref{theta} $(i)$ and $|x_n|, |y_n| \ll T^{\epsilon}$
we obtain 
\[  \sigma_{2}   
 \ll   
  T^{\frac{1}{2}+\theta+\epsilon} k^{-1/2} + T^{\frac{3}{4}+\frac{\theta}{2}+\epsilon} \ . 
\]
From~(\ref{eq:E3def}),~(\ref{eq:Nbd}) and our bounds for $\sigma_i$
we have 
\begin{equation}
\begin{split}
  \mathcal{E}_3 & \ll  T \Lo^{C_{19}} \sum_{k \le M/\eta} 
   \frac{\tau_{r}(k) \tau_{r+2}(k)}{k} 
     \left( 
   \eta^{-1/2} + T^{-\delta}k^{-1/2}
   \right) 
   + 
  T^{\frac{1}{2}+\theta+\epsilon} +      
  T^{\frac{3}{4}+\frac{\theta}{2}+\epsilon}
   \\
  & \ll T \Lo^{C_{20}} \eta^{-\frac{1}{2}}
  + T^{\frac{3}{4}+ \frac{\theta}{2}+ \epsilon}
  \nonumber
\end{split}
\end{equation}
for $0 < \theta < 1/2$ as claimed. \\

\noindent {\bf Case 2}. {\it Resonator case}.  
By~(\ref{eq:intH}) and~(\ref{eq:f2n}) we have
\[
    \sigma_1 \ll T\exp  \mbox{$ \left(
  \frac{(0.25+o(1))\Lo}{\log \Lo}
  \right) $}
   \left( 
   \eta^{-1/2} + T^{-\delta}k^{-1/2}
   \right) 
   f(k) \max_{Q \le \frac{M}{k}} \mu^{2}(kQ)
   \sum_{d \mid kQ} \frac{(d,k)(1*f)(d)}{d} 
\]
By an application of Lemma \ref{theta} $(ii)$ this is
\[
  \sigma_1  \ll  T\exp 
  \mbox{$ \left(
  \frac{(0.25+o(1))\Lo}{\log \Lo} 
  \right) $}
   \left( 
   \eta^{-1/2} + T^{-\delta}k^{-1/2}
   \right) 
    f(k) (\tau*f)(k)  \ . 
\]
By ~(\ref{eq:sig2}), Lemma \ref{theta} $(ii)$ and $f(d) ,  (\tau*f)(d) \ll ||f||_{\infty}
 T^{\epsilon}$ (see~(\ref{eq:finf})) we find
\[
     \sigma_2 \ll  f(k) 
     ||f||_{\infty}^{2}
  \left(
  T^{\frac{1}{2}+\theta+\epsilon} k^{-1/2} + T^{\frac{3}{4}+\frac{\theta}{2}+\epsilon}
  \right)
\]
By~(\ref{eq:E3def}),~(\ref{eq:Nbd}) and our expressions for the $\sigma_i$
\begin{equation}
\begin{split}
 \mathcal{E}_3 & \ll
 T \exp \mbox{$ \left( \frac{(0.25+o(1)) \Lo}{\log \Lo} 
 \right) $}
  || \mbox{$ \frac{f(k) (\tau*f)(k)}{k}$}||_1 
   \eta^{-1/2} +  ||f||_{\infty}^2 (T^{\frac{1}{2}+\theta+\epsilon} +  T^{\frac{3}{4}+\frac{\theta}{2}+\epsilon})
   \nonumber
\end{split}
\end{equation}
since   $|| \mbox{$ \frac{f(k)}{ k^{3/2}}$}||_1 \ll 1$
and $|| \frac{f(k)}{k}||_1 \ll M^{\epsilon}$.
\end{proof}
\section{Establishing (\ref{eq:intH})}  
The argument of Proposition \ref{propE3} has been reduced to 
establishing (\ref{eq:intH}) and (\ref{eq:intI}).  In this
section we establish (\ref{eq:intH}).
We require the large sieve inequality:
\begin{equation}
  \sum_{q \le z} \frac{q}{\phi(q)}
  \chiq \int_{-U}^{U}
  \left| 
  \sum_{n} a_n \psi(n) n^{-it}
  \right|^2  \frac{dt}{\kappa+|t|}
  \ll \sum_{n} (n +z^2 (\log U)) |a_n|^2 \ . 
  \label{eq:lsi}
\end{equation}
In addition, we define for an arbitrary function
$\phi(s,\psi)$ the operator
\[
   \mathcal{A}(\phi)=
   \sum_{q \le z} \frac{q}{\phi(q)}
   \sideset{}{^*}\sum_{\psi \, \mathrm{mod} \, q}
   \int_{-U}^{U} |\phi(1/2+it,\psi)| \frac{dt}{\kappa+|t|} \ . 
\]
Notice that if $c$ is a constant then $\mathcal{A}(c \phi)=|c| \mathcal{A}(\phi)$
and also for two functions $\phi_{i}=\phi_{i}(s,\psi)$ for $i=1,2$ we have
$
     \mathcal{A}(\phi_1 \phi_2) \le \mathcal{A}(\phi_{1}^2)^{1/2}
     \mathcal{A}(\phi_{2}^2)^{1/2} 
$.
Recall that $H=(A-F)(1-LG)$.  By~(\ref{eq:lsi})
\begin{equation}
\begin{split}
   \mathcal{A}( (A-F)^2)  & = 
   \sum_{q \le z} \frac{q}{\phi(q)}  \sideset{}{^*}\sum_{\psi \, \mathrm{mod} \, q}
   |A(\kappa+it)-F(\kappa+it)|^2
   \frac{dt}{\kappa+|t|}  \\
  &  \ll \sum_{m \ge u} (m+z^2 \Lo) |a(md)|^2 m^{-2 \kappa}
  \ . 
\end{split}
\end{equation}
Since $|a(md)| \ll (\log^2 m) \Lo^2 (1*|x|)(d) (1*|x|)(m)$ 
for $d \le T$ 
\begin{equation}
\begin{split}
   &  \mathcal{A}( (A-F)^2)\ll (1*|x|)^2 (d) \Lo^4 \cdot 
  \sum_{m \ge u} \frac{(\log^4m )(1*|x|)^2(m)}{m^{2\kappa}}
  (m+z^2 \Lo) 
  \ . 
  \nonumber
\end{split}
\end{equation}
Since $2\kappa-1 = 1+ O(\Lo^{-1})$ we have
\begin{equation}
\begin{split}
   & \sum_{m \ge u} \frac{(\log^4m )(1*|x|)^2(m)}{m^{2\kappa-1}}
   = \sum_{v_1,v_2 \le M} |x_{v_1}||x_{v_2}|
    \sum_{{\begin{substack}{m \ge u
         \\ [v_1,v_2] \mid m }\end{substack}}}
  \frac{ \log^4 m}{m^{2 \kappa-1}}  \\
 & =  \sum_{v_1,v_2 \le M} 
  \frac{|x_{v_1}||x_{v_2}|}{[v_1,v_2]^{2 \kappa-1}}
  \sum_{m' \ge \frac{u}{[v_1,v_2]}} 
  \frac{\log^4 (m' (v_1,v_2))}{(m')^{2\kappa-1}}  
  \ll \Lo^5 \sum_{v_1,v_2 \le M} 
  \frac{|x_{v_1}||x_{v_2}|}{[v_1,v_2]}  
  \nonumber
  \end{split}
\end{equation}
Since $[v_1,v_2]^{-1}= (v_1 v_2)^{-1} \sum_{g \mid (v_1,v_2)}
\phi(g)$ the last expression is
\[
   \ll \Lo^5 \sum_{g \le M} \frac{\phi(g) |x_g|^2}{g^2}
   (
   \sum_{{\begin{substack}{v \le \frac{M}{g}
         \\ (v,g)=1}\end{substack}}}
   \frac{|x_v|}{v}
   )^2 
   \ll \Lo^5 
   \mbox{$  ||\frac{x_n^2}{n} ||_1 || \frac{x_n}{n} ||_{1}^{2} $}\ . 
\]
An analogous calculation establishes
\[
   \sum_{m > u} \frac{(\log m)^4 (1*|x|)(m)^2}{m^{2 \kappa}}
   \ll \Lo^{4} || \mbox{$ \frac{(1*|x|)^2(k)}{k}$}||_1 u^{-1}
   \ll \Lo^5 \mbox{$  ||\frac{x_n^2}{n} ||_1 || \frac{x_n}{n} ||_{1}^{2} $}
   u^{-1} \ . 
\]
Therefore 
$
    \mathcal{A}( (A-F)^2) \ll (1*|x|)(d)^2 \Lo^6
     || \mbox{$\frac{x_{n}^2}{ n}$}||_{1} 
      || \mbox{$\frac{x_{n}}{ n}$}||_{1}^{2} 
$. 
Moreover,  in \cite{CGG3} it is established that
$
  \mathcal{A}((1-LG)^2)
     \ll (1+z^2 v^{-1}) \Lo^{4}$.
Thus we obtain
\begin{equation}
\begin{split}
  \mathcal{H}(z) 
   & \le \sqrt{z} \mathcal{A}(|(A-F)(1-LG)|)  \\
   & \ll (1*|x|)(d) \Lo^{5}  
   || \mbox{$ \frac{x_k^2}{ k}$}||_{1}^{1/2}
    || \mbox{$ \frac{x_k}{ k} $}||
   \sqrt{z} (1+ zT^{-1/4})  
   \nonumber  
\end{split}
\end{equation}
where in the last line we applied Cauchy-Schwarz. 
Since $M=T^{\theta}$
 we deduce  
\[
   \int_{\eta}^{\frac{M}{k}} z^{-1} d \mathcal{H}(z)
   \ll (1*|x|)(d) \Lo^{5} || \mbox{$ \frac{x_k^2}{ k}$}||_{1}^{1/2}
    || \mbox{$ \frac{x_k}{ k}$}||_{1}
   \left( 
   \eta^{-1/2} +  T^{\frac{\theta}{2}-\frac{1}{4}} k^{-1/2}
   \right)  \ . 
\]

\section{Establishing ~(\ref{eq:intI}) }
In this section, we are not so precise about bounds. 
This is since we will have a small power savings 
from the main term. 
We set $s=1/2+it$ and we now provide a bound for $\mathcal{I}(z)$. 
Since
$
   \mathcal{I}(z) 
   = \mathcal{A}(F-FLG+ALG)  
$
we see that
\begin{equation}
  \mathcal{I}(z)
  \ll  \mathcal{A}(F^2)^{1/2} 
    \mathcal{A}(1)^{1/2} +
    \mathcal{A}(F^2)^{1/2}
     \mathcal{A}(L^4)^{1/4} 
     \mathcal{A}(G^4)^{1/4} +
     \mathcal{A}( |LA| |G|) \ . 
     \label{eq:Ix}
\end{equation}
It follows from~(\ref{eq:Asd}) that
\[
 |A(s,\psi,d)L(s,\psi)| \le  j(d) \tau_3(d)T^{\epsilon}
  (|L(s,\psi)|^2 + |L^{'}(s,\psi)|^2)
 | \mathcal{B}(s)|
\]
for some Dirichlet polynomial $\mathcal{B}(s) = \sum_{m \le y}
\frac{b_m}{m^s}$ where $|b_m| \ll |x_d||x_m|$.
 Thus
 \begin{equation}
 \begin{split}
   &  \mathcal{A}(|LA| |G|)
     \le j(d) \tau_3(d) T^{\epsilon}
    \mathcal{A} (|L(s,\psi)|^2 + |L^{'}(s,\psi)|^2)
    | \mathcal{B}(s)||G(s,\psi)|)  \\
  & \ll j(d) \tau_3(d) T^{\epsilon} \mathcal{A}( |L(s,\psi)|^{4} + |L^{'}(s,\psi)|^4)^{1/2}
     \mathcal{A}( |\mathcal{B}(s,\psi)|^4 ) ^{1/4}
     \mathcal{A}( |G(s,\psi)|^4)^{1/4}
     \label{eq:ALAG}
\end{split}
\end{equation}
It suffices to bound $\mathcal{A}(\phi)$ for a variety of $\phi=\phi(s,\psi)$.
We have  the following bounds:
\begin{equation}
  \mathcal{A}(1) \ ,   \
   \mathcal{A}(|L(s,\psi)|^4)  \ , \
  \mathcal{A}( |L'(s,\psi)|^4)
  \ll z^2 T^{\epsilon} \ .
  \label{eq:Abds}
\end{equation}
The first bound is trivial and the last two are due to an argument
of Montgomery \cite{M}. 
Next we analyze $\mathcal{A}(F^2)$, $\mathcal{A}(G^4)$, and 
$\mathcal{A}(\mathcal{B}^4)$. 
Note that 
\[
   F(s,\psi) = \sum_{k \le u} \frac{a(kd) \psi(k)}{k^{s}} \ , \
   G(s,\psi)^2 = \sum_{k \le v^2} \frac{\beta_k}{k^s} \ , \
   \mathcal{B}(s)^2= \sum_{k \le M^2}
   \gamma_k k^{-s}
\]
where the coefficients satisfy
\[
   a(kd) \ll T^{\epsilon} (\tau*x)(d) (\tau*x)(k)
 \ , \
 |\beta_k| \le \tau(k) \ , \
  |\gamma_{k}| \ll x(d)^2 (|x|*|x|)(k)  \ . 
\]  
The large sieve~(\ref{eq:lsi}) inequality yields
\begin{equation}
\begin{split}
   \mathcal{A}(F(s,\psi)^2)
   & \ll  T^{\epsilon}  (\tau*|x|)(d)^2  \sum_{k \le u}(k+z^2 \Lo)  
   \frac{(\tau*|x|)(k)^2}{k} \\
      & \ll (\tau*|x|)(d)^2 T^{\epsilon}(u+x^2) \ ,  \\
  \mathcal{A}(G(s,\psi)^4) 
  &  \ll \sum_{k \le v^2} (k+ z^2 \Lo) \frac{\tau (k)^2}{k}
  \ll T^{\epsilon} (v^2 +z^2)  \ ,  \\
    \mathcal{A}(\mathcal{B}(s)^4)
    & \ll  x(d)^4 \sum_{k \le M^2} (k+z^2 \Lo) \frac{(|x|*|x|)^2(k)}{k}
    \le  T^{\epsilon} x(d)^4 (M^2+z^2)  \ .   
\end{split}
\end{equation}
By~(\ref{eq:Ix}),~(\ref{eq:ALAG}),~(\ref{eq:Abds}) and the bound $(a+b)^{1/n} \ll a^{1/n}+b^{1/n}$ for $a,b >0$, $n \in \mathbb{N}$ we have
\begin{equation}
\begin{split}
  \mathcal{I}(x) & \ll (\tau*|x|)(d)z(u^{1/2}+z) T^{\epsilon} \\
  & + (\tau*|x|)(d) z^{1/2}(u^{1/2}+z) (v^{1/2}+z^{1/2}) T^{\epsilon} \\
  & + j(d) \tau_3(d)| x(d)| z
  (v^{1/2}+z^{1/2})(M^{1/2}+z^{1/2}) T^{\epsilon} \ . 
  \nonumber
\end{split}
\end{equation}
Recalling that $u=z^2$ and $v=T^{1/4}$ this simplifies to 
\[
   \mathcal{I}(z) \ll
   j(d)\tau_3(d)|x(d)| T^{\epsilon}
   \left(
   z^2 +z^{3/2}T^{1/4}+z^{3/2}M^{1/2}+ zT^{1/4}M^{1/2} 
   \right) \  . 
\]
Since $M=T^{\theta}$
\begin{equation}
\begin{split}
   \int_{\eta}^{\frac{M}{k}} z^{-1} d\mathcal{I}(z)
   & \ll 
   j(d)\tau_3(d)|x(d)| T^{\epsilon}
   \left(
   \mbox{$\frac{M}{k}$} + ( \mbox{$\frac{M}{k}$})^{1/2}T^{1/4}
   + (\mbox{$\frac{M}{k}$})^{1/2}M^{1/2}+ T^{1/4}M^{1/2}  
   \right)  \\
   & \ll  j(d) \tau_3(d)|x(d)|
   \left(
   T^{\theta+\epsilon} k^{-1/2} + T^{\frac{\theta}{2}+\frac{1}{4}+\epsilon}
   \right) \ . 
   \nonumber
\end{split}
\end{equation}

\noindent Nathan Ng\\
Department of Mathematics and Statistics \\
University of Ottawa \\
585 King Edward Ave. \\
Ottawa, ON \\
Canada K1N6N5 \\
\email{nng@uottawa.ca}

\end{document}